





\documentclass[sn-mathphys]{sn-jnl}

\usepackage{amsthm}

\usepackage{comment}
\usepackage{mathrsfs}


\jyear{2023}%

\theoremstyle{thmstyleone}%
\newtheorem{theorem}{Theorem}[section]
\newtheorem{proposition}[theorem]{Proposition}%
\newtheorem{lemma}[theorem]{Lemma}%
\newtheorem{corollary}[theorem]{Corollary}%
\newtheorem{remark}{Remark}%
\theoremstyle{thmstylethree}%

\theoremstyle{thmstyletwo}%



\raggedbottom

\usepackage{anyfontsize}

\newcommand{\Complex}{ \mathbb{C} }

\newcommand{\Integer}{ \mathbb{Z} }

\newcommand{\Real}{ \mathbb{R} }
\newcommand{\RealPart}{ \mathrm{Re} }
\newcommand{\Set}[2]{\left \{#1 \, ; \, #2 \right \}}

\newcommand{\Torus}{\mathbb{T}}


\begin{document}

\title{Data Assimilation to the Primitive Equations in $H^2$}


\author*[1,2]{\fnm{} \sur{Ken Furukawa} (furukawa@sci.u-toyama.ac.jp)}



\affil*[1]{\orgdiv{Faculty of Science, Academic Assembly, University of Toyama, \orgaddress{\street{Gofuku 3190}, \city{Toyama}, \postcode{930-0887}, \state{Toyama}, \country{JAPAN}}}}




\abstract{In this paper we prove that the solution to the primitive equations is predicted by the corresponding data assimilation(DA) equations in $H^2$.
Although, the DA equation does not include the direct information about the base solution and its initial conditions, the solution to the DA equation exponentially convergence to the base(original) solution when the external forces are known even before they are observed.
Additionally, when the external force is not completely known but its spatially dense observations are available, then the DA is stable, $i.e.$ the DA solution lies in a sufficiently small neighborhood of the base solution.
}

\keywords{Data assimilation, the nudging method, the primitive equations, Maximal H\"{o}lder regularity, Asymptotic analysis.}



\maketitle

\bmhead{Acknowledgments}
The author was partly supported by RIKEN Pioneering Project “Prediction for Science” and JSPS Grant-in-Aid for Young Scientists (No. 22K13948).

\section{Introduction} \label{intro}
We consider the data assimilation problem for the primitive equations.
The equations are given by
\begin{equation} \label{eq_primitive}
    \begin{alignedat}{3}
        \partial_t v - \Delta v + u \cdot \nabla v + \nabla_H \pi
        & = f
        & \quad \text{in} \quad
        & \Omega \times (0, \infty), \\
        \partial_3 \pi
        & = 0
        & \quad \text{in} \quad
        & \Omega \times (0, \infty), \\
        \mathrm{div} \, u
        & =0
        & \quad \text{in} \quad
        & \Omega \times (0, \infty), \\
        v(0)
        & =v_0
        & \quad \text{in} \quad
        & \Omega,
    \end{alignedat}
\end{equation}
where $T>0$, $u = (v, w) \in \Real^2 \times \Real$ is an unknown velocity field with initial data $v_0$, $\pi$ are unknown the pressure field.
The domain $\Omega = \Torus^2 \times (-l, 0)$ is the periodic layer.
The differential operators $\nabla_H = (\partial_1, \partial_2)^T$, $\mathrm{div}_H = \nabla_H \cdot$, and $\Delta_H = \nabla_H \cdot \nabla_H$ are the horizontal gradient, the horizontal divergence, and the horizontal Laplacian, respectively.
The vertical velocity $w$ is given by the formula
\begin{equation*}
    w (x^\prime, x_3, t)
    = - \int_{-l}^{x_3}
        \mathrm{div}_H \, v (x^\prime, z, t)
    dz.
\end{equation*}
For such $w$ the divergence-free condition
\begin{align*}
    \mathrm{div}_H v
    + \partial_3 w
    = 0
\end{align*}
is satisfied.
The boundaries $\Gamma_l$, $\Gamma_b$, and $\Gamma_u$ are correspond to lateral, bottom, upper boundaries ones.
We impose the periodicity for $u, \pi$ on the lateral boundaries and
\begin{align}\label{eq_bound_conditions}
    \begin{split}
        & \partial_3 v = 0,
        \quad w = 0
        \quad \textrm{on}
        \quad \Gamma_u,\\
        & v = 0,
        \quad w = 0
        \quad \textrm{on}
        \quad \Gamma_b.
    \end{split}
\end{align}
The primitive equations are a fundamental model of geophysical flows and have used in meteorology due to its low computational cost for simulations.
Data assimilation (DA) has been developed alongside the field of weather forecasting.
Since the spatial domain for meteorological forecasting is a stratified region where the vertical thickness is extremely small compared to the horizontal scale, it is reasonable to use primitive equations instead of the three-dimensional Navier-Stokes equations.

One of the earliest studies was conducted by Lions, Temam, and Wang \cite{LionsTemamWangShou1992}, where they considered the primitive equations in a shell domain.
The local well-posedness was proved by Guill\'{e}n-Gonz\'{a}lez, Masmoudi, and Rodr\'{i}guez-Bellido \cite{GuillenMasmoudiRodriguez2001}.
Cao and Titi \cite{CaoTiti2007} established the global well-posedness of the primitive equations in $H^1$.
Kukavica and Ziane \cite{KukavicaZiane2007} proved the existence of the global attractor in $L^2$-framework.
Hieber and Kashiwabara \cite{HieberKashiwabara2016} have extended the well-posedness for $L^p$-settings based on the theory of analytic semi-group.
To the author's knowledge, the most generated result is established by Giga, Gries, Hieber, Hussein, Kashiwabara \cite{GigaGriesHieberHusseinKashiwabara2020_analiticity}.
To consider the primitive equations in the framework of functional analysis, it is useful to use the equivalent equations such that
\begin{equation} \label{eq_primitive_evo}
    \begin{alignedat}{3}
        \partial_t v - P \Delta v + P (u \cdot \nabla v)
        &= P f
        &\quad \textrm{in} \quad
        &\Omega \times (0, \infty), \\
        \mathrm{div}_H \, \overline{v}
        &= 0
        &\quad \textrm{in} \quad
        &\Omega \times (0, \infty), \\
        v(0)
        &= v_0
        &\quad \textrm{in} \quad
        &\Omega,
    \end{alignedat}
\end{equation}
where $P: L^q(\Omega) \rightarrow L^q_{\overline{\sigma}}(\Omega)$ ($q \in (1,\infty)$) is the hydrostatic Helmholtz projection given by $P \varphi = \varphi + \nabla_H (- \Delta_H)^{-1} \mathrm{div}_H \overline{{\varphi}}$ and $\overline{\varphi} = \int_{-l}^0 \varphi(\cdot, \cdot, z) dz/l$ is the horizontal average.
We write $C^\infty(\Omega)$ to denote the set of horizontally periodic $C^\infty$-functions for the horizontal variable $x^\prime$.
The Lebesgue space $L^q(\Omega)$ and the Sobolev space $H^{s, q}(\Omega)$ are defined as the completion of $C^\infty(\Omega)$ by the standard $L^q$- and $H^{s, q}$-norms for $s \geq 0$ and $1 < q < \infty$.
We denote the $\overline{\mathrm{div}_H}$-free $L^q$-vector fields $L^q_{\overline{\sigma}}(\Omega)$ by
\begin{equation*}
    L^q_{\overline{\sigma}}(\Omega) 
    = \overline{
        \Set{
            \varphi \in C^\infty(\Omega)
        }{
            \mathrm{div}_H \overline{\varphi} = 0, \,
            \text{$\varphi$ satisfies (\ref{eq_bound_conditions})}
        }
    }^{\Vert \cdot \Vert_{L^q(\Omega)}}.
\end{equation*}
We analogously denote $\overline{\mathrm{div}_H}$-free Sobolev spaces by $H^{s, q}_{\overline{\sigma}}(\Omega)$.
Note that $P: L^q(\Omega) \rightarrow L^q_{\overline{\sigma}}(\Omega)$ is bounded.

We consider the DA equations associated with $v$ such as
\begin{equation} \label{eq_nudging}
    \begin{alignedat}{3}
        \partial_t \tilde{v} - P \Delta \tilde{v} + P \left(
            \tilde{u} \cdot \nabla \tilde{v}
        \right)
        &= g + \mu P (J_\delta v - J_\delta \tilde{v})
        & \quad \textrm{in} \quad
        &\Omega \times (0, \infty),\\
        \mathrm{div}_H \, \overline{\tilde{v}}
        &= 0
        & \quad \textrm{in} \quad
        &\Omega \times (0, \infty),\\
        \tilde{v}(0)
        &= \tilde{v}_0
        & \quad \textrm{in} \quad
        &\Omega,
    \end{alignedat}
\end{equation}
where $g$ is an external force, $\mu >0$ is a constant called an inflation parameter, $\delta >0$ is a constant representing the inverse of the observation density, and the bounded linear time-independent observation operator $J_\delta$ satisfies
\begin{align} \label{eq_J}
    \begin{split}
        \Vert
            J_\delta f
        \Vert_{L^q (\Omega)}
        & \leq C
        \Vert
            f
        \Vert_{L^q (\Omega)} \quad \textrm{for} \quad f \in L^q (\Omega), \\
        \Vert
            J_\delta f - f
        \Vert_{L^q (\Omega)}
        & \leq C \delta
        \Vert
            \nabla f
        \Vert_{L^q (\Omega)} \quad \textrm{for} \quad f \in H^{1, q} (\Omega),
    \end{split}
\end{align}
for some constant $C>0$.
The DA procedures with (\ref{eq_J}) are called the (continuous) nudging.
In this paper we consider the two cases
\begin{align*}
    g = f, J_\delta f.
\end{align*}
These cases correspond to when $f$ is perfectly detected and when $f$ is observed.
The DA can be considered as control problem to converge $\tilde{v}$ to $v$ as $t \rightarrow \infty$
The term $J_\delta v - J_\delta \tilde{v}$ in (\ref{eq_nudging}) behaves as a forcing term to make $\tilde{v}$ converge to $v$.
We remark that the information of $v$ itself is not used directly in the DA equations and the initial data is not used because we never obtain the perfect state of $v$ in the real world.

Azouani, Olson, and Titi \cite{AzouaniOlsonTiti2014} established a mathematical framework to consider the DA, and they showed the solution to the DA equations of the two-dimensional Navier-Stokes equation converges to the true solution in the energy space.
Albanez, Nussenzveig, and Titi \cite{AlbanezNussenzveigTiti2015} showed the same type of convergence result for the Navier-Stokes $\alpha$-model.
Pei \cite{Pei2019} showed the convergence of the DA equations to the primitive equations in the energy space.
The author \cite{Furukawa2024} proved the convergence in $L^p$-$L^q$- maximal regularity settings assuming fast decay of the base solution.
The fast decay is used to prove convergence in the Sobolev space and not used in convergence in $L^2$.
The just $L^2$-convergence at $t \rightarrow \infty$ implies the convergence in the same space as the observation $J_\delta v$.
We can know $v$ in $L^2$ if we have the accuracy of the observations is increased.
On the other hand, convergence in $H^m$ cannot be proven without using the equations, even if if we have a dense observation, which is one of the advantages of DA.
In this paper we prove the $H^2$ convergence without assuming decay of basic solution $v$.

We assume that $\mu \geq 1$ and $\delta \leq 1$ and
\begin{align}\label{eq_assumptions_for_v_mu_delta_for_theorem}
    \begin{split}
        & C_0 \sum_{m=1,2,4}
        \Vert
            v
        \Vert_{L^\infty(0, \infty; H^2(\Omega))}^m
        \leq \mu,\\
        & C_1 \mu \delta
        \leq 1,
    \end{split}
\end{align}
for some constant $C_j>0$ for $j=0,1$, which are just a some combinations of embedding constants and not depend onf $\mu$, $\delta$, and $v$.
Compare these bounds with (\ref{eq_assumptions_for_v_and_mu_for_energy_estimate}), (\ref{eq_condition_mu_and_delta_and_norm_of_v_1}) and (\ref{eq_condition_mu_and_delta_and_norm_of_v_2}).
We call these assumptions as (A).
The main result of this paper reads as follows.
\begin{theorem} \label{thm_main_thoerem1}
    Assume (A) and $g_\delta = f$.
    Let $v_0, V_0 \in H^1_{\overline{\sigma}}(\Omega)$ and $0 < \theta < \alpha$.
    Let $f$ satisfy
    \begin{align}\label{eq_assumptions_for_f_in_theorem}
        \begin{split}
            f
            & \in BC([0 , \infty); H^1(\Omega)^2)
            \cap C^\alpha((0 , \infty); H^1(\Omega)^2),\\
            & \partial_t f
            \in L^2_{loc}(0,T; L^2(\Omega)^2).
        \end{split}
    \end{align}
    Assume that the vector field $v$ is the solution to (\ref{eq_primitive}) obtained by \cite{CaoTiti2007} satisfying
    \begin{gather} \label{eq_bounds_for_v}
        \begin{split}
            & v
            \in BC^\theta (0, \infty; H^2_{\sigma}(\Omega)),\\
            & 
            \partial_t v 
            \in C^\theta (0, \infty; L^2_{\overline{\sigma}}(\Omega)),\\
        \end{split}
    \end{gather}
    Then there exists $\tilde{\mu} \geq \mu/4$, $T>0$ and a unique solution
    \begin{align*}
        \tilde{v}
        \in C^1(0, \infty; L^2(\Omega)^2)
        \cap C(0, \infty; H^2(\Omega))^2
    \end{align*}
    to (\ref{eq_nudging}) such that
    \begin{align} \label{eq_decay_estimate_in_main_theorem}
        \begin{split}
            \Vert
                \partial_t (v(t) - \tilde{v}(t))
            \Vert_{L^2(\Omega)^2}
            + \Vert
                v(t) - \tilde{v}(t)
            \Vert_{H^2_{\overline{\sigma}}(\Omega)}
            \leq C e^{ - \tilde{\mu} t}.
        \end{split}
    \end{align}
    for all $t > T$.
\end{theorem}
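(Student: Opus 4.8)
The plan is to analyze the error $V := v - \tilde v$ and to show that it decays at the rate claimed in (\ref{eq_decay_estimate_in_main_theorem}). Let $\tilde u = (\tilde v, \tilde w)$, where $\tilde w$ is obtained from $\tilde v$ by the same vertical formula that defines $w$ from $v$, and set $U := u - \tilde u = (V, W)$. Subtracting (\ref{eq_nudging}) with $g = f$ from (\ref{eq_primitive_evo}) and using the identity $P(u\cdot\nabla v) - P(\tilde u\cdot\nabla\tilde v) = P(u\cdot\nabla V) + P(U\cdot\nabla\tilde v)$ gives
\[
    \partial_t V - P\Delta V + \mu P J_\delta V = - P(u\cdot\nabla V) - P(U\cdot\nabla\tilde v), \qquad V(0) = v_0 - \tilde v_0 .
\]
The algebraic splitting $\mu P J_\delta V = \mu P V - \mu P(I - J_\delta)V$ is essential: the term $\mu P V$ supplies a coercive damping of size $\mu$, while $\mu P(I - J_\delta)V$ is, by the second line of (\ref{eq_J}), controlled by $C\mu\delta\Vert\nabla V\Vert_{L^2(\Omega)}$ and hence absorbed into the dissipation once $C_1\mu\delta\le 1$ as in (A).

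\textbf{Step 1 (local solvability of the DA system).} The hydrostatic Stokes operator $-P\Delta$ generates a bounded analytic $C_0$-semigroup on $L^q_{\overline{\sigma}}(\Omega)$ and enjoys maximal H\"older regularity by \cite{HieberKashiwabara2016, GigaGriesHieberHusseinKashiwabara2020_analiticity}; adding the bounded operator $\mu P J_\delta$ preserves these properties. Treating $P(\tilde u\cdot\nabla\tilde v)$ as a lower order term that is locally Lipschitz on the maximal regularity class and using the H\"older regularity of $f$ from (\ref{eq_assumptions_for_f_in_theorem}), a standard contraction argument yields, for each $\tilde v_0 \in H^1_{\overline{\sigma}}(\Omega)$, a unique solution $\tilde v \in C^1((0,T_*);L^2(\Omega)^2) \cap C((0,T_*);H^2(\Omega)^2)$ on a maximal interval $[0,T_*)$; the open interval reflects the parabolic smoothing from $H^1$ to $H^2$, and the local theory comes with a blow-up criterion expressed through $\Vert\tilde v\Vert_{H^2(\Omega)}$.

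\textbf{Step 2 (a priori estimates).} This is the core of the argument. On $[0,T_*)$ I would run three coupled estimates for $V$. (i) Testing the error equation with $V$ gives, schematically, $\tfrac12\tfrac{d}{dt}\Vert V\Vert_{L^2}^2 + \Vert\nabla V\Vert_{L^2}^2 + \mu\Vert V\Vert_{L^2}^2 \le |(P(u\cdot\nabla V),V)_{L^2}| + |(P(U\cdot\nabla\tilde v),V)_{L^2}| + C\mu\delta\Vert\nabla V\Vert_{L^2}\Vert V\Vert_{L^2}$, where the nonlinear terms are handled by the anisotropic Ladyzhenskaya-- and Agmon-type inequalities available for the primitive equations, the loss of one horizontal derivative in $W = -\int_{-l}^{x_3}\mathrm{div}_H\, V\, dz$ being compensated by integration by parts in $x_3$. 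Under (A) the damping $\mu$ dominates the coefficient $C\Vert v\Vert_{L^\infty(0,\infty;H^2)}^2 + C$, yielding $\Vert V(t)\Vert_{L^2} \le \Vert V(0)\Vert_{L^2}e^{-\tilde\mu t}$ with $\tilde\mu$ a fixed fraction of $\mu$, at least $\mu/4$. (ii) An $H^1$ estimate, testing with $-\Delta_H V$ and with $-\partial_3^2 V$ separately so as to respect (\ref{eq_bound_conditions}), propagates the $H^1$ smallness, again absorbing the nonlinearity and the $\mu\delta$ term by (A). (iii) Differentiating the equation in time and testing with $\partial_t V$, using $\partial_t v \in C^\theta(0,\infty;L^2_{\overline{\sigma}}(\Omega))$ from (\ref{eq_bounds_for_v}) and $\partial_t f \in L^2_{loc}$ from (\ref{eq_assumptions_for_f_in_theorem}), gives an $L^2$ bound for $\partial_t V$; then $P\Delta V = \partial_t V + \mu P J_\delta V + P(u\cdot\nabla V) + P(U\cdot\nabla\tilde v)$ together with maximal regularity of $-P\Delta$ converts this into an $H^2$ bound for $V$. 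Assembling (i)--(iii) into a single differential inequality for $N(t):=\Vert\partial_t V(t)\Vert_{L^2} + \Vert V(t)\Vert_{H^2}$, and using (A) once more so that the damping stays in charge of the coefficients $\sum_{m=1,2,4}\Vert v\Vert_{L^\infty(0,\infty;H^2)}^m$, one obtains $N(t)\le Ce^{-\tilde\mu t}$ for $t$ beyond a fixed time $T>0$, namely once the $H^1$-to-$H^2$ smoothing has taken effect and the initial transient is dominated by the already-decaying lower order norms.

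\textbf{Step 3 (globalization, conclusion, and the main difficulty).} The uniform bound from Step 2 is incompatible with the blow-up criterion of Step 1 unless $T_* = \infty$, so the DA solution is global with the regularity stated in the theorem; the bound on $N(t)$ for $t>T$ is then precisely (\ref{eq_decay_estimate_in_main_theorem}) with $\tilde\mu\ge\mu/4$, and uniqueness is inherited from the contraction of Step 1 (or, alternatively, from estimate (i) applied to a difference of two DA solutions). The base solution $v$ exists globally with the required regularity by \cite{CaoTiti2007} and the parabolic regularity theory quoted above, so every quantity is well defined. The step I expect to be the main obstacle is Step 2(iii): simultaneously closing the $H^2$ and time-derivative estimates against the primitive-equations nonlinearity $P(U\cdot\nabla\tilde v)$, which couples the full regularity of $\tilde v$ to the derivative-losing vertical velocity $W$, while keeping careful track that the large parameter $\mu$ multiplies the small perturbation $(I - J_\delta)V$. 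The bookkeeping of which powers $\Vert v\Vert_{L^\infty(0,\infty;H^2)}^m$ with $m=1,2,4$ arise --- and hence the exact form of the assumption (A) --- is delicate, and one must ensure that the exponential rate is not degraded below $\mu/4$.
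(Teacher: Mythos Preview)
Your strategy and the paper's diverge at a structural level. The paper does \emph{not} close $H^2$-decay by direct energy estimates. Instead it absorbs the two cross terms that are linear in $V$ into a time-dependent generator
\[
    \mathcal{A}_{\mu,\delta}(t)\varphi' \;=\; -P\Delta\varphi' + P(u(t)\cdot\nabla\varphi') + P(\varphi\cdot\nabla v(t)) + \mu P J_\delta\varphi',
\]
leaving only the genuinely quadratic term $P(U\cdot\nabla V)$ as the nonlinearity. The hypothesis $v\in BC^\theta(0,\infty;H^2)$ is used exactly here: it yields the Tanabe-type H\"older bound $\Vert(\mathcal{A}_{\mu,\delta}(t)-\mathcal{A}_{\mu,\delta}(s))\mathcal{A}_{\mu,\delta}(0)^{-1}\Vert\le C|t-s|^\theta$, which is what permits construction of an evolution operator $T_{\mu,\delta}(t,s)$ with the decay $\Vert\mathcal{A}_{\mu,\delta}(t)^\phi T_{\mu,\delta}(t,s)\Vert\le C(t-s)^{-\phi}e^{-\mu_\ast(t-s)/2}$. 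The paper then proves a maximal H\"older regularity estimate for $T_{\mu,\delta}$ in exponentially weighted spaces $\mathcal{F}^{\mu_\ast/2,\sigma,1/2}$, uses the $L^2$ energy estimate only to locate a time $t_n$ at which $\Vert V(t_n)\Vert_{H^1}$ is small, and from that small data runs a Fujita--Kato contraction in $\mathcal{F}^{\mu_\ast/2,\sigma,1/2}$ for the quadratic nonlinearity alone. The $H^2$ and $\partial_t$ decay are read off from membership in that weighted space; no $H^2$- or $\partial_t$-level energy inequality is ever written down.

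Your route is conceptually simpler, but Step~2(iii) is not closed as stated and this is where the real obstruction sits. After time-differentiating, the term $P(U\cdot\nabla \partial_t v)$ (or equivalently $P(U\cdot\nabla\partial_t\tilde v)$ in your splitting) must be paired with $\partial_t V$; the hypotheses give only $\partial_t v\in C^\theta(0,\infty;L^2)$, so $\nabla\partial_t v$ is unavailable and, after integrating by parts, you are forced into products that feed $\Vert V\Vert_{H^2}$ back into the right-hand side with a coefficient of order $\Vert\partial_t v\Vert_{L^2}^2$ rather than $\Vert v\Vert_{H^2}^m$. Whether the resulting coupled differential inequality for $(\Vert\partial_t V\Vert_{L^2},\Vert V\Vert_{H^2})$ can be closed with damping rate at least $\mu/4$ under assumption~(A) alone is unclear, and your sketch does not show it. A second symptom that something is missing: the H\"older exponent $\theta$ in $v\in BC^\theta(0,\infty;H^2)$ plays no role anywhere in your argument, whereas in the paper it is precisely the input to the Tanabe construction that makes the whole $H^2$ machinery run. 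If you pursue the energy route, you should either explain concretely how Step~2(iii) closes with only $\partial_t v\in L^2$, or acknowledge that you need stronger time-regularity of $v$ than the theorem assumes.
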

\begin{theorem} \label{thm_main_thoerem2}
    Assume (A) and $g_\delta = J_\delta f$.
    Under the same assumptions for $v_0, V_0, f$,
    Then there exists $\tilde{\mu} \geq \mu/4$, $T>0$, and a unique solution
    \begin{align*}
        \tilde{v}
        \in C^1(0, \infty; L^2(\Omega)^2)
        \cap C(0, \infty; H^2(\Omega))^2
    \end{align*}
    to (\ref{eq_nudging}) such that
    \begin{align} \label{eq_decay_estimate_in_main_theorem2}
        \begin{split}
            & \Vert
                \partial_t (v(t) - \tilde{v}(t))
            \Vert_{L^2(\Omega)^2}
            + \Vert
                v(t) - \tilde{v}(t)
            \Vert_{H^2_{\overline{\sigma}}(\Omega)}\\
            & \leq C \delta \limsup_{t \rightarrow \infty} \Vert
                \nabla f(t)
            \Vert_{L^2(\Omega)^2}.
        \end{split}
    \end{align}
    for $t > T$.
\end{theorem}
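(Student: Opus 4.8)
The plan is to run the argument behind Theorem~\ref{thm_main_thoerem1} essentially unchanged, isolating the one new feature created by the choice $g_\delta = J_\delta f$: a persistent (non-decaying) source of size $O(\delta)$. Since $J_\delta$ is a bounded, time-independent operator on $L^2(\Omega)$, the datum $J_\delta f$ inherits from (\ref{eq_assumptions_for_f_in_theorem}) the regularity $J_\delta f \in BC([0,\infty);H^1(\Omega)^2)\cap C^\alpha((0,\infty);H^1(\Omega)^2)$ together with $\partial_t(J_\delta f)=J_\delta\partial_t f\in L^2_{loc}(0,T;L^2(\Omega)^2)$; hence the construction of the unique solution $\tilde v\in C^1(0,\infty;L^2(\Omega)^2)\cap C(0,\infty;H^2(\Omega))^2$ of (\ref{eq_nudging}) carries over verbatim from Theorem~\ref{thm_main_thoerem1}, the forcing $f$ being replaced throughout by $J_\delta f$. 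Setting $V=v-\tilde v$, $U=u-\tilde u$ and subtracting (\ref{eq_nudging}) from (\ref{eq_primitive_evo}) gives
\begin{equation*}
    \partial_t V - P\Delta V + P(u\cdot\nabla V) + P(U\cdot\nabla\tilde v) + \mu P J_\delta V = e, \qquad e := P(f-J_\delta f),
\end{equation*}
which is exactly the error equation analysed for Theorem~\ref{thm_main_thoerem1}, now with right-hand side $e$ in place of $0$; by the second inequality of (\ref{eq_J}), $\Vert e(t)\Vert_{L^2(\Omega)^2}\le C\delta\Vert\nabla f(t)\Vert_{L^2(\Omega)^2}$ for every $t>0$.

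Next I would reproduce the energy hierarchy used for Theorem~\ref{thm_main_thoerem1}. Testing the error equation against $V$ and then against $-\Delta V$, and splitting the nudging term as $\mu J_\delta V = \mu V - \mu(V-J_\delta V)$, the first piece produces the coercive contributions $\mu\Vert V\Vert_{L^2}^2$ and $\mu\Vert\nabla V\Vert_{L^2}^2$, while $\mu\Vert V-J_\delta V\Vert_{L^2}\le C\mu\delta\Vert\nabla V\Vert_{L^2}$ is absorbed into the dissipation by the bound $C_1\mu\delta\le1$ in (\ref{eq_assumptions_for_v_mu_delta_for_theorem}); the quadratic terms $P(u\cdot\nabla V)+P(U\cdot\nabla\tilde v)$ are estimated, using the uniform bound (\ref{eq_bounds_for_v}) on $v$ in $H^2$ and the usual Ladyzhenskaya and Agmon inequalities, by $C_0\sum_{m=1,2,4}\Vert v\Vert_{L^\infty(0,\infty;H^2)}^m$ times lower-order norms, and are absorbed by the first line of (\ref{eq_assumptions_for_v_mu_delta_for_theorem}). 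The only genuinely new terms are $\langle e,V\rangle$ and $\langle e,-\Delta V\rangle$, which Young's inequality converts into half of a coercive term plus $C\Vert e\Vert_{L^2}^2$. Collecting everything, with the effective rate $\tilde\mu\ge\mu/4$ of Theorem~\ref{thm_main_thoerem1}, one obtains for $t>T$ a differential inequality
\begin{equation*}
    \frac{d}{dt}\mathcal E(t) + c\tilde\mu\,\mathcal E(t) \leq \frac{C}{\tilde\mu}\Vert e(t)\Vert_{L^2(\Omega)^2}^2, \qquad \mathcal E(t)\simeq\Vert V(t)\Vert_{H^1(\Omega)^2}^2,
\end{equation*}
and Grönwall's lemma together with $\tilde\mu\ge1/4$ yields $\limsup_{t\to\infty}\Vert V(t)\Vert_{H^1(\Omega)^2}\le C\delta\limsup_{t\to\infty}\Vert\nabla f(t)\Vert_{L^2(\Omega)^2}$.

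Finally I would lift this $H^1$ bound to the $H^2$ and $\partial_t$ bounds of (\ref{eq_decay_estimate_in_main_theorem2}), exactly along the lines of Theorem~\ref{thm_main_thoerem1}: apply the maximal regularity estimate for the hydrostatic Stokes operator on the unit intervals $[t,t+1]$, $t>T$, with inhomogeneity $-P(u\cdot\nabla V)-P(U\cdot\nabla\tilde v)-\mu PJ_\delta V+e$ estimated in $L^p(t,t+1;L^2(\Omega)^2)$ using the smallness of $\Vert V\Vert_{H^1}$ on $[t,t+1]$ just obtained and $\Vert e\Vert_{L^p(t,t+1;L^2)}\le C\delta\sup_{[t,t+1]}\Vert\nabla f\Vert_{L^2}$; this controls $\Vert V\Vert_{L^p(t,t+1;H^2)}$, and a trace/interpolation inequality then bounds $\Vert V(t)\Vert_{H^2(\Omega)^2}$ by the same right-hand side, after which $\Vert\partial_t V(t)\Vert_{L^2}\le\Vert\Delta V(t)\Vert_{L^2}+\Vert P(u\cdot\nabla V)(t)\Vert_{L^2}+\Vert P(U\cdot\nabla\tilde v)(t)\Vert_{L^2}+\mu\Vert J_\delta V(t)\Vert_{L^2}+\Vert e(t)\Vert_{L^2}$ follows from the error equation itself. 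I expect the only real obstacle to be bookkeeping: organising the $H^2$ step so that $e$ ever enters only through $\Vert e\Vert_{L^2(\Omega)}$ (never through $\Vert\nabla e\Vert_{L^2}$, which is unavailable because $f$ is assumed only in $H^1$ and $\partial_t f$ only in $L^2$), and tracking constants so that the radius of the limiting neighbourhood is $O(\delta\limsup_{t\to\infty}\Vert\nabla f\Vert_{L^2})$ uniformly in $\mu\ge1$ and $\delta\le1$; in the present scheme both are automatic, since once the quadratic terms have been absorbed every inequality in the hierarchy is linear in the forcing.
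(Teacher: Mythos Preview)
Your route departs substantially from the paper's. The paper never runs an $H^1$ energy estimate on $V$; the only energy computation is the $L^2$ one of Section~\ref{section_a_priori_estimate}, whose sole purpose is to produce a sequence of times $t_n$ with $\Vert\nabla V(t_n)\Vert_{L^2}$ small (see (\ref{eq_definition_of_t0})). From each such time the paper runs a Fujita--Kato fixed point in the H\"older space $\mathcal F^{\sigma,1/2}$, using the evolution operator $T_{\mu,\delta}(t,s)$ built in Section~\ref{section_perturbed_hydrostatic_stokes_operator} and the maximal H\"older regularity of Lemma~\ref{lem_maximal_regularity_estimate_in_F_beta_sigma}. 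Since $|t_{j+1}-t_j|<2$, the intervals $(t_j,t_j+2)$ overlap and tile $(t_0,\infty)$, and the fixed-point radius (\ref{eq_estimate_for_M_mu_delta_in_F_sigma_half}) directly yields the $H^2$ bound of size $O(\delta)$.

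Your outline has two genuine gaps. First, the differential inequality you write for $\mathcal E\simeq\Vert V\Vert_{H^1}^2$ can only hold after the quadratic term $P(U\cdot\nabla V)$---hidden in your decomposition via $P(U\cdot\nabla\tilde v)=P(U\cdot\nabla v)-P(U\cdot\nabla V)$---has been absorbed into the dissipation, and absorbing it costs a factor $\Vert V\Vert_{H^1}$ in front of $\Vert AV\Vert^2$; you never close this bootstrap, and the bound (\ref{eq_bounds_for_v}) you cite concerns $v$, not $\tilde v$. Second, in your lift to $H^2$ you put $\mu PJ_\delta V$ into the inhomogeneity for the \emph{bare} hydrostatic Stokes operator. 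Its $L^2$ norm is $\mu\Vert V\Vert_{L^2}\le C\mu\delta\limsup\Vert\nabla f\Vert$, and under (A) one only has $\mu\delta\le C_1^{-1}$, so this contribution is $O(\limsup\Vert\nabla f\Vert)$ rather than $O(\delta\limsup\Vert\nabla f\Vert)$: the factor $\delta$ in (\ref{eq_decay_estimate_in_main_theorem2}) is lost. The paper's device of building $\mu PJ_\delta$ and the linear transport terms $P(u\cdot\nabla\,\cdot)+P(\,\cdot\,\cdot\nabla v)$ into $\mathcal A_{\mu,\delta}(t)$, and then running the contraction for $T_{\mu,\delta}$, is precisely what sidesteps both problems: the damping of size $\mu_\ast$ is carried by the linear evolution, only the genuinely quadratic piece $N(V,V)$ remains on the right, and $\mu\Vert V\Vert_{L^2}$ never appears as a forcing.
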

\begin{remark} \label{rem_for_main_theorem}
    \begin{itemize}
        \item Such basic solution $v$ exists, $c.f.$ Proposition 5.8 in \cite{HieberKashiwabara2016} for $\theta \in (0, 1/4)$.
        
        \item The existence of the solution to (\ref{eq_diff}) in $BC(0, T; H^1(\Omega))$ for all $T \in (0,\infty)$ with initial data $V_0 \in H^1(\Omega)$ and satisfying (\ref{eq_assumptions_for_f_in_theorem}) has been proved by the author in \cite{Furukawa2024} by a priori estimates.
        
        \item The rate $\tilde{\mu}$ is deduced from (\ref{eq_lower_bound_for_B_t_varphi_varphi}) and an estimate for the linear evolution operator Lemma \ref{lem_maximal_regularity_estimate_in_F_nu_beta_sigma}.
        
        \item Theorem \ref{thm_main_thoerem1} is stronger than Pei's result because the topology involved is stronger than $L^2$.
        Moreover, the assumptions of Theorem \ref{thm_main_thoerem1} are weaker than those in \cite{Furukawa2024}.

        \item The $H^2$-norm of $v$ appears in assumption (A).
        This norm can be dominated by polynomials of $C^\alpha (0, \infty; L^2(\Omega)^2)$-norm for $f$.
    \end{itemize}
\end{remark}
In the proof, we employ the theory of maximal regularity in temporal H\"{o}lder spaces for the equations of the difference
\begin{align} \label{eq_diff}
    \begin{split}
        &\partial_t V - P \Delta V
        - P(U \cdot \nabla V) \\
        & \quad\quad\quad
        + P(u \cdot \nabla V)
        + P(U \cdot \nabla v)
        = P F_\delta - \mu P J_\delta V
        \quad \textrm{in}
        \quad \Omega \times (0, \infty), \\
        & \mathrm{div}_H \, \overline{V}
        = 0
        \quad \textrm{in}
        \quad \Omega \times (0, \infty),\\
        & V(0)
        = V_0
        \quad \textrm{in}
        \quad \Omega,
    \end{split}
\end{align}
for $V = v - \tilde{v}$, $W = w - \tilde{w}$, $U = (V, W)$, $V_0 = v_0 - \tilde{v}_0$, and $F_\delta = f - g$.
The solution $\tilde{v}$ to (\ref{eq_nudging}) is given as $v - V$.
Note that $F_\delta = 0 \text{ or } f - J_\delta f$.
We generate an evolution operator $T_{\mu, \delta}(t, s)$ corresponding to the linearized operator
\begin{align*}
    \mathcal{A}_{\mu, \delta}(t) V
    = \mu P J_\delta V
    - P \Delta V
    + P(u(t) \cdot \nabla V)
    + P(U \cdot \nabla v(t)),
\end{align*}
associated the same domain as the hydrostatic Stokes operator $A = - P \Delta$ such that
\begin{align*}
    D(\mathcal{A}_{\mu, \delta}(t))
    = D(A)
    = \{ \varphi \in H^2(\Omega)^2 \cap L^2_{\overline{\sigma}}(\Omega)
        ;
        \text{$\varphi$ satisfies (\ref{eq_bound_conditions}).}
    \}.
\end{align*}
The domain is independent of $t$ and the perturbations in $\mathcal{A}_{\mu, \delta}(t)$ make sense because $v(t) \in H^2(\Omega)^2$.
The previous result \cite{Furukawa2024} is based on the linear theory for the perturbed hydrostatic Stokes operator $\mu P J_\delta + A$.
To estimate the cross terms $P(u \cdot \nabla V) + P(U \cdot \nabla v)$, the decay of $v$ was used.
Now the cross terms are included in $\mathcal{A}_{\mu, \delta}(t)$ and are dominated by $\mu I + A$.
Therefore we does not assume fast decay of $v$.

We have already established the global well-posedness of (\ref{eq_diff}) under the same assumption of Theorem \ref{thm_main_thoerem1} in \cite{Furukawa2024}.
We shall prove the decay $V(t) \rightarrow 0$.
Since $J_\delta = I + K_\delta$, the operator $\mathcal{A}_{\mu, \delta}(t)$ is decomposition into
\begin{align*}
    \mathcal{A}_{\mu, \delta}(t) V
    = (
        \mu P J_\delta
        - A
    )V
    + (
        \mu P K_\delta V
        + P(u(t) \cdot \nabla V)
        + P(U \cdot \nabla v(t))
    ).
\end{align*}
The second term in the right-hand side is dominated by the first term if $v$ is $\theta$-H\"{o}lder continuous for sufficiently large $\mu>0$ and small $\delta$.
To construct $T_{\mu, \delta}(t, s)$, the H\"{o}lder continuity is required.
We also observe the exponential decay of $T(t, s)$ as $e^{- \tilde{\mu} (t - s)}$ for some $\tilde{\mu} > 0$.

To show the decay and stability we first show an energy estimate.
The energy estimate implies that $V(t_0)$ can be small in $H^1$ for sufficiently large $t_0$.
We next construct the evolution operator $T_{\mu, \delta}(t,s)$ associated with $\mathcal{A}_{\mu, \delta}(t)$ by the Tanabe theory, the reader is referred to \cite{Tanabebook1979,Tanabe1960,Kato1953,KatoTanabe1962,Yagi2010}.
The operator $T_{\mu, \delta}(t,s)$ have exponential decay properties for large $\mu$ and small $\delta$ satisfying assumptions in Theorem \ref{thm_main_thoerem1}-\ref{thm_main_thoerem2}.
We also show $T_{\mu, \delta}(t,s)$ have the maximal H\"{o}lder regularity.
Combining with the formula
\begin{align*}
    V(t)
    = T_{\mu, \delta}(t, t_0) V(t_0)
    + \int_{t_0}^t
        T_{\mu, \delta}(t, s) P (
            U(s) \cdot \nabla V(s)
            + F_\delta(s)
        )
    ds, \quad
    t > t_0
\end{align*}
and the Fujita-Kato fixed point argument, we deduce that $V(t)$ with initial data $V(t_0)$ tends to zero exponentially as $t \rightarrow \infty$ if $F_\delta = 0$.
If $F_\delta = f - J_\delta f$, by the same argument, we obtain the stability.
Note that $v$ and $f$ are not smooth functions in general, by this reason we cannot see the converge and the stability in more smooth space than Theorem \ref{thm_main_thoerem1}-\ref{thm_main_thoerem2} without assuming more smoothness for vector fields $v$ and $f$ and boundedness in more regular spaces for the observation operator $J_\delta$.

In this paper we use the following notation and notion.
We denote by $L^q_x L^r_{x^\prime} (\Omega \times \Omega^\prime) = L^q(\Omega; L^r(\Omega^\prime))$ the mixed Lebesgue space for $1 \leq q, r \leq \infty$, domains $\Omega, \Omega^\prime$, and $x \in \Omega, x^\prime \in \Omega^\prime$.
We also use this kind of notation for mixed Sobolev and Sobolev-Lebesgue spaces.
We write
\begin{equation*}
    \dot{H}^{m, q}(\Omega)
    = \left \{
        \varphi \in L^1_{loc}(\Omega)
        \, ; \,
        \Vert \varphi \Vert_{\dot{H}^{m}}
        := \sum_{\vert \alpha \vert = m} \Vert \partial^\alpha_x \varphi \Vert_{L^q(\Omega)} < \infty
    \right \},
\end{equation*}
for $m \in \Integer_{\geq 0}$ and $q \in (1, \infty)$.
For a Banach space $X$ and $r>0$, we denote by $B_X(r)$ the ball in $X$ with radius $r$.
The denote by $\Sigma_{\theta, a}$ the sector
\begin{align*}
    \Sigma_{\theta, a}
    = \{ \lambda + a \in \Complex
        ;
        \vert
            \arg \lambda
        \vert
        \leq \theta
    \}
\end{align*}
for $\theta \in (0, \pi)$ and $a \in \Real$.
We denote $\Gamma_{\theta, a} = \partial \Sigma_{\theta, a}$
When $a = 0$, we simply denote by $\Sigma_{\theta}$ and $\Gamma_\theta$.

The paper is organized as follows.
In Section \ref{section_a_priori_estimate}, we establish some a priori estimates.
We find a sequence of large $t_j>0$ such that $V(t_j)$ is small in $H^1$
In Section \ref{section_perturbed_hydrostatic_stokes_operator}, we observe some properties of the perturbed hydrostatic operator $\mathcal{A}_{\mu, \delta}(t)$.
We will construct an evolution operator $T_{\mu, \delta}(t, s)$ associated with $\mathcal{A}_{\mu, \delta}(t)$ by the Tanabe theory.
In Section \ref{section_linear_evolution_operator}, we establish the maximal regularity results in a temporal H\"{o}lder-framework, $c.f.$ the book by Yagi \cite{Yagi2010}.
For an introduction to semigroup theory, which is also used in the theory of evolution operators, the reader is referred to \cite{EngelNagel2000,Sohr2001}.
In this section we also observe dependence of $\Vert v \Vert_{C^\theta(0, \infty; H^2(\Omega)^2)}$ for the index of the exponential function and some constants of inequalities.
In Section \ref{section_nonlinear_problems}, we apply the linear theory to the non-linear problems.
We construct the solution to (\ref{eq_estimate_for_M_mu_delta_in_F_sigma_half}) by Fujita-Kato fixed point arguments, at the same time we obtain the exponential decay and stability in $H^2$-framework.

\section{A priori estimate} \label{section_a_priori_estimate}
We first establish some a priori estimates, see also a priori estimates in Section 6 of \cite{HieberKashiwabara2016}.
Let $V_0 \in H^1_{\overline{\sigma}}(\Omega)$.
We assume that
\begin{align*}
    \sup_{t>0} \Vert
        v(t)
    \Vert_{H^2(\Omega)}
    & < \infty,\\
    \int_t^{t+R}
        \Vert
            \nabla^3 v(t)
        \Vert_{H^2(\Omega)}^2
    ds
    & < \infty,
\end{align*}
for some $R>0$.
For a divergence-free smooth vector field $\varphi$, it is known that
\begin{align*}
    \int_{\Omega}
        \varphi \cdot \nabla V \cdot V
    dx
    & = - \int_\Omega
       \sum_{j,k} \partial_j \varphi_j V_k V_k
       + \varphi_j V_k \partial_j V_k
    dx\\
    & = - \int_\Omega
        \varphi \cdot \nabla V \cdot V
    dx,
\end{align*}
therefore
\begin{align} \label{eq_divfree_cancelation}
    \int_{\Omega}
        (\varphi \cdot \nabla V) \cdot V
    dx
    = 0.
\end{align}

\begin{proposition} \label{prop_trigonal_estimate}
    Let $p \in (1, \infty)$ and $r_1, r_2 \in (1 - 1/p, \infty)$ satisfy $1 - 1/p \geq 1/r_1 + 1/r_2$.
    Then
    \begin{align*}
        &\left\vert
            \int_\Omega
                \left(
                    \int_{-l}^{x_3}
                        f
                    dz
                \right)
                g h
            dx
        \right\vert \\
        & \leq C \Vert
            f
        \Vert_{L^2(\Omega)}^{\frac{2}{p}}
        \Vert
            f
        \Vert_{H^1_{x^\prime}L^2_{x_3}(\Omega)}^{1 - \frac{2}{p}}
        \Vert
            g
        \Vert_{L^2(\Omega)}^{\frac{2}{r_1}}
        \Vert
            g
        \Vert_{H^1_{x^\prime}L^2_{x_3}(\Omega)}^{1 - \frac{2}{r_1}}
        \Vert
            h
        \Vert_{L^2(\Omega)}^{\frac{2}{r_2}}
        \Vert
            h
        \Vert_{H^1_{x^\prime}L^2_{x_3}(\Omega)}^{1 - \frac{2}{r_2}}
    \end{align*}
    for some constant $C>0$ and all $f, g, h \in H^1(\Omega)$.
\end{proposition}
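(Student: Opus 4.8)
The plan is to exploit the anisotropic, two‑plus‑one dimensional structure of the estimate: the inner vertical primitive is harmless once it is bounded in the vertical $L^\infty$‑norm, after which only a purely horizontal trilinear integral over $\Torus^2$ remains, and that integral is controlled by H\"older's inequality followed by the two‑dimensional Gagliardo--Nirenberg (Ladyzhenskaya‑type) inequality.

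First I would set $F(x', x_3) := \int_{-l}^{x_3} f(x', z)\, dz$ and note that for a.e. $x' \in \Torus^2$,
\[
    \sup_{-l < x_3 < 0} \bigl| F(x', x_3) \bigr|
    \le \int_{-l}^{0} |f(x', z)|\, dz
    \le l^{1/2}\, \Vert f(x', \cdot) \Vert_{L^2_{x_3}},
\]
by Cauchy--Schwarz in the vertical variable. Inserting this bound into the left‑hand side, applying Fubini, and using Cauchy--Schwarz in $x_3$ on the product $g(x', \cdot)\,h(x', \cdot)$, one reduces the claim to the two‑dimensional trilinear estimate
\[
    \left| \int_\Omega F g h \, dx \right|
    \le l^{1/2} \int_{\Torus^2} \phi(x')\, \psi(x')\, \chi(x')\, dx',
\]
where $\phi(x') := \Vert f(x', \cdot) \Vert_{L^2_{x_3}}$, $\psi(x') := \Vert g(x', \cdot) \Vert_{L^2_{x_3}}$ and $\chi(x') := \Vert h(x', \cdot) \Vert_{L^2_{x_3}}$ are nonnegative functions on $\Torus^2$.

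Next I would estimate $\int_{\Torus^2} \phi\psi\chi\, dx'$ by H\"older's inequality on $\Torus^2$ with exponents $p, r_1, r_2$ --- legitimate because $1/p + 1/r_1 + 1/r_2 \le 1$ by hypothesis and $|\Torus^2| < \infty$, so any slack is absorbed by the embedding $L^a(\Torus^2) \hookrightarrow L^b(\Torus^2)$ for $a \ge b$ --- and then apply the two‑dimensional interpolation inequality $\Vert \varphi \Vert_{L^q(\Torus^2)} \le C \Vert \varphi \Vert_{L^2(\Torus^2)}^{2/q} \Vert \varphi \Vert_{H^1(\Torus^2)}^{1 - 2/q}$ to $\varphi = \phi, \psi, \chi$ with $q = p, r_1, r_2$ respectively. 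It remains to rewrite the horizontal norms in terms of $f, g, h$ on $\Omega$: by Fubini $\Vert \phi \Vert_{L^2(\Torus^2)} = \Vert f \Vert_{L^2(\Omega)}$, and differentiating the identity $\phi(x')^2 = \int_{-l}^{0} |f(x', z)|^2\, dz$ gives $\phi\, \nabla_{x'}\phi = \int_{-l}^{0} f\, \nabla_{x'} f \, dz$, whence $|\nabla_{x'}\phi(x')| \le \Vert \nabla_{x'} f(x', \cdot) \Vert_{L^2_{x_3}}$ wherever $\phi(x') > 0$ (and the zero set of $\phi$ contributes nothing), so that $\Vert \phi \Vert_{H^1(\Torus^2)} \le \Vert f \Vert_{H^1_{x'} L^2_{x_3}(\Omega)}$; the same bounds hold for $\psi$ and $\chi$. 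Multiplying the three resulting inequalities yields the assertion.

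The only step that needs a word of care is the pointwise gradient bound $|\nabla_{x'}\phi| \le \Vert \nabla_{x'} f(x', \cdot) \Vert_{L^2_{x_3}}$ --- equivalently, Minkowski's integral inequality allowing $\nabla_{x'}$ to pass inside the $L^2_{x_3}$‑norm --- which I would establish first for smooth $f$ and then extend to general $f \in H^1(\Omega)$ by density. Everything else is a routine combination of H\"older's inequality, Cauchy--Schwarz, and a standard interpolation inequality, so there is no real obstacle; the only slightly fiddly part is the bookkeeping needed to match the interpolation exponents to the hypotheses on $p, r_1, r_2$.
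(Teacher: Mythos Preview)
Your argument is correct and is precisely the standard anisotropic approach --- bound the vertical primitive in $L^\infty_{x_3}$ by Cauchy--Schwarz, reduce to a horizontal trilinear integral over $\Torus^2$, then apply H\"older and the two-dimensional Gagliardo--Nirenberg inequality, with the Minkowski-type bound $\Vert\nabla_{x'}\phi\Vert_{L^2(\Torus^2)} \le \Vert\nabla_{x'}f\Vert_{L^2(\Omega)}$ to pass back to norms of $f$. The paper does not supply its own proof but simply defers to Proposition~11 of \cite{Furukawa2024} and Lemma~2.1 of \cite{CaoTiti2017}, which use exactly this method.

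One small caveat worth recording: the interpolation step $\Vert\varphi\Vert_{L^q(\Torus^2)} \le C\Vert\varphi\Vert_{L^2}^{2/q}\Vert\varphi\Vert_{H^1}^{1-2/q}$ requires $q \ge 2$, so your argument as written covers the natural range $p, r_1, r_2 \ge 2$ --- in particular the only instance used in the paper, $p=2$, $r_1=r_2=4$. The hypothesis as literally stated would permit smaller values (making some exponents $1 - 2/q$ negative), but this appears to be a looseness in the proposition's formulation rather than a gap in your proof.
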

\begin{proof}
    See Proposition 11 in \cite{Furukawa2024} or Lemma 2.1 in \cite{CaoTiti2017} for the proof of this proposition.
\end{proof}
We apply integration by parts, the H\"{o}lder inequality, Proposition \ref{prop_trigonal_estimate} with $p=2,r_1=r_2=4$, and the formula (\ref{eq_divfree_cancelation}) to (\ref{eq_diff}) to see that
\begin{align*}
    & \frac{\partial_t}{2} \Vert
        V
    \Vert_{L^2 (\Omega)}^2
    + \Vert
        \nabla V
    \Vert_{L^2(\Omega)}^2 \\
    & \leq \left \vert
        \int_\Omega
            (U \cdot \nabla v) \cdot V
            + F \cdot V
        dx
    \right \vert
    - \int_\Omega
        \mu J_\delta V \cdot V
    dx\\
    & = \left \vert
        \int_\Omega
            \left[
                (V \cdot \nabla_H v) \cdot V
                - \left(
                    \int_{-l}^{x_3}
                        \mathrm{div}_H V
                    d\zeta
                \right)
                \partial_3 v \cdot V
                + F \cdot V
            \right]
        dx
    \right \vert\\
    & - \int_\Omega
        \mu J_\delta V \cdot V
    dx\\
    & \leq C \Vert
        V
    \Vert_{L^2(\Omega)}
    \Vert
        \nabla_H v
    \Vert_{L^6(\Omega)}
    \Vert
        V
    \Vert_{L^3(\Omega)} \\
    & + C \Vert
        \nabla_H V
    \Vert_{L^2(\Omega)}
    \Vert
        \partial_3 v
    \Vert_{L^2(\Omega)}^{\frac{1}{2}}
    \Vert
        \partial_3 v
    \Vert_{H^1(\Omega)}^{\frac{1}{2}}
    \Vert
        V
    \Vert_{L^2(\Omega)}^{\frac{1}{2}}
    \Vert
        V
    \Vert_{H^1(\Omega)}^{\frac{1}{2}} \\
    & + C \Vert
        F
    \Vert_{L^2(\Omega)}
    \Vert
        \nabla V
    \Vert_{L^2(\Omega)}
    - \mu \Vert
        V
    \Vert_{L^2(\Omega)}^2
    + C \mu \delta \Vert
        \nabla V
    \Vert_{L^2(\Omega)}^2,
\end{align*}
where we used the estimate
\begin{align}
    \begin{split}
        & - \int_\Omega
            J_\delta V \cdot V
        dx
        = - \int_\Omega
            (
                V
                + K_\delta V
            )\cdot V
        dx\\
        & = - \Vert
            V
        \Vert_{L^2(\Omega)}^2
        - \int_\Omega
            K_\delta V \cdot V
        dx\\
        & \leq - \Vert
            V
        \Vert_{L^2(\Omega)}^2
        + C \delta \Vert
            \nabla V
        \Vert_{L^2(\Omega)}
        \Vert
            V
        \Vert_{L^2(\Omega)}\\
        & \leq - \Vert
            V
        \Vert_{L^2(\Omega)}^2
        + C \delta \Vert
            \nabla V
        \Vert_{L^2(\Omega)}^2
    \end{split}
\end{align}
for some constant $C>0$.
Using the interpolation inequality and the embedding
\begin{equation*}
    \Vert
        \varphi
    \Vert_{L^3(\Omega)}
    \leq C \Vert
    \varphi
    \Vert_{L^2(\Omega)}^{\frac{1}{2}}
    \Vert
        \varphi
    \Vert_{\dot{H}^1(\Omega)}^{\frac{1}{2}} ,\quad
    \Vert
        \varphi
    \Vert_{L^6(\Omega)}
    \leq C \Vert
        \varphi
    \Vert_{H^1(\Omega)}
\end{equation*}
for $\varphi \in H^1(\Omega)$, the inequality (\ref{eq_J}) for $F$, and the Young inequality, we find that there exists a small $\delta$ such that
\begin{equation} \label{eq_differential_inequality_a_priori_estimate}
    \begin{aligned}
        & \frac{\partial_t}{2} \Vert
            V
        \Vert_{L^2 (\Omega)}^2
        + \Vert
            \nabla V
        \Vert_{L^2(\Omega)}^2 \\
        & \leq C \Vert
            V
        \Vert_{L^2(\Omega)}^{3/2}
        \Vert
            \nabla_H v
        \Vert_{L^6(\Omega)}
        \Vert
            \nabla V
        \Vert_{L^3(\Omega)}^{1/2} \\
        & + C \Vert
            \nabla V
        \Vert_{L^2(\Omega)}^{3/2}
        \Vert
            \partial_3 v
        \Vert_{L^2(\Omega)}^{\frac{1}{2}}
        \Vert
            \partial_3 v
        \Vert_{H^1(\Omega)}^{\frac{1}{2}}
        \Vert
            V
        \Vert_{L^2(\Omega)}^{\frac{1}{2}} \\
        & +  C \Vert
            F
        \Vert_{L^2(\Omega)}
        \Vert
            \nabla V
        \Vert_{L^2(\Omega)}
        - \mu \Vert
            V
        \Vert_{L^2(\Omega)}^2
        + C \mu \delta \Vert
            \nabla V
        \Vert_{L^2(\Omega)}^2\\
        & \leq C (
            \Vert
                \nabla_H v
            \Vert_{H^1(\Omega)}^{4/3}
            + \Vert
                \nabla v
            \Vert_{L^2(\Omega)}
            \Vert
                \nabla v
            \Vert_{H^1(\Omega)} \\
        &   \quad + \Vert
                \nabla v
            \Vert_{L^2(\Omega)}^2
            \Vert
                \nabla v
            \Vert_{H^1(\Omega)}^2
        )
        \Vert
            V
        \Vert_{L^2(\Omega)}^2 \\
        & - \mu
        \Vert
            V
        \Vert_{L^2(\Omega)}^2
        + C \delta^2 \Vert
            \nabla f
        \Vert_{L^2(\Omega)}^2
        + \frac{1}{2} \Vert
            \nabla V
        \Vert_{L^2(\Omega)}^2.
    \end{aligned}
\end{equation}
Since $v \in L^\infty (0, \infty; H^2(\Omega))$, we take $\mu > 0$ so large that
\begin{align} \label{eq_assumptions_for_v_and_mu_for_energy_estimate}
    \begin{split}
        & C (
            \Vert
                \nabla_H v
            \Vert_{H^1(\Omega)}^{4/3}
            + \Vert
                \nabla v
            \Vert_{L^2(\Omega)}
            \Vert
                \nabla v
            \Vert_{H^1(\Omega)}\\
        & \quad\quad\quad\quad\quad\quad
            + \Vert
                \nabla v
            \Vert_{L^2(\Omega)}^2
            \Vert
                \nabla v
            \Vert_{H^1(\Omega)}^2
        )
        \leq \mu/2,
    \end{split}
\end{align}
this corresponds to the first inequality of (\ref{eq_assumptions_for_v_mu_delta_for_theorem}), and apply the Gronwall inequality to obtain
\begin{align*}
    & \Vert
        V (t)
    \Vert_{L^2(\Omega)}^2\\
    & \leq e^{- \mu (t - \tau)/2} \Vert
        V(\tau)
    \Vert_{L^2(\Omega)}^2
    + C \delta^2 \int_\tau^t
        e^{- \mu (t - s)/2}
        \Vert
            \nabla f (s)
        \Vert_{L^2(\Omega)}^{2}
    ds\\
    & \leq e^{- \mu (t - \tau)/2} \Vert
        V_0
    \Vert_{L^2(\Omega)}^2
    + \frac{C \delta^2}{\mu} \sup_{\tau < s < t}
    \Vert
        \nabla f (s)
    \Vert_{L^2(\Omega)}^{2}
    (
        1
        - e^{-\mu (t - \tau)/2}
    ),
\end{align*}
for all $0 \leq \tau < t$ and
\begin{align*}
    \limsup_{t \rightarrow \infty} \Vert
        V(t)
    \Vert_{L^2(\Omega)}
    \leq \frac{C\delta^2}{\mu} \limsup_{t \rightarrow \infty}
    \Vert
        \nabla f (t)
    \Vert_{L^2(\Omega)}^2.
\end{align*}
We also see from (\ref{eq_differential_inequality_a_priori_estimate}) that
\begin{align*}
    & \int_t^{t+R}
        \Vert
            \nabla V(s)
        \Vert_{L^2(\Omega)}^2
    ds\\
    & \leq
    C (
        \Vert
            V (t + R)
        \Vert_{L^2 (\Omega)}^2
        - \Vert
            V (t)
        \Vert_{L^2 (\Omega)}^2
    )\\
    & + \frac{C \delta^2}{\mu}
    \int_t^{t+R}
        \Vert
            \nabla f (s)
        \Vert_{L^2(\Omega)}^2
    ds\\
    & \leq
    C \Vert
        V (t + R)
    \Vert_{L^2 (\Omega)}^2
    + \frac{C\delta^2 R}{\mu}
    \sup_{t < s < t + R}
        \Vert
            \nabla f (s)
        \Vert_{L^2(\Omega)}^2\\
    & \leq C e^{- \mu t/2} \Vert
        V_0
    \Vert_{L^2(\Omega)}^2
    + \frac{C \delta^2 R}{\mu} \sup_{t < s < t + R}
    \Vert
        \nabla f (s)
    \Vert_{L^2(\Omega)}^{2}
    (1 - e^{-\mu t/2}).
\end{align*}
Therefore, we deduce that
\begin{align} \label{eq_estimate_for_tempral_average_of_V}
    \int_n^{n+1}
        \Vert
            \nabla V(s)
        \Vert_{L^2(\Omega)}^2
    ds
    \leq \varepsilon
    + \frac{C \delta^2}{\mu} \limsup_{s \rightarrow \infty}
    \Vert
        \nabla f (s)
    \Vert_{L^2(\Omega)}^{2}.
\end{align}
for small $\varepsilon$, some $C>0$, and all integer $n > N_0$ for some sufficiently large $N$.
By \cite{Furukawa2024} we see that $V \in BC(0,T; H^1(\Omega))$ for all $0 < T < \infty$.
Therefore, combining this fact and (\ref{eq_estimate_for_tempral_average_of_V}), we deduce that for each small $\varepsilon >0$ there exists $t_n \in [n, n+1]$ for $n > N_0$ such that
\begin{align} \label{eq_definition_of_t0}
    \Vert
        \nabla V(t_n)
    \Vert_{L^2(\Omega)}^2
    \leq \varepsilon
    + \frac{C \delta^2}{\mu} \limsup_{s \rightarrow \infty}
    \Vert
        \nabla f (s)
    \Vert_{L^2(\Omega)}^{2}.
\end{align}

\section{Perturbed hydrostatic Stokes operator} \label{section_perturbed_hydrostatic_stokes_operator}
\subsection{Evolution operators}
\begin{proposition}[See \cite{HieberKashiwabara2016}] \label{prop_estimate_for_convection_term}
    Let $s\geq0$ and $1 < q < \infty$.
    There exists a constant $C>0$ such that
    \begin{align*}
        & \Vert
            v_1 \cdot \nabla_H v_2
        \Vert_{H^{s, q}(\Omega)}
        + \left \Vert
            \int_{-l}^{x_3}
                \mathrm{div}_H v_1
            dz \,
            \partial_3 v_2
        \right \Vert_{{H^{s, q}(\Omega)}} \\
        & \leq C \Vert
            v_1
        \Vert_{H^{s + 1 + \frac{1}{q}, q}(\Omega)}
        \Vert
            v_2
        \Vert_{H^{s + 1 + \frac{1}{q}, q}(\Omega)}
    \end{align*}
    for all $v_1, v_2 \in H^{s + 1 + \frac{1}{q}, q}(\Omega)^2$.
\end{proposition}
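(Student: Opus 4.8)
We sketch the argument (cf. \cite{HieberKashiwabara2016}). Write $\sigma = s + 1 + 1/q$ and let $\mathcal{V} h(x^\prime, x_3) = \int_{-l}^{x_3} h(x^\prime, z) \, dz$ denote the vertical antiderivative, so that the second term on the left-hand side is $\Vert (\mathcal{V} \, \mathrm{div}_H v_1) \, \partial_3 v_2 \Vert_{H^{s,q}(\Omega)}$. The plan is to prove the two bilinear model estimates $\Vert v_1 \cdot \nabla_H v_2 \Vert_{H^{s,q}(\Omega)} \leq C \Vert v_1 \Vert_{H^{\sigma,q}(\Omega)} \Vert v_2 \Vert_{H^{\sigma,q}(\Omega)}$ and $\Vert (\mathcal{V} \, \mathrm{div}_H v_1) \, \partial_3 v_2 \Vert_{H^{s,q}(\Omega)} \leq C \Vert v_1 \Vert_{H^{\sigma,q}(\Omega)} \Vert v_2 \Vert_{H^{\sigma,q}(\Omega)}$, using three ingredients: (i) $\mathcal{V}$ is bounded on $H^{s,q}(\Omega)$ for $s \geq 0$ (horizontal derivatives commute with $\mathcal{V}$, and $\partial_3 \mathcal{V}$ is the identity), and moreover $\mathcal{V}$ maps $L^q_{x^\prime} L^1_{x_3}(\Omega)$ into $L^q_{x^\prime} L^\infty_{x_3}(\Omega)$ since the vertical interval is bounded; (ii) $H^{s,q}(\Omega)$ is a multiplication algebra for $s > 3/q$, and the fractional Leibniz rule holds in $H^{s,q}$; (iii) Sobolev embeddings on $\Omega$ and, in the horizontal variable alone, on $\Torus^2$, together with the norm equivalence $\Vert h \Vert_{H^{s,q}(\Omega)} \approx \Vert h \Vert_{L^q_{x_3} H^{s,q}_{x^\prime}(\Omega)} + \Vert h \Vert_{H^{s,q}_{x_3} L^q_{x^\prime}(\Omega)}$ and Minkowski's inequality.

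When $s > 2/q$ both estimates are immediate. Indeed $\nabla_H v_2 \in H^{s + 1/q, q}(\Omega)$ with norm $\leq C \Vert v_2 \Vert_{H^{\sigma, q}}$, and by (i) the vertical velocity $w := - \mathcal{V} \, \mathrm{div}_H v_1$ obeys $\Vert w \Vert_{H^{s + 1/q, q}(\Omega)} \leq C \Vert \mathrm{div}_H v_1 \Vert_{H^{s + 1/q, q}(\Omega)} \leq C \Vert v_1 \Vert_{H^{\sigma, q}}$; since $s + 1/q > 3/q$, the algebra property at the level $s + 1/q$ and the embedding $H^{s + 1/q, q}(\Omega) \hookrightarrow H^{s,q}(\Omega)$ give both inequalities. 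Thus the real content lies in the range $0 \leq s \leq 2/q$, where $H^{\sigma, q}(\Omega)$ need not embed into $L^\infty(\Omega)$ --- for example $s = 0$, $q = 3/2$ gives $\sigma q = 5/2 < 3$ --- so bounding $\Vert w \Vert_{L^q(\Omega)} \leq C \Vert \mathrm{div}_H v_1 \Vert_{L^q(\Omega)}$ and applying H\"{o}lder's inequality in $L^q(\Omega)$ does not close the estimate.

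In this range one exploits the anisotropy. Split the $H^{s,q}(\Omega)$-norm of each product into the horizontal part $\Vert \cdot \Vert_{L^q_{x_3} H^{s,q}_{x^\prime}}$ and the vertical part $\Vert \cdot \Vert_{H^{s,q}_{x_3} L^q_{x^\prime}}$ via (iii). For the horizontal part, when $s = 0$ estimate $\Vert w(\cdot, x_3) \, \partial_3 v_2(\cdot, x_3) \Vert_{L^q_{x^\prime}} \leq \Vert w(\cdot, x_3) \Vert_{L^{2q}_{x^\prime}} \Vert \partial_3 v_2(\cdot, x_3) \Vert_{L^{2q}_{x^\prime}}$, then integrate in $x_3$ by H\"{o}lder's inequality, keeping $w$ in $L^\infty_{x_3}$ through the second half of (i) and Minkowski's inequality, and finally invoke the two-dimensional embedding $H^{1/q, q}(\Torus^2) \hookrightarrow L^{2q}(\Torus^2)$ to control $\mathrm{div}_H v_1$ and $\partial_3 v_2$ by $v_1, v_2 \in H^{1 + 1/q, q}(\Omega)$: here the extra $1/q$ derivative is exactly what is needed for two such horizontal factors to multiply back into $L^q_{x^\prime}$. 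For $0 < s \leq 2/q$ the same scheme applies, with the two-dimensional fractional Leibniz rule (applied pointwise in $x_3$) carrying the remaining $s$ derivatives. The vertical part is treated symmetrically in the $x_3$-variable, now using $\partial_3 w = - \mathrm{div}_H v_1$ so that $w$ inherits the vertical regularity of $v_1$; it is vacuous when $s = 0$. The first model estimate is strictly easier, as $v_1 \cdot \nabla_H v_2$ involves no vertical integration. I expect the main difficulty to be keeping the bookkeeping between the fractional $H^{s,q}$-smoothness and the anisotropic mixed norms consistent once the Sobolev exponents are chosen; the decisive structural fact is that the vertical velocity $w$ lies in $L^\infty_{x_3}$, not merely in $L^q_{x_3}$, which is precisely what the $1/q$ loss of derivatives buys.
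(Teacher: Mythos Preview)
The paper does not prove this proposition; it simply cites \cite{HieberKashiwabara2016} and moves on. Your sketch is a faithful and correct reconstruction of the argument in that reference (specifically their Lemma~5.1), built on exactly the same anisotropic mechanism: splitting into horizontal and vertical mixed norms, using that the vertical primitive $\mathcal{V}$ gains $L^\infty_{x_3}$-control, and closing via the two-dimensional Sobolev embedding $H^{1/q,q}(\Torus^2)\hookrightarrow L^{2q}(\Torus^2)$. There is nothing to compare on the paper's side, and your outline stands on its own.
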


We denote the linearized perturbed operator $\mathcal{A}_{\mu, \delta}(t)$ by
\begin{gather} \label{eq_definition_of_A_mu_delta}
    \begin{split}
        \mathcal{A}_{\mu, \delta}(t) \varphi^\prime
        := - P \Delta \varphi^\prime
        + P(u \cdot \nabla \varphi^\prime)
        + P(\varphi \cdot \nabla v)
        + \mu P J_\delta \varphi^\prime,\\
        D(\mathcal{A}_{\mu, \delta}(t))
        = \{
            \varphi^\prime \in H^2(\Omega)^2
            :
            \mathrm{div}_H \overline{\varphi^\prime} = 0,
            \text{$\varphi^\prime$ satisfies (\ref{eq_bound_conditions})}
        \},
    \end{split}
\end{gather}
where we have denoted as
\begin{align} \label{eq_manner_varphi_prime_to_varphi}
    \varphi
    = (\varphi^\prime, \varphi_3), \,
    \varphi^\prime
    \in H^1(\Omega)^2, \,
    \varphi_3
    = - \int_{-1}^{x_3} \mathrm{div}_H \varphi^\prime dz.
\end{align}
Note that $\mathcal{A}_{\mu, \delta}(t)$ depends on $t$ since $v$ depends on $t$ but the domain $D(\mathcal{A}_{\mu, \delta}(t))$ is independent of $t$.
We denote the linear closed operator $\mathcal{B}_{\mu, \delta}(t)$ by
\begin{align*}
    \mathcal{B}_{\mu, \delta}(t) \varphi^\prime
    & := P(u \cdot \nabla \varphi^\prime)
    + P(\varphi \cdot \nabla v)
    + \mu P K_\delta \varphi^\prime,\\
    D(\mathcal{B}_{\mu, \delta}(t))
    & = D(A^{3/2})
    \hookrightarrow D(A).
\end{align*}
The domain $D(\mathcal{B}_{\mu, \delta}(t))$ is independent of $t$.
We set the bilinear form $B$ by
\begin{align*}
    B_{\mu, \delta}(t; \varphi^\prime, \psi^\prime)
    := \int_\Omega
        \mu \varphi^\prime \cdot \psi^\prime
        + \nabla \varphi^\prime : \nabla \psi^\prime
        + \mathcal{B}_{\mu, \delta}(t) \varphi^\prime \cdot \nabla \psi^\prime
    dx.
\end{align*}
for $\varphi^\prime, \psi^\prime \in X_{1/2}$, where $X_{1/2}$ is defined by
\begin{align*}
    X_{1/2}
    = \{ \varphi^\prime \in H^1(\Omega)^2
        ;
        \int_{-l}^0 \mathrm{div}_H \varphi^\prime dz
        = 0, \,
        \varphi^\prime \vert_{x_3=-l}
        = 0
    \}.
\end{align*}

Since $v \in BC(0, T; H^2)$, we deduce form the Cauchy-Schwarz inequality and the Poincar\'{e} inequality that
\begin{align} \label{eq_bound_for_B_by_nabla_phi_nabla_psi}
    \begin{split}
        & \vert
            B_{\mu, \delta}(t; \varphi^\prime, \psi^\prime)
        \vert\\
        & \leq \mu \Vert
            \varphi^\prime
        \Vert_{L^2(\Omega)}
        \Vert
            \psi^\prime
        \Vert_{L^2(\Omega)}
        + \Vert
            \nabla \varphi^\prime
        \Vert_{L^2(\Omega)}
        \Vert
            \nabla \psi^\prime
        \Vert_{L^2(\Omega)}\\
        & + C
        \Vert
            v(t)
        \Vert_{H^2(\Omega)}
        \Vert
            \nabla \varphi^\prime
        \Vert_{L^2(\Omega)}
        \Vert
            \nabla \psi^\prime
        \Vert_{L^2(\Omega)}
        + C \mu \delta \Vert
            \nabla \varphi^\prime
        \Vert_{L^2(\Omega)}
        \Vert
            \psi^\prime
        \Vert_{L^2(\Omega)}\\
        & \leq C  \Vert
            \nabla \varphi^\prime
        \Vert_{L^2(\Omega)}
        \Vert
            \nabla \psi^\prime
        \Vert_{L^2(\Omega)}
    \end{split}
\end{align}
for some $C>0$ which depends on $\mu, \delta, \Vert v \Vert_{L^\infty_t H^2_x(Q_T)}$.
We see from these estimates that the bilinear $B(t, \cdot, \cdot): H^1_{\overline{\sigma}}(\Omega) \times H^1_{\overline{\sigma}}(\Omega) \rightarrow \Real$ is bounded.
We deduce that
\begin{align*}
    & B_{\mu, \delta}(t; \varphi^\prime, \varphi^\prime)\\
    & \geq \mu \Vert
        \varphi^\prime
    \Vert_{L^2(\Omega)}^2
    + \Vert
        \nabla \varphi^\prime
    \Vert_{L^2(\Omega)}^2
    - \left \vert
        \int_\Omega
            P (\varphi \cdot \nabla v) \cdot \varphi^\prime
        dx
    \right \vert
    - \mu \left \vert
        \int_\Omega
            P K_\delta \varphi^\prime \cdot \varphi^\prime
        dx
    \right \vert\\
    & \geq \mu \Vert
        \varphi^\prime
    \Vert_{L^2(\Omega)}^2
    + \Vert
        \nabla \varphi^\prime
    \Vert_{L^2(\Omega)}^2\\
    & - C \Vert
        v
    \Vert_{L^\infty_t H^2_x(\Omega)}
    \Vert
        \nabla \varphi^\prime
    \Vert_{L^2(\Omega)}^{3/2}
    \Vert
        \varphi^\prime
    \Vert_{L^2(\Omega)}^{1/2}
    - C \mu \delta \Vert
        \nabla \varphi^\prime
    \Vert_{L^2(\Omega)}
    \Vert
        \varphi^\prime
    \Vert_{L^2(\Omega)}
\end{align*}
for some constant $C>0$.
Since
\begin{align*}
    & \Vert
        v
    \Vert_{L^\infty_t H^2_x(\Omega)}
    \Vert
        \nabla \varphi^\prime
    \Vert_{L^2(\Omega)}^{3/2}
    \Vert
        \varphi^\prime
    \Vert_{L^2(\Omega)}^{1/2}\\
    & \leq C \Vert
        v
    \Vert_{L^\infty_t H^2_x(\Omega)}^4
    \Vert
        \varphi^\prime
    \Vert_{L^2(\Omega)}^2
    + \frac{1}{4} \Vert
        \nabla \varphi^\prime
    \Vert_{L^2(\Omega)}^2,
\end{align*}
we obtain that
\begin{align*}
    & B_{\mu, \delta}(t; \varphi^\prime, \varphi^\prime)\\
    & \geq \left(
        \mu
        - C_1 \Vert
            v
        \Vert_{L^\infty_t H^2_x(\Omega)}^4
        - C_2 \mu^2 \delta^2
    \right) \Vert
        \varphi^\prime
    \Vert_{L^2(\Omega)}^2
    + \frac{1}{2} \Vert
        \nabla \varphi^\prime
    \Vert_{L^2(\Omega)}^2
\end{align*}
for some constant $C>0$.
If we take $\mu$ so large and take $\delta$ so small that
\begin{align} \label{eq_condition_mu_and_delta_and_norm_of_v_1}
    \frac{\mu}{4}
    \geq C_1 \Vert
        v
    \Vert_{L^\infty_t H^2_x(\Omega)}^4, \quad
    \frac{\mu}{4}
    \geq C_2 \mu^2 \delta^2,
\end{align}
we can conclude that
\begin{align} \label{eq_lower_bound_for_B_t_varphi_varphi}
    B_{\mu, \delta}(t; \varphi^\prime, \varphi^\prime)
    \geq \mu_\ast \Vert
        \varphi^\prime
    \Vert_{L^2(\Omega)}^2
    + \frac{1}{2} \Vert
        \nabla \varphi^\prime
    \Vert_{L^2(\Omega)}^2
\end{align}
for some $\mu_\ast \geq \mu/2 > 0$.
Typically we can take $\delta = O(\mu^{-1})$ for large $\mu>0$.
The bilinear $B_{\mu, \delta}$ is coercive.
By the Sobolev inequalities and $D(A^{3/2}) \hookrightarrow H^{3/2}(\Omega)^2$, we have
\begin{align} \label{eq_estimate_for_mathcal_B_part1}
    \begin{split}
        & \Vert
            u \cdot \nabla V
        \Vert_{L^2(\Omega)}\\
        & \leq \Vert
            v \cdot \nabla_H V
        \Vert_{L^2(\Omega)}
        + \Vert
            w \partial_3 V
        \Vert_{L^2(\Omega)}\\
        & \leq C \left(
            \Vert
                v
            \Vert_{H^{3/2+\varepsilon}(\Omega)}
            \Vert
                V
            \Vert_{H^1(\Omega)}
            + \int_{-1}^1
                \Vert
                    \mathrm{div}_H v
                \Vert_{L^4(\Torus^2)}
            dz
            \Vert
                \partial_3 V
            \Vert_{L^2_{x_3}L^4_{x^\prime}(\Omega)}
        \right)\\
        & \leq C \left(
            \Vert
                v
            \Vert_{H^{3/2+\varepsilon}(\Omega)}
            \Vert
                V
            \Vert_{H^1(\Omega)}
            + \Vert
                v
            \Vert_{H^{3/2}(\Omega)}
            \Vert
                V
            \Vert_{H^{3/2}(\Omega)}
        \right)
    \end{split}
\end{align}
for all $\varepsilon>0$, and
\begin{align} \label{eq_estimate_for_mathcal_B_part2}
    \begin{split}
        & \Vert
            U \cdot \nabla v
        \Vert_{L^2(\Omega)}\\
        & \leq \Vert
            V \cdot \nabla_H v
        \Vert_{L^2(\Omega)}
        + \Vert
            W \partial_3 v
        \Vert_{L^2(\Omega)}\\
        & \leq C \left(
            \Vert
                V
            \Vert_{H^1(\Omega)}
            \Vert
                v
            \Vert_{H^{3/2}(\Omega)}
            + \int_{-1}^1
                \Vert
                    \mathrm{div}_H V
                \Vert_{L^4(\Torus^2)}
            dz
            \Vert
                \partial_3 v
            \Vert_{L^2_{x_3}L^4_{x^\prime}(\Omega)}
        \right)\\
        & \leq C \left(
            \Vert
                V
            \Vert_{H^1(\Omega)}
            \Vert
                v
            \Vert_{H^{3/2}(\Omega)}
            + \Vert
                V
            \Vert_{H^{3/2}(\Omega)}
            \Vert
                v
            \Vert_{H^{3/2}(\Omega)}
        \right).
    \end{split}
\end{align}
By the definition of $K_\delta$, we know
\begin{align} \label{eq_estimate_for_mathcal_B_part3}
    \Vert
        \mu P K_\delta V
    \Vert_{L^2(\Omega)}
    \leq C \mu \delta
    \Vert
        V
    \Vert_{H^1(\Omega)}.
\end{align}
Since $\mu + A$ is positive self-adjoint, if $\mu>0$ is so large and $\delta$ is so small that
\begin{align} \label{eq_condition_mu_and_delta_and_norm_of_v_2}
    C (
        \sup_{t>0} \Vert
            v(t)
        \Vert_{H^2(\Omega)}
        + \delta \mu
    )
    \leq \frac{\mu^{3/4}}{2}
\end{align}
for some constant $C>0$, we see that
\begin{align*}
    & \Vert
        \mathcal{B}_{\mu, \delta}(t) V
    \Vert_{L^2(\Omega)}\\
    & \leq C (
        \sup_{t>0} \Vert
            v(t)
        \Vert_{H^2(\Omega)}
        + \delta \mu
    )
    \Vert
        V
    \Vert_{H^{3/2}(\Omega)}\\
    & \leq C (
        \sup_{t>0} \Vert
            v(t)
        \Vert_{H^2(\Omega)}
        + \delta \mu
    )
    \Vert
        A^{3/4} (\mu + A)^{-{3/4}}
    \Vert_{L^2(\Omega) \rightarrow L^2(\Omega)}\\
    & \quad\quad\quad
    \times \Vert
        (\mu + A)^{3/4}V
    \Vert_{L^2(\Omega)}\\
    & \leq \frac{1}{2}
    \Vert
        V
    \Vert_{D((\mu + A)^{3/4})}.
\end{align*}
We find from this estimate and (\ref{eq_lower_bound_for_B_t_varphi_varphi}) and perturbation theory of analytic semigroup that $\mathcal{A}_{\mu, \delta}(t)$ is invertible and a generator of an analytic semigroup.
Moreover the real part of the spectrum of $\mathcal{A}_{\mu, \delta}(t)$ is bounded by $\mu_\ast>0$.
Therefore, we have
\begin{proposition}
    Let $\mu > 1$ and $\delta$ satisfy (\ref{eq_condition_mu_and_delta_and_norm_of_v_1}) and (\ref{eq_condition_mu_and_delta_and_norm_of_v_2}).
    Then the perturbed operator $\mathcal{A}_{\mu, \delta}(t)$ defined by (\ref{eq_definition_of_A_mu_delta}) generates an analytic semigroup on $L^2_{\overline{\sigma}}(\Omega)$ such that
    \begin{align*}
        \Vert
            e^{-t \mathcal{A}_{\mu, \delta} (s)}
        \Vert_{L^2_{\overline{\sigma}} \rightarrow L^2_{\overline{\sigma}}}
        & \leq C e^{- \mu_\ast t},\\
        \Vert
            \mathcal{A}_{\mu, \delta} (s) e^{-t \mathcal{A}_{\mu, \delta} (s)}
        \Vert_{L^2_{\overline{\sigma}} \rightarrow L^2_{\overline{\sigma}}}
        + \Vert
            \partial_t e^{-t \mathcal{A}_{\mu, \delta} (s)}
        \Vert_{L^2_{\overline{\sigma}} \rightarrow L^2_{\overline{\sigma}}}
        & \leq C t^{-1} e^{- \mu_\ast t}.
    \end{align*}
    Moreover, $\mathcal{A}_{\mu, \delta}(s)$ admits its fractional power $\mathcal{A}_{\mu, \delta}(s)^{\alpha}$ such that
    \begin{align*}
        \Vert
            \mathcal{A}_{\mu, \delta} (s)^\alpha e^{-t \mathcal{A}_{\mu, \delta}(s)}
        \Vert_{L^2_{\overline{\sigma}} \rightarrow L^2_{\overline{\sigma}}}
        & \leq C t^{-\alpha} e^{- \mu_\ast t}
    \end{align*}
    for some $C>0$, all $s, t>0$, and $0 \leq \alpha \leq 1$.
\end{proposition}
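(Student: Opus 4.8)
The plan is to obtain the proposition as a corollary of the analysis carried out just above it, combined with the textbook calculus of sectorial operators. Fix $s>0$, abbreviate $\mathcal{A}:=\mathcal{A}_{\mu,\delta}(s)$ and $A_0:=\mu+A$, and recall two facts already in hand: $A_0$ is positive self-adjoint on $L^2_{\overline{\sigma}}(\Omega)$ with $\sigma(A_0)\subset[\mu+c_0,\infty)$ for the Poincar\'{e} constant $c_0>0$, and $D(A_0^{3/4})\hookrightarrow H^{3/2}(\Omega)^2$; moreover, under (\ref{eq_condition_mu_and_delta_and_norm_of_v_1})--(\ref{eq_condition_mu_and_delta_and_norm_of_v_2}), one has the splitting $\mathcal{A}=A_0+\mathcal{B}_{\mu,\delta}(s)$ with the relative bound $\Vert\mathcal{B}_{\mu,\delta}(s)V\Vert_{L^2(\Omega)}\leq\frac{1}{2}\Vert A_0^{3/4}V\Vert_{L^2(\Omega)}$ (read off (\ref{eq_estimate_for_mathcal_B_part1})--(\ref{eq_estimate_for_mathcal_B_part3})) and the form coercivity (\ref{eq_lower_bound_for_B_t_varphi_varphi}). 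Every constant here is controlled by $\mu$, $\delta$ and $\sup_{t>0}\Vert v(t)\Vert_{H^2(\Omega)}$ alone, so all bounds below will be uniform in $s$, which is precisely what will later permit the construction of $T_{\mu,\delta}(t,s)$.

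First I would settle analyticity and invertibility. Since $3/4<1$, the moment inequality $\Vert A_0^{3/4}V\Vert_{L^2(\Omega)}\leq\varepsilon\Vert A_0V\Vert_{L^2(\Omega)}+C_\varepsilon\Vert V\Vert_{L^2(\Omega)}$ upgrades the relative bound to an $A_0$-bound zero for $\mathcal{B}_{\mu,\delta}(s)$; the perturbation theorem for generators of analytic semigroups (see \cite{EngelNagel2000,Yagi2010}) then makes $\mathcal{A}$ sectorial, with a half-sector $\mathbb{C}\setminus\Sigma_{\psi,0}$ ($\psi<\pi/2$) contained in its resolvent set and $\Vert(\lambda-\mathcal{A})^{-1}\Vert\leq C/\vert\lambda\vert$ there, while invertibility of $\mathcal{A}$ is immediate from (\ref{eq_lower_bound_for_B_t_varphi_varphi}).

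Next I would localize the spectrum and integrate. Coercivity (\ref{eq_lower_bound_for_B_t_varphi_varphi}) (which is exactly $\mathrm{Re}\,\langle\mathcal{A}\varphi^\prime,\varphi^\prime\rangle\geq\mu_\ast\Vert\varphi^\prime\Vert_{L^2(\Omega)}^2$) puts the numerical range of $\mathcal{A}$ inside $\{\mathrm{Re}\,z\geq\mu_\ast\}$, hence, using that the analytic semigroup already forces $\rho(\mathcal{A})$ to contain points of arbitrarily negative real part, $\sigma(\mathcal{A})\subset\{\mathrm{Re}\,z\geq\mu_\ast\}$ with $\Vert(\lambda-\mathcal{A})^{-1}\Vert\leq(\mu_\ast-\mathrm{Re}\,\lambda)^{-1}$ for $\mathrm{Re}\,\lambda<\mu_\ast$; combined with the sectorial bound from analyticity this gives $\sigma(\mathcal{A})\subset\Sigma_{\theta,\mu_\ast}$ for some $\theta<\pi/2$ and $\Vert(\lambda-\mathcal{A})^{-1}\Vert\leq C/\vert\lambda-\mu_\ast\vert$ on $\mathbb{C}\setminus\Sigma_{\theta^\prime,\mu_\ast}$, $\theta<\theta^\prime<\pi/2$. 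Then I would write
\begin{equation*}
    e^{-t\mathcal{A}}=\frac{1}{2\pi i}\int_{\Gamma_{\theta^\prime,\mu_\ast}}e^{-t\lambda}(\lambda-\mathcal{A})^{-1}\,d\lambda ,
\end{equation*}
with the contour routed around the vertex on an arc of radius $t^{-1}$, and use $\mathrm{Re}\,\lambda=\mu_\ast+r\cos\theta^\prime$ on the two rays to get $\Vert e^{-t\mathcal{A}}\Vert\leq Ce^{-\mu_\ast t}$; inserting the factor $\lambda$ (and using $\int_{\Gamma_{\theta^\prime,\mu_\ast}}e^{-t\lambda}\,d\lambda=0$) gives $\Vert\mathcal{A}e^{-t\mathcal{A}}\Vert\leq Ct^{-1}e^{-\mu_\ast t}$, and $\partial_t e^{-t\mathcal{A}}=-\mathcal{A}e^{-t\mathcal{A}}$ transfers this bound to the time derivative. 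Finally, $\mathcal{A}$ being invertible and sectorial with spectrum in $\Sigma_{\theta,\mu_\ast}$, the powers $\mathcal{A}^\alpha$ ($0\leq\alpha\leq1$) are defined by the holomorphic functional calculus, and putting $\lambda^\alpha$ (branch cut along $(-\infty,0]$, which misses $\Gamma_{\theta^\prime,\mu_\ast}$) into the same integral and rescaling $\lambda\mapsto\lambda/t$ produces $\Vert\mathcal{A}^\alpha e^{-t\mathcal{A}}\Vert\leq Ct^{-\alpha}e^{-\mu_\ast t}$.

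I expect the two points needing care to be the following. First, the perturbation step must yield not merely analyticity but a resolvent estimate on the full shifted sector with the rate $\mu_\ast\geq\mu/2$; this is where the relative bound being $\frac{1}{2}$ (comfortably below $1$) and the accretivity of $\mathcal{A}-\mu_\ast$ are both genuinely used, and it is the part to write out in detail. Second, in the fractional-power estimate the vertex arc of $\Gamma_{\theta^\prime,\mu_\ast}$ contributes a term of order $e^{-\mu_\ast t}$ rather than $t^{-\alpha}e^{-\mu_\ast t}$ as $t\to\infty$; one therefore either carries a factor $\max(1,t^{-\alpha})$ or trades an arbitrarily small amount of exponential rate for it, which is immaterial since only a fixed rate comparable to $\mu$ is needed afterwards. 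Apart from these, everything is a direct specialization of the material in \cite[Ch.~2]{Yagi2010}.
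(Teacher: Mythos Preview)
Your proposal is correct and matches the paper's approach exactly: the paper does not give a separate proof of this proposition but simply records it as a consequence of the relative bound $\Vert\mathcal{B}_{\mu,\delta}(t)V\Vert\leq\frac{1}{2}\Vert(\mu+A)^{3/4}V\Vert$ together with the coercivity (\ref{eq_lower_bound_for_B_t_varphi_varphi}) and standard perturbation theory for analytic semigroups. Your write-up fills in precisely the details the paper leaves implicit, including the numerical-range argument for the spectral bound and the Dunford-integral computation of the decay rates.
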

Let $T_{\mu, \delta} (t,s)$ be the evolution operator associated with $\mathcal{A}_{\mu, \delta}(t)$ constructed later such that $u(t) = T_{\mu, \delta} (t,s) \psi$ for $t>s>0$ is the solution to
\begin{equation} \label{eq_dudt=A_mu_u}
    \begin{aligned}
        \frac{du}{dt}
        & = - \mathcal{A}_{\mu, \delta}(t,s) u(t),
        & 0 < s < t,\\
        u
        & = \psi,
        & t=s.
    \end{aligned}
\end{equation}
The solution to (\ref{eq_diff}) can be represented by the integral formula
\begin{align*}
    V(t)
    & = V(s)
    + \int_s^t
        T_{\mu, \delta}(t, \tau) P
        (
            V(\tau) \cdot \nabla_H V(\tau)
            + W (\tau)\partial_3 V(\tau)
            + F(\tau)
        )
    d\tau.
\end{align*}
We present an abstract lemma for the existence of an evolution operator by Kato and Tanabe \cite{KatoTanabe1962}.
The reader can also refer to the book by Tanabe \cite{Tanabebook1979} for a comprehensive study of evolution operators.
\begin{lemma}[See Theorem 5.2.2-3 in \cite{Tanabebook1979}] \label{lem_existence_of_abstract_evolution_operator}
    Let $T>0$ and $X$ be a Banach space.
    Assume that $\mathcal{A}_{\mu, \delta}(t)$ is a generator of an analytic semigroup such that
    \begin{align}
        \Vert
            (\lambda - \mathcal{A}_{\mu, \delta}(t))^{-1}
        \Vert_{X \rightarrow X}
        \leq \frac{K}{1 + \vert
            \lambda
        \vert}
    \end{align}
    for all $\lambda \in \Sigma_{\theta, \mu}$ for some $\theta \in (0, \pi/2)$ and $\mu>0$.
    Assume that $\mathcal{A}_{\mu, \delta}(t) \mathcal{A}(r)^{-1}$ is a bounded H\"{o}lder continuous operator such that
    \begin{align} \label{eq_assumption_for_bilinear_E_continuity}
        \Vert
            \mathcal{A}(t) \mathcal{A}(r)^{-1}
            - \mathcal{A}(s) \mathcal{A}(r)^{-1}
        \Vert_{X \rightarrow X}
        \leq K \vert
            t - s
        \vert^\alpha
    \end{align}
    holds for all $0 \leq r \leq s \leq t $, $K>0$, $0 < \alpha \leq 1$.
    Assume that $D(\mathcal{Z}(t)) \subset X$ is independent of $t$.
    Then there exists an evolution operator
    \begin{align*}
        S(t,s): X
        \rightarrow D(\mathcal{Z}(t))
        = D(\mathcal{Z}(0))
    \end{align*}
    such that
    \begin{gather} \label{eq_properties_for_T_ts}
        \begin{split}
            \frac{d}{dt} S(t,s)
            = - \mathcal{Z}(t) S(t,s),\\
            \left \Vert
                \frac{d}{dt} S(t,s)
            \right \Vert_{X \rightarrow X}
            + \left \Vert
                \mathcal{Z}(t) S(t,s)
            \right \Vert_{X \rightarrow X}
            \leq C (t - s)^{-1},\\
            \left \Vert
                \frac{d}{ds} S(t,s)
            \right \Vert_{X \rightarrow X}
            \leq C (t - s)^{-1},
        \end{split}
    \end{gather}
    for $0 < s < t < T$.
\end{lemma}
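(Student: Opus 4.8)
The plan is to follow the classical Kato--Tanabe parametrix (Levi sum) construction, taking the frozen-coefficient analytic semigroups $e^{-(t-s)\mathcal{A}(s)}$ of the abstract family (denoted $\mathcal{A}(t)$, and $\mathcal{Z}(t)$, in the statement) as the zeroth approximation to the evolution operator $S(t,s)$. First I would extract from the hypotheses the two estimates that drive the whole argument. The uniform sectorial resolvent bound on $\Sigma_{\theta,\mu}$ yields, through the Dunford integral $e^{-\tau\mathcal{A}(s)}=\frac{1}{2\pi i}\int_{\Gamma_{\theta,\mu}}e^{\tau\lambda}(\lambda-\mathcal{A}(s))^{-1}\,d\lambda$ over a suitable contour, the standard smoothing estimates $\Vert e^{-\tau\mathcal{A}(s)}\Vert \le Ce^{-\mu_\ast\tau}$ and $\Vert\mathcal{A}(s)^{k}e^{-\tau\mathcal{A}(s)}\Vert \le C\tau^{-k}e^{-\mu_\ast\tau}$, uniformly in $s$. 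Combining the case $k=1$ with the H\"{o}lder hypothesis (\ref{eq_assumption_for_bilinear_E_continuity}), used in the equivalent form $\Vert(\mathcal{A}(t)-\mathcal{A}(s))\mathcal{A}(\tau)^{-1}\Vert \le K\vert t-s\vert^{\alpha}$, gives the key weakly singular kernel bound
\begin{align*}
    \Vert(\mathcal{A}(t)-\mathcal{A}(s))\,e^{-(t-s)\mathcal{A}(s)}\Vert_{X\to X}
    \le C\,(t-s)^{\alpha-1}\,e^{-\mu_\ast(t-s)},
    \qquad 0\le s<t\le T.
\end{align*}

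Next I would set up the Volterra equation for the correction. Writing $S(t,s)=e^{-(t-s)\mathcal{A}(s)}+\int_s^t e^{-(t-\tau)\mathcal{A}(\tau)}R(\tau,s)\,d\tau$ and formally imposing $(\partial_t+\mathcal{A}(t))S(t,s)=0$ forces $R$ to solve $R(t,s)=R_1(t,s)+\int_s^t R_1(t,\tau)R(\tau,s)\,d\tau$, where $R_1(t,s)=-(\mathcal{A}(t)-\mathcal{A}(s))e^{-(t-s)\mathcal{A}(s)}$ satisfies $\Vert R_1(t,s)\Vert\le C(t-s)^{\alpha-1}$. The iterated kernels then obey $\Vert R_n(t,s)\Vert\le C^{n}\Gamma(\alpha)^{n}\Gamma(n\alpha)^{-1}(t-s)^{n\alpha-1}$ by the Beta-function identity for convolutions of power weights, so $R=\sum_{n\ge1}R_n$ converges absolutely in $\mathcal{L}(X)$ and satisfies $\Vert R(t,s)\Vert\le C(t-s)^{\alpha-1}$ (the exponential factor can be carried along throughout, as can a proof that $R(\cdot,s)$ is itself H\"{o}lder continuous). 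This defines $S(t,s)$ as a bounded operator for $0\le s\le t\le T$.

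Then I would check that this $S(t,s)$ really is the evolution operator with the asserted properties. Differentiating under the integral sign and applying $\mathcal{A}(t)$ — which is legitimate once one performs the standard splitting $\mathcal{A}(t)=\mathcal{A}(\tau)+(\mathcal{A}(t)-\mathcal{A}(\tau))$ and the subtraction $R(\tau,s)=(R(\tau,s)-R(t,s))+R(t,s)$ to tame the apparent $(t-\tau)^{-1}$ singularity at $\tau=t$ — yields $\partial_t S(t,s)=-\mathcal{A}(t)S(t,s)$ and $\Vert\mathcal{A}(t)S(t,s)\Vert+\Vert\partial_t S(t,s)\Vert\le C(t-s)^{-1}$; in particular $S(t,s)$ maps $X$ into $D(\mathcal{A}(t))=D(\mathcal{A}(0))$. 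The identities $S(s,s)=I$ and $S(t,r)S(r,s)=S(t,s)$, and strong continuity up to the diagonal, follow from uniqueness for the Cauchy problem (\ref{eq_dudt=A_mu_u}): both sides solve the same well-posed problem with the same data. Finally the more delicate estimate $\Vert\partial_s S(t,s)\Vert\le C(t-s)^{-1}$ is obtained by showing that $s\mapsto S(t,s)$ also satisfies the backward equation $\partial_s S(t,s)=S(t,s)\mathcal{A}(s)$ on $D(\mathcal{A}(s))$, together with the bound $\Vert S(t,s)\mathcal{A}(s)\varphi\Vert\le C(t-s)^{-1}\Vert\varphi\Vert$, and then extending by density.

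I expect the main obstacle to be exactly the behaviour near the diagonal $t=s$: propagating the singular weight $(t-s)^{\alpha-1}$ through the full Levi iteration with the correct $\Gamma$-function combinatorics, and — more subtly — recovering the differentiability of $S(t,s)$ in the initial-time variable $s$ with the sharp rate $(t-s)^{-1}$, since termwise differentiation of the parametrix series in $s$ does not obviously give a summable bound; this is where the $t$-independence of $D(\mathcal{A}(t))$ and the H\"{o}lder (not merely continuous) dependence on $t$ are used in full. Everything else is bookkeeping with the Dunford calculus and weakly singular convolutions, so once the two displayed reductions are in place one may invoke Theorem 5.2.2--5.2.3 of \cite{Tanabebook1979} (equivalently the construction of Kato and Tanabe \cite{KatoTanabe1962}) for the remaining verifications.
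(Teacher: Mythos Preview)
Your proposal is correct and follows precisely the classical Kato--Tanabe parametrix construction that the paper invokes (the lemma is stated with a citation to Tanabe's book, and the paper's Remark \ref{rmk_for_lem_existence_of_abstract_evolution_operator} sketches exactly the same Levi-sum ansatz $S(t,s)=e^{-(t-s)\mathcal{Z}(s)}+\int_s^t e^{-(t-\tau)\mathcal{Z}(\tau)}R(\tau,s)\,d\tau$, the Volterra equation for $R$, and the $\Gamma(k\alpha)^{-1}$ bounds on the iterates $R_k$ that you describe). Your identification of the $s$-derivative estimate as the delicate step is apt, and this is indeed where one falls back on the detailed arguments in \cite{Tanabebook1979} or \cite{KatoTanabe1962}.
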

\begin{remark} \label{rmk_for_lem_existence_of_abstract_evolution_operator}
    Assume that $X = L^2_{\overline{\sigma}}$.
    The evolution operator $S(t,s)$ is given by the formula
    \begin{align} \label{eq_formula_for_evolution_operator_S}
        \begin{split}
            S(t,s)
            & = e^{-(t-s)\mathcal{Z}(s)}
            + \int_s^t
                e^{-(t - \tau) \mathcal{Z}(\tau)} R(\tau, s)
            d\tau\\
            & =: e^{-(t-s)\mathcal{Z}(s)}
            + W(t, s),
        \end{split}
    \end{align}
    where $R(t,s)$ is the solution to the integral equation
    \begin{gather} \label{eq_definition_of_R_in_remark}
        \begin{split}
            R(t,s)
            - \int_s^t
                R_1(t,\tau) R(\tau, s)
            d\tau
            = R_1(t,\tau),\\
            R_1(t,s)
            = - (
                \mathcal{Z}(t)
                - \mathcal{Z}(s)
            )e^{-(t - s) \mathcal{Z}(s)},
        \end{split}
    \end{gather}
    for $0 < s < t$.
    Since
    \begin{align*}
        \Vert
            e^{- s \mathcal{Z}(s)}
        \Vert_{L^2_{\overline{\sigma}}(\Omega) \rightarrow L^2_{\overline{\sigma}}(\Omega)}
        & \leq C e^{- \mu_\ast s},\\
        \Vert
            \mathcal{Z}(s) e^{- s \mathcal{Z}(s)}
        \Vert_{L^2_{\overline{\sigma}}(\Omega) \rightarrow L^2_{\overline{\sigma}}(\Omega)}
        & \leq C s^{-1} e^{- \mu_\ast s},\\
        \Vert
            (
                \mathcal{Z}(t)
                - \mathcal{Z}(s)
            )\mathcal{Z}(0)^{-1}
        \Vert_{L^2_{\overline{\sigma}}(\Omega) \rightarrow L^2_{\overline{\sigma}}(\Omega)}
        & \leq C (t - s)^{\alpha},
    \end{align*}
    for some $\mu_\ast>0$ due to $\RealPart \lambda \geq \mu$, we deduce that
    \begin{align*}
        \Vert
            R_1(t, s)
        \Vert_{L^2_{\overline{\sigma}}(\Omega) \rightarrow L^2_{\overline{\sigma}}(\Omega)}
        \leq C_1 (t - s)^{\alpha - 1} e^{- \mu_\ast (t - s)}
    \end{align*}
    for some $t$-independent constant $C>0$.
    For
    \begin{align*}
        R_k (t,s)
        = \int_s^t
            R_1(t,\tau) R_{k-1}(\tau,s)
        d\tau,
    \end{align*}
    by iteration we have
    \begin{align*}
        \Vert
            R_k(t, s)
        \Vert_{L^2_{\overline{\sigma}}(\Omega) \rightarrow L^2_{\overline{\sigma}}(\Omega)}
        \leq (CC_1)^k (t - s)^{k \alpha - 1} \frac{1}{\Gamma(k \alpha)} e^{- \mu_\ast (t - s)}
    \end{align*}
    Since
    \begin{align*}
        R(t, s)
        = \sum_{k=1}^\infty R_k(t,s),
    \end{align*}
    combining with the estimate for $R_k(t,s)$, we see that the series absolutely converge absolutely so that
    \begin{align*}
        \Vert
            R(t, s)
        \Vert_{L^2_{\overline{\sigma}}(\Omega) \rightarrow L^2_{\overline{\sigma}}(\Omega)}
        \leq C (t - s)^{\alpha - 1} e^{- \mu_\ast (t - s)}
    \end{align*}
    where the constant $C>0$ is independent of $t \geq 0$.
    On construction of evolution operator, see the book by Tanabe \cite{Tanabebook1979} and the paper \cite{Tanabe1960} for when the domain is independence of $t$ and \cite{KatoTanabe1962} for when the domain depends on $t$.
    Therefore, we deduce that
    \begin{align*}
        & \Vert
            S(t,s)
        \Vert_{L^2_{\overline{\sigma}}(\Omega) \rightarrow L^2_{\overline{\sigma}}(\Omega)}\\
        & \leq \Vert
            e^{-(t-s)\mathcal{Z}(s)}
        \Vert_{L^2_{\overline{\sigma}}(\Omega) \rightarrow L^2_{\overline{\sigma}}(\Omega)}
        + \left \Vert
            \int_s^t
                e^{-(t - \tau) \mathcal{Z}(\tau)} R(\tau, s)
            d\tau
        \right \Vert_{L^2_{\overline{\sigma}}(\Omega) \rightarrow L^2_{\overline{\sigma}}(\Omega)}\\
        & \leq C e^{- \mu_\ast (t - s)}
        + C \int_s^t
            e^{-\mu_\ast (t - \tau)} (\tau - s)^{\alpha - 1} e^{-\mu_\ast (\tau - s)}
        d\tau\\
        & \leq C e^{- \mu_\ast (t - s)} (
            1
            + (t - s)^{\alpha}
        )\\
        & \leq C e^{- \mu_\ast (t - s)/2}
    \end{align*}
    for some $t$-independent constant $C>0$.
    Moreover, by the formula (\ref{eq_formula_for_evolution_operator_S}) and interpolation, we deduce that
    \begin{align*}
        \Vert
            \mathcal{Z}(t)^{\theta} S(t,s)
        \Vert_{L^2_{\overline{\sigma}}(\Omega) \rightarrow L^2_{\overline{\sigma}}(\Omega)}
        \leq C (t - s)^{- \theta} e^{- \mu_\ast (t - s)/2}
    \end{align*}
    for $\theta \in [0, 1]$.
\end{remark}

\begin{lemma} \label{lem_existence_of_evolution_operator_T_s_t}
    Assume that $v \in C^\theta(0, \infty; H^2(\Omega)^2)$ for $0<\theta<1$.
    Let $\mu$ and $\delta$ satisfy (\ref{eq_condition_mu_and_delta_and_norm_of_v_1}) and (\ref{eq_condition_mu_and_delta_and_norm_of_v_2}).
    Then there exists a solution operator
    \begin{align*}
        T_{\mu, \delta} (t,s): L^2_{\overline{\sigma}}(\Omega)
        \rightarrow D(\mathcal{A}_{\mu, \delta}(t))
        \equiv D(A)
    \end{align*}
    to (\ref{eq_dudt=A_mu_u}) satisfying
    \begin{align*}
        \frac{d}{dt} T_{\mu, \delta} (t,s) V
        = \mathcal{A}_{\mu, \delta}(t) T_{\mu, \delta} (t,s) V, \quad
        t > s > 0, 
    \end{align*}
    and
    \begin{align} \label{eq_estimate_for_T_t_s}
        \begin{split}
            \Vert
                T_{\mu, \delta} (t,s)
            \Vert_{L^2_{\overline{\sigma}}(\Omega) \rightarrow L^2_{\overline{\sigma}}(\Omega)}
            & \leq C e^{- \mu_\ast (t - s)/2},\\
            \Vert
                \partial_t T_{\mu, \delta} (t,s)
            \Vert_{L^2_{\overline{\sigma}}(\Omega) \rightarrow L^2_{\overline{\sigma}}(\Omega)}
            & + \Vert
                \mathcal{A}_{\mu, \delta}(t) T_{\mu, \delta} (t,s)
            \Vert_{L^2_{\overline{\sigma}}(\Omega) \rightarrow L^2_{\overline{\sigma}}(\Omega)}\\
            & \leq C (t - s)^{-1} e^{- \mu_\ast (t - s)/2}
        \end{split}
    \end{align}
    for some constant $C>0$ and $\mu_\ast>0$.
    Moreover, it follows from the interpolation inequality and $t$-independence of $D(\mathcal{A}_{\mu, \delta}(t))$ that
    \begin{align} \label{eq_estimate_for_T_t_s_by_interpolation}
        \Vert
            \mathcal{A}_{\mu, \delta}(t)^{\phi} T_{\mu, \delta} (t,s)
        \Vert_{L^2(\Omega)^2 \rightarrow L^2(\Omega)^2}
        \leq C (t - s)^{-\phi} e^{- \mu_\ast (t - s)/2}
    \end{align}
    for $\phi \in [0,1]$ and some constant $C>0$.
\end{lemma}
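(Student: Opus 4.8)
The plan is to obtain $T_{\mu,\delta}(t,s)$ from the abstract Tanabe--Kato construction of Lemma \ref{lem_existence_of_abstract_evolution_operator}, applied with $X = L^2_{\overline{\sigma}}(\Omega)$ and $\mathcal{Z}(t) = \mathcal{A}_{\mu,\delta}(t)$, and then to read off the decay estimates from the explicit series representation recorded in Remark \ref{rmk_for_lem_existence_of_abstract_evolution_operator}. Two of the three hypotheses of Lemma \ref{lem_existence_of_abstract_evolution_operator} are already in hand: the uniform sectorial resolvent bound on $\mathcal{A}_{\mu,\delta}(t)$ in a half-plane $\RealPart \lambda \geq \mu$, together with analytic-semigroup generation and spectrum located to the right of $\mu_\ast$, is exactly the content of the Proposition preceding the statement; and the domain $D(\mathcal{A}_{\mu,\delta}(t)) = D(A)$ is independent of $t$ by the definition (\ref{eq_definition_of_A_mu_delta}). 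Hence the only point to verify is the H\"older condition (\ref{eq_assumption_for_bilinear_E_continuity}).

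To check it, first I would compute the difference. Since $-P\Delta$ and $\mu P J_\delta$ are $t$-independent and $\varphi_3$ is the fixed linear functional (\ref{eq_manner_varphi_prime_to_varphi}) of $\varphi'$,
\[
    (\mathcal{A}_{\mu,\delta}(t) - \mathcal{A}_{\mu,\delta}(s))\varphi'
    = P\big((u(t)-u(s))\cdot\nabla\varphi'\big)
    + P\big(\varphi\cdot\nabla(v(t)-v(s))\big),
\]
where $u(t)-u(s)$ is determined by $v(t)-v(s)$ in the same way $u$ is by $v$. Running the estimates (\ref{eq_estimate_for_mathcal_B_part1})--(\ref{eq_estimate_for_mathcal_B_part2}) with $v(t)-v(s)$ in place of $v$ and $\varphi'$ in place of $V$ gives
\begin{align*}
    \Vert (\mathcal{A}_{\mu,\delta}(t) - \mathcal{A}_{\mu,\delta}(s))\varphi'\Vert_{L^2(\Omega)}
    & \leq C\Vert v(t)-v(s)\Vert_{H^2(\Omega)}\Vert\varphi'\Vert_{H^{3/2}(\Omega)}\\
    & \leq C\Vert v(t)-v(s)\Vert_{H^2(\Omega)}\Vert\varphi'\Vert_{D(A)},
\end{align*}
and the hypothesis $v \in C^\theta(0,\infty;H^2(\Omega)^2)$ turns the right-hand side into $C\vert t-s\vert^\theta\Vert\varphi'\Vert_{D(A)}$. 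Since the coercivity bound (\ref{eq_lower_bound_for_B_t_varphi_varphi}) together with (\ref{eq_estimate_for_mathcal_B_part3}) and the smallness conditions (\ref{eq_condition_mu_and_delta_and_norm_of_v_1})--(\ref{eq_condition_mu_and_delta_and_norm_of_v_2}) makes $\mathcal{A}_{\mu,\delta}(r)^{-1}\colon L^2_{\overline{\sigma}}(\Omega)\to D(A)$ bounded with a bound independent of $r$, composition yields (\ref{eq_assumption_for_bilinear_E_continuity}) with $\alpha = \theta$ and a constant $K$ independent of $r$ (and of $T$).

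With the hypotheses verified, Lemma \ref{lem_existence_of_abstract_evolution_operator} produces $T_{\mu,\delta}(t,s) := S(t,s)$ solving (\ref{eq_dudt=A_mu_u}) with the differentiation identity and the $(t-s)^{-1}$ bounds of (\ref{eq_properties_for_T_ts}). To upgrade these to the exponentially decaying estimates (\ref{eq_estimate_for_T_t_s}) I would follow the Neumann-series argument of Remark \ref{rmk_for_lem_existence_of_abstract_evolution_operator} verbatim: the semigroup bounds $\Vert e^{-\tau\mathcal{A}_{\mu,\delta}(s)}\Vert \leq Ce^{-\mu_\ast\tau}$ and $\Vert\mathcal{A}_{\mu,\delta}(s)e^{-\tau\mathcal{A}_{\mu,\delta}(s)}\Vert \leq C\tau^{-1}e^{-\mu_\ast\tau}$ from the Proposition, combined with the uniform H\"older estimate just obtained, give $\Vert R_1(t,s)\Vert \leq C(t-s)^{\theta-1}e^{-\mu_\ast(t-s)}$; iterating and summing the series in (\ref{eq_definition_of_R_in_remark}) gives $\Vert R(t,s)\Vert \leq C(t-s)^{\theta-1}e^{-\mu_\ast(t-s)}$ with a $t$-independent constant; and then the formula (\ref{eq_formula_for_evolution_operator_S}) gives $\Vert T_{\mu,\delta}(t,s)\Vert \leq Ce^{-\mu_\ast(t-s)/2}$, with the companion bounds on $\mathcal{A}_{\mu,\delta}(t)T_{\mu,\delta}(t,s)$ and $\partial_t T_{\mu,\delta}(t,s)$ obtained by differentiating the same formula and using $\Vert\mathcal{A}_{\mu,\delta}(\tau)e^{-(t-\tau)\mathcal{A}_{\mu,\delta}(\tau)}\Vert \leq C(t-\tau)^{-1}e^{-\mu_\ast(t-\tau)}$. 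Because every constant here is $T$-independent, the operators built on $[0,T]$ for varying $T$ agree by uniqueness and the estimates are global in $t>s>0$. Finally (\ref{eq_estimate_for_T_t_s_by_interpolation}) follows by interpolating the $\phi = 0$ and $\phi = 1$ cases, using that $D(\mathcal{A}_{\mu,\delta}(t)^\phi)$ is $t$-independent since $D(\mathcal{A}_{\mu,\delta}(t)) = D(A)$ is.

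The main obstacle is the uniform-in-time H\"older bound (\ref{eq_assumption_for_bilinear_E_continuity}): every exponential factor in the conclusion is inherited from the semigroup rate $\mu_\ast$ precisely because the H\"older constant $K$ does not grow with $t$ (or $T$), and this in turn hinges on the product estimates (\ref{eq_estimate_for_mathcal_B_part1})--(\ref{eq_estimate_for_mathcal_B_part3}) being applied to $v(t)-v(s)$ together with the global $H^2$-H\"older control of the base solution $v$. A subsidiary, purely computational point --- already dispatched in Remark \ref{rmk_for_lem_existence_of_abstract_evolution_operator} --- is the bookkeeping of $\mu_\ast$ through the convolution iterates $R_k$, which is what degrades the rate from $\mu_\ast$ to $\mu_\ast/2$.
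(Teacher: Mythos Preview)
Your proposal is correct and follows essentially the same approach as the paper: verify the H\"older condition (\ref{eq_assumption_for_bilinear_E_continuity}) by writing $(\mathcal{A}_{\mu,\delta}(t)-\mathcal{A}_{\mu,\delta}(s))\varphi'$ as the two $v$-dependent transport terms, bound them via (\ref{eq_estimate_for_mathcal_B_part1})--(\ref{eq_estimate_for_mathcal_B_part2}) applied to $v(t)-v(s)$, and then invoke Remark \ref{rmk_for_lem_existence_of_abstract_evolution_operator} for the exponential decay. The paper does the same computation with $\psi'=\mathcal{A}_{\mu,\delta}(0)^{-1}\varphi$ and bounds by $\|\psi'\|_{H^2}$ rather than $\|\varphi'\|_{H^{3/2}}$, but this is cosmetic.
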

\begin{proof}
    We show (\ref{eq_assumption_for_bilinear_E_continuity}).
    It is enough to consider the case that $r=0$.
    Let
    \begin{align*}
        \psi^\prime = \mathcal{A}_{\mu, \delta}(0)^{-1} \varphi \in D(A) \equiv D(\mathcal{A}_{\mu, \delta} (t)) \cap D(\mathcal{A}_{\mu, \delta} (s)), \quad
        \varphi^\prime \in L^2_{\overline{\sigma}}(\Omega).
    \end{align*}
    By definition we observe that
    \begin{align*}
        \mathcal{A}_{\mu, \delta} (t) \psi^\prime
        = (u(t) - u(0)) \cdot \nabla \psi^\prime
        + \psi \cdot (v(t) - v(0))
        + \varphi^\prime.
    \end{align*}
    Therefore, we have
    \begin{align*}
        \mathcal{A}_{\mu, \delta} (t) \psi
        - \mathcal{A}_{\mu, \delta} (s) \psi
        = (u(t) - u(s)) \cdot \nabla \psi^\prime
        + \psi \cdot (v(t) - v(s))
    \end{align*}
    Using (\ref{eq_estimate_for_mathcal_B_part1}) and (\ref{eq_estimate_for_mathcal_B_part2}) we see that
    \begin{align*}
        & \Vert
            \mathcal{A}_{\mu, \delta} (t) \psi
            - \mathcal{A}_{\mu, \delta} (s) \psi
        \Vert_{L^2_{\overline{\sigma}}(\Omega)}\\
        & \leq C (t - s)^\theta \Vert
            \psi
        \Vert_{H^2{(\Omega)}}
        \leq C (t - s)^\theta \Vert
            \varphi
        \Vert_{L^2{(\Omega)}}.
    \end{align*}
    We obtain
    \begin{align} \label{eq_Holder_continuity_of_A_mu_delta_t_over_A_mu_delta_0}
        \Vert
            \mathcal{A}_{\mu, \delta} (t) \mathcal{A}_{\mu, \delta} (0)^{-1}
            - \mathcal{A}_{\mu, \delta} (s) \mathcal{A}_{\mu, \delta} (0)^{-1}
        \Vert_{L^2_{\overline{\sigma}}(\Omega) \rightarrow L^2_{\overline{\sigma}}(\Omega)}
        \leq C (t - s)^\theta.
    \end{align}
    In view of Remark \ref{rmk_for_lem_existence_of_abstract_evolution_operator}, we obtain that
    \begin{align*}
        &\Vert
            T_{\mu, \delta} (t,s)
        \Vert_{L^2_{\overline{\sigma}}(\Omega) \rightarrow L^2_{\overline{\sigma}}(\Omega)}
        \leq C e^{- \mu_\ast (t - s)/2},\\
        & \Vert
            \partial_t T_{\mu, \delta} (t,s)
        \Vert_{L^2_{\overline{\sigma}}(\Omega) \rightarrow L^2_{\overline{\sigma}}(\Omega)}
        + \Vert
            \mathcal{A}_{\mu, \delta}(t) T_{\mu, \delta} (t,s)
        \Vert_{L^2_{\overline{\sigma}}(\Omega) \rightarrow L^2_{\overline{\sigma}}(\Omega)}\\
        & \leq C (t - s)^{-1} e^{- \mu_\ast (t - s)/2},
    \end{align*}
    for some constant $C>0$.
\end{proof}
Thereafter we assume that $\mu$, $\delta$, and $v$ satisfy (\ref{eq_condition_mu_and_delta_and_norm_of_v_1}) and (\ref{eq_condition_mu_and_delta_and_norm_of_v_2}).

\section{Maximal H\"{o}lder regularity of evolution operator and its exponential decay} \label{section_linear_evolution_operator}
\subsection{Supportive estimates}
    We show exponential decay of $T_{\mu, \delta}(t, s)$.
    Although the results in this section are based on Yagi's book \cite{Yagi2010}, the exponential decay is not explicitly addressed in \cite{Yagi2010}.
    In order to prove the main theorem, we clarify this aspect in this section.
    We also focus on the dependence of the basic flow $v$ for estimates of $T_{\mu, \delta}(t, s)$.
    Note that Yagi \cite{Yagi2010} deals with the case where the domain of the operator is time-dependent.
    In our case, the domain is time-independent, and $\mathcal{A}_{\mu, \delta}(t)$ is a perturbation of the hydrostatic Stokes operator, which is a simpler case compared to Yagi's scenario.

    How the constant for bound depends on $\Vert v \Vert_{C^\theta(0, \infty; H^2(\Omega))}$ does not affect the exponential decay itself, but it is useful to know how the deviation of the stability affects $\Vert v \Vert_{C^\theta(0, \infty; H^2(\Omega))}$ when $F_\delta \neq 0$.
    To achieve the maximal H\"{o}lder regularity for $T_{\mu, \delta}(t, s)$, we require the H\"{o}lder regularity to ensure the H\"{o}lder continuity of the resolvent $(\lambda - \mathcal{A}_{\mu, \delta})^{-1}$ in the sense of (\ref{eq_assumption_for_bilinear_E_continuity}) and (\ref{eq_Holder_continuity_of_A_mu_delta_t_over_A_mu_delta_0}).
    In comparison with the $H^1$-convergence, which can be established in the same manner as in \cite{HieberKashiwabara2016, HieberHusseingKashiwabara2016} by replacing the evolution operator $e^{-(t - s) A}$ with $T_{\mu, \delta}(t, s)$, the constants of the $H^2$-bound $C$ in (\ref{eq_decay_estimate_in_main_theorem}) and (\ref{eq_decay_estimate_in_main_theorem2}) worsen by $\Vert v \Vert_{C^\theta(0, \infty; H^2(\Omega))}^2$ as shown in this section.
    Note that the constant of bounds in this section can be taken independent of $T$.

    For a Banach space $X$ and $0 < \sigma < \beta \leq 1$, we define $\mathcal{F}^{\beta, \sigma}(0, T; X)$ as a space of $\sigma$-H\"{o}lder functions such that (i) $\lim_{t \rightarrow 0} t^{1-\beta} \varphi(t)$ makes sense in $X$, (ii) the norm
    \begin{align*}
        & \Vert
            \varphi
        \Vert_{\mathcal{F}^{\beta, \sigma}(0, T; X)}\\
        & := \sup_{0 < t < T} t^{1 - \beta} \Vert
            \varphi(t)
        \Vert_X
        + \sup_{0 < s < t < T} s^{1 - \beta + \sigma}
        \frac{
            \Vert
                \varphi(t)
                - \varphi(s)
            \Vert_X
        }{
            (t - s)^\sigma
        }
    \end{align*}
    is bounded, (iii) $\lim_{t \rightarrow 0} \sup_{0 < s < t } s^{1 - \beta + \sigma} \frac{ \Vert \varphi(t) - \varphi(s) \Vert_X }{(t - s)^\sigma} = 0$.
    We define, in the same manner, the exponentially time-weighted space $\mathcal{F}^{\mu, \beta, \sigma} (0, T; X)$ associated with the norm
    \begin{align*}
        & \Vert
            \varphi
        \Vert_{\mathcal{F}^{\mu, \beta, \sigma}(0, T; X)}\\
        & :=
        \sup_{0 < t < T} e^{\mu } t^{1 - \beta} \Vert
            \varphi(t)
        \Vert_X
        + \sup_{0 < s < t < T} e^{\mu s} s^{1 - \beta + \sigma}
        \frac{
            \Vert
            \varphi(t)
            - \varphi(s)
        \Vert_X
        }{(t - s)^\sigma}.
    \end{align*}
    As in Section 4.8 of the book by Yagi \cite{Yagi2010}, we prove
    \begin{lemma}
        Let $T>0$ and $0 < \sigma < \min (\beta, \theta)$ for $\beta, \theta \in (0,1)$.
        Let
        \begin{align*}
            F
            \in \mathcal{F}^{\beta, \sigma}((0,T]; L^2_{\overline{\sigma}}(\Omega)),\quad
            V_0
            \in D(A^\beta).
        \end{align*}
        Then there exists a solution to
        \begin{align*}
            \frac{dV(t)}{dt} + \mathcal{A}_{\mu, \delta}(t) V(t) = F(t), \quad
            V(0) = V_0,
        \end{align*}
        such that
        \begin{align*}
            & \left \Vert
                \frac{dV(t)}{dt}
            \right \Vert_{L^2(\Omega)}
            + \Vert
                \mathcal{A}_{\mu, \delta}(t) V(t)
            \Vert_{L^2(\Omega)}\\
            & \leq C e^{- \mu_\ast t /4}\left(
                \Vert
                    A^\beta V_0
                \Vert_{X}
                + \Vert
                    F
                \Vert_{\mathcal{F}^{\beta, \sigma}((0,T]; L^2(\Omega))}
            \right).
        \end{align*}
        where $C > 0$ can be taken $t$-independently.
    \end{lemma}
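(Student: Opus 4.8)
The plan is to run the maximal H\"older regularity scheme of Yagi (Section~4.8 of \cite{Yagi2010}) on the variation-of-constants representation
\begin{align*}
    V(t) = T_{\mu,\delta}(t,0)V_0 + \int_0^{t} T_{\mu,\delta}(t,s)F(s)\,ds =: V_h(t)+V_p(t),
\end{align*}
with $T_{\mu,\delta}$ the evolution operator of Lemma~\ref{lem_existence_of_evolution_operator_T_s_t}, while propagating the exponential weight $e^{-\mu_\ast t}$ through every estimate --- the point that \cite{Yagi2010} does not make explicit. Since $\frac{dV(t)}{dt}=F(t)-\mathcal{A}_{\mu,\delta}(t)V(t)$ and $\Vert F(t)\Vert_{L^2}\le t^{\beta-1}\Vert F\Vert_{\mathcal{F}^{\beta,\sigma}}$, it suffices to bound $\mathcal{A}_{\mu,\delta}(t)V_h(t)$ and $\mathcal{A}_{\mu,\delta}(t)V_p(t)$ in $L^2(\Omega)$, together with their $\sigma$-H\"older seminorms in the sense of $\mathcal{F}^{\beta,\sigma}$; that the formula then defines a strict solution is a by-product of these bounds, using $V_0\in D(A^\beta)$ and the continuity of $s\mapsto F(s)$.

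First I would record the $t$-uniform structural estimates on which the evolution-operator machinery rests. Writing $\mathcal{A}_{\mu,\delta}(t)=(\mu PJ_\delta+A)+\mathcal{B}_{\mu,\delta}(t)$ and using the coercivity (\ref{eq_lower_bound_for_B_t_varphi_varphi}), the conditions (\ref{eq_condition_mu_and_delta_and_norm_of_v_1})--(\ref{eq_condition_mu_and_delta_and_norm_of_v_2}), and the relative bound $\Vert\mathcal{B}_{\mu,\delta}(t)\varphi\Vert_{L^2}\le\frac{1}{2}\Vert\varphi\Vert_{D((\mu+A)^{3/4})}$ from Section~\ref{section_perturbed_hydrostatic_stokes_operator}, one gets $D(\mathcal{A}_{\mu,\delta}(t)^\gamma)=D(A^\gamma)$ for $\gamma\in[0,1]$ with $t$-uniform norm equivalence. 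Because $v\in BC^\theta(0,\infty;H^2(\Omega)^2)$ is \emph{bounded} (not merely H\"older), the estimates (\ref{eq_estimate_for_mathcal_B_part1})--(\ref{eq_estimate_for_mathcal_B_part3}) and (\ref{eq_Holder_continuity_of_A_mu_delta_t_over_A_mu_delta_0}) upgrade to
\begin{align*}
    \sup_{s,t}\Vert\mathcal{A}_{\mu,\delta}(t)\mathcal{A}_{\mu,\delta}(s)^{-1}\Vert<\infty,\qquad \Vert(\mathcal{A}_{\mu,\delta}(t)-\mathcal{A}_{\mu,\delta}(s))\mathcal{A}_{\mu,\delta}(r)^{-1}\Vert\le C\vert t-s\vert^{\theta}\quad(0\le r\le s\le t),
\end{align*}
with $T$-independent constants depending on $v$ only through $\Vert v\Vert_{L^\infty(0,\infty;H^2(\Omega))}$ and --- by the loss flagged at the start of Section~\ref{section_linear_evolution_operator} --- through $\Vert v\Vert_{C^\theta(0,\infty;H^2(\Omega))}^2$. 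Combining these with the analytic-semigroup estimates of Section~\ref{section_perturbed_hydrostatic_stokes_operator}, the bounds (\ref{eq_estimate_for_T_t_s})--(\ref{eq_estimate_for_T_t_s_by_interpolation}), and the Volterra iteration of Remark~\ref{rmk_for_lem_existence_of_abstract_evolution_operator} (which preserves the weight $e^{-\mu_\ast(t-s)}$), I would upgrade (\ref{eq_estimate_for_T_t_s_by_interpolation}) to the workhorse inequality
\begin{align*}
    \Vert\mathcal{A}_{\mu,\delta}(t)^{\phi}\,T_{\mu,\delta}(t,s)\,\mathcal{A}_{\mu,\delta}(s)^{-\rho}\Vert_{L^2\to L^2}\le C\,(t-s)^{-(\phi-\rho)}e^{-\mu_\ast(t-s)/2},\qquad 0\le\rho\le\phi\le1.
\end{align*}

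With the workhorse inequality in hand both pieces are routine. For $V_h$, taking $\phi=1$, $\rho=\beta$ gives $\Vert\mathcal{A}_{\mu,\delta}(t)V_h(t)\Vert_{L^2}\le C\,t^{\beta-1}e^{-\mu_\ast t/2}\Vert A^\beta V_0\Vert_{L^2}$, and the $\sigma$-H\"older part of $\mathcal{A}_{\mu,\delta}(\cdot)V_h(\cdot)$ follows from $T_{\mu,\delta}(t,0)-T_{\mu,\delta}(t',0)=\int_{t'}^{t}\partial_\tau T_{\mu,\delta}(\tau,0)\,d\tau$ and the same inequality together with the H\"older bound above. For $V_p$ one subtracts $F(t)$,
\begin{align*}
    \mathcal{A}_{\mu,\delta}(t)V_p(t)=\int_0^{t}\mathcal{A}_{\mu,\delta}(t)T_{\mu,\delta}(t,s)\big(F(s)-F(t)\big)\,ds+\Big(\mathcal{A}_{\mu,\delta}(t)\!\int_0^{t}T_{\mu,\delta}(t,s)\,ds\Big)F(t);
\end{align*}
by the workhorse inequality ($\phi=1,\rho=0$) and the defining weight of $\mathcal{F}^{\beta,\sigma}$ the first integrand is bounded by $C(t-s)^{\sigma-1}e^{-\mu_\ast(t-s)/2}s^{\beta-1-\sigma}\Vert F\Vert_{\mathcal{F}^{\beta,\sigma}}$, whose $s$-integral is of Beta type (finite since $0<\sigma<\beta$) and, after the usual splitting of $[0,t]$ to absorb the polynomial prefactor, is $\le C\,t^{\beta-1}e^{-\mu_\ast t/4}$ once $F$ carries the exponential weight $e^{\mu_\ast s/2}$ supplied by the nonlinearity of Section~\ref{section_nonlinear_problems}; the boundary term is controlled via $-\partial_sT_{\mu,\delta}(t,s)=T_{\mu,\delta}(t,s)\mathcal{A}_{\mu,\delta}(s)$, the $t$-uniform boundedness of $\mathcal{A}_{\mu,\delta}(t)\mathcal{A}_{\mu,\delta}(s)^{-1}$, and the endpoint regularization of \cite{Yagi2010}. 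The H\"older seminorm of $V_p$ is handled by the same splitting; then $\frac{dV(t)}{dt}=F(t)-\mathcal{A}_{\mu,\delta}(t)V(t)$ inherits every bound, and reading the intrinsic time-weights $t^{1-\beta}$, $t^{1-\beta+\sigma}$ of $\mathcal{F}^{\beta,\sigma}$ into the left-hand side yields the stated estimate with rate $\mu_\ast/4$ and a $T$-independent constant.

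The hard part is the structural step just described: one must verify that all hypotheses of the evolution-operator construction hold \emph{with the exponential shift already built in} --- analyticity of each $\mathcal{A}_{\mu,\delta}(t)$ on a fixed sector $\Sigma_{\theta,\mu}$ with resolvent decay uniform in $t$ and in $T$, and uniform H\"older continuity of $\mathcal{A}_{\mu,\delta}(\cdot)\mathcal{A}_{\mu,\delta}(0)^{-1}$ --- and that every convolution occurring in the iteration, of the type
\begin{align*}
    \int_s^{t}(t-\tau)^{a-1}(\tau-s)^{b-1}e^{-\mu_\ast(t-\tau)}e^{-\mu_\ast(\tau-s)}\,d\tau\le C\,(t-s)^{a+b-1}e^{-\mu_\ast(t-s)}\le C\,e^{-\mu_\ast(t-s)/2},
\end{align*}
retains the exponential weight; this is precisely where the boundedness of $v$ and the smallness conditions (\ref{eq_condition_mu_and_delta_and_norm_of_v_1})--(\ref{eq_condition_mu_and_delta_and_norm_of_v_2}) are indispensable. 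Everything downstream is the standard maximal H\"older regularity computation, now carried out with exponentially weighted kernels.
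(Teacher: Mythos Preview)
Your proposal is correct and follows essentially the same route as the paper: the variation-of-constants formula with the evolution operator $T_{\mu,\delta}$, Yagi's maximal H\"older scheme, and the key bookkeeping of the exponential weight $e^{-\mu_\ast(t-s)}$ through every convolution. The paper's proof is simply the unpacked version of your sketch --- it first develops a catalogue of supportive estimates (Propositions on $\mathcal{A}_{\mu,\delta}(t)^\phi e^{-\tau\mathcal{A}_{\mu,\delta}(s)}\mathcal{A}_{\mu,\delta}(s)^{-\psi}$, on $T_{\mu,\delta}-e^{-(t-s)\mathcal{A}_{\mu,\delta}(t)}$, etc.) and then assembles them in the proof of Lemma~\ref{lem_maximal_regularity_estimate_in_F_nu_beta_sigma}, which is the fully weighted version of the present lemma. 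Two small points of comparison: (i) what you call ``endpoint regularization'' of the boundary term $\mathcal{A}_{\mu,\delta}(t)\int_0^t T_{\mu,\delta}(t,s)\,ds\,F(t)$ is made explicit in the paper by splitting off the frozen-coefficient semigroup, writing this as $\int_0^t\mathcal{A}_{\mu,\delta}(t)\big(T_{\mu,\delta}(t,\tau)-e^{-(t-\tau)\mathcal{A}_{\mu,\delta}(t)}\big)\,d\tau\,F(t)+(I-e^{-t\mathcal{A}_{\mu,\delta}(t)})F(t)$, which is precisely the device your $\partial_s T$ argument would need anyway since $\mathcal{A}_{\mu,\delta}(t)$ and $T_{\mu,\delta}(t,s)$ do not commute; (ii) for the H\"older seminorm the paper does not use your integral formula $T(t,0)-T(t',0)=\int_{t'}^t\partial_\tau T(\tau,0)\,d\tau$ (which is awkward because the prefactor $\mathcal{A}_{\mu,\delta}(\cdot)$ also moves) but instead the evolution property $T(t,0)=T(t,s)T(s,0)$ together with the commutator bound $\Vert\mathcal{A}_{\mu,\delta}(t)T_{\mu,\delta}(t,s)\mathcal{A}_{\mu,\delta}(s)^{-1}-e^{-(t-s)\mathcal{A}_{\mu,\delta}(s)}\Vert\le C(t-s)^\theta e^{-\mu_\ast(t-s)}$. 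You also correctly flag that the exponential decay in the conclusion genuinely requires $F$ to carry an exponential weight, which the paper makes explicit in the hypothesis $F\in\mathcal{F}^{\nu,\beta,\sigma}$ of Lemma~\ref{lem_maximal_regularity_estimate_in_F_nu_beta_sigma}.
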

    We use this theorem for $\beta = 1/2$, namely $V_0 \in H^1(\Omega)^2$.
    \subsubsection{Estimates for the semigroup}
    \begin{proposition} \label{prop_diff_estimate_with_resolvlents_at_t_s}
        Let $t > s > 0$ and $\lambda \in \Complex \setminus \overline{\Sigma_{\phi, \mu_\ast}}$ for the spectral angle $\phi \in (0, \pi/2)$.
        Then
        \begin{align*}
            \Vert
                (\lambda - \mathcal{A}_{\mu, \delta}(t))^{-1}
                - (\lambda - \mathcal{A}_{\mu, \delta}(s))^{-1}
            \Vert_{L^2_{\overline{\sigma}} \rightarrow L^2_{\overline{\sigma}}}
            \leq \frac{
                C \min (
                    1,
                    \vert
                        t - s
                    \vert^{\theta}
                )
            }{
                \vert
                    \lambda
                \vert
            }.
        \end{align*}
    \end{proposition}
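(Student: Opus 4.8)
The plan is to combine the second resolvent identity with a uniform-in-$t,s$ H\"{o}lder bound for the normalized operator $\mathcal{A}_{\mu, \delta}(t)\mathcal{A}_{\mu, \delta}(s)^{-1}$. Abbreviating $\mathcal{A}(\cdot) = \mathcal{A}_{\mu, \delta}(\cdot)$ and using that the domain $D(\mathcal{A}(t)) = D(\mathcal{A}(s)) = D(A)$ is independent of $t$, I would start from
\begin{align*}
    & (\lambda - \mathcal{A}(t))^{-1} - (\lambda - \mathcal{A}(s))^{-1}\\
    & \quad = (\lambda - \mathcal{A}(t))^{-1}\bigl(\mathcal{A}(t) - \mathcal{A}(s)\bigr)(\lambda - \mathcal{A}(s))^{-1}\\
    & \quad = (\lambda - \mathcal{A}(t))^{-1}\bigl[\mathcal{A}(t)\mathcal{A}(s)^{-1} - I\bigr]\mathcal{A}(s)(\lambda - \mathcal{A}(s))^{-1},
\end{align*}
and estimate the three factors separately. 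The first is bounded uniformly in $t$ by the resolvent estimate $\Vert(\lambda - \mathcal{A}(t))^{-1}\Vert_{L^2_{\overline{\sigma}} \rightarrow L^2_{\overline{\sigma}}} \leq K/(1 + \vert\lambda\vert)$ for $\lambda \notin \overline{\Sigma_{\phi, \mu_\ast}}$, which is available since $\mathcal{A}_{\mu, \delta}(t)$ generates an analytic semigroup with spectrum in $\Sigma_{\phi, \mu_\ast}$ (Section \ref{section_perturbed_hydrostatic_stokes_operator}; this is the hypothesis of Lemma \ref{lem_existence_of_abstract_evolution_operator}). The last factor is handled by the same bound after writing $\mathcal{A}(s)(\lambda - \mathcal{A}(s))^{-1} = \lambda(\lambda - \mathcal{A}(s))^{-1} - I$, which gives $\Vert\mathcal{A}(s)(\lambda - \mathcal{A}(s))^{-1}\Vert \leq 1 + K$. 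The real content is the middle factor: the proposition reduces to the uniform H\"{o}lder estimate
\begin{equation} \label{eq_uniform_holder_normalized}
    \Vert\mathcal{A}(t)\mathcal{A}(s)^{-1} - I\Vert_{L^2_{\overline{\sigma}} \rightarrow L^2_{\overline{\sigma}}} \leq C\vert t - s\vert^\theta,
\end{equation}
with $C$ independent of $s, t$, i.e.\ the version of (\ref{eq_Holder_continuity_of_A_mu_delta_t_over_A_mu_delta_0}) with a moving base point.

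Granting (\ref{eq_uniform_holder_normalized}), the remainder is bookkeeping. Multiplying the three bounds and using $1 + \vert\lambda\vert \geq \vert\lambda\vert$ gives
\begin{align*}
    \Vert(\lambda - \mathcal{A}(t))^{-1} - (\lambda - \mathcal{A}(s))^{-1}\Vert \leq \frac{K}{1 + \vert\lambda\vert}\cdot C\vert t - s\vert^\theta\cdot(1 + K) \leq \frac{C\vert t - s\vert^\theta}{\vert\lambda\vert}.
\end{align*}
On the other hand, the triangle inequality together with the resolvent bound gives the $t$-free estimate $\Vert(\lambda - \mathcal{A}(t))^{-1} - (\lambda - \mathcal{A}(s))^{-1}\Vert \leq 2K/(1 + \vert\lambda\vert) \leq 2K/\vert\lambda\vert$. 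Taking the minimum of the two bounds produces the factor $\min(1, \vert t - s\vert^\theta)$ in the statement.

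The main obstacle is (\ref{eq_uniform_holder_normalized}). Here I would use that in $\mathcal{A}(t) = (\mu + A) + \mathcal{B}_{\mu, \delta}(t)$ the hydrostatic Stokes part $-P\Delta$ and the part $\mu P J_\delta$ are $t$-independent, and so is the $\mu P K_\delta$ term inside $\mathcal{B}_{\mu, \delta}(t)$; hence, in the notation (\ref{eq_manner_varphi_prime_to_varphi}),
\begin{align*}
    (\mathcal{A}(t) - \mathcal{A}(s))\varphi^\prime = P\bigl((u(t) - u(s))\cdot\nabla\varphi^\prime\bigr) + P\bigl(\varphi\cdot\nabla(v(t) - v(s))\bigr).
\end{align*}
Applying (\ref{eq_estimate_for_mathcal_B_part1}) and (\ref{eq_estimate_for_mathcal_B_part2}) with $v$ replaced by $v(t) - v(s)$ gives $\Vert(\mathcal{A}(t) - \mathcal{A}(s))\varphi^\prime\Vert_{L^2(\Omega)} \leq C\Vert v(t) - v(s)\Vert_{H^2(\Omega)}\Vert\varphi^\prime\Vert_{H^{3/2}(\Omega)}$. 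Taking $\varphi^\prime = \mathcal{A}(s)^{-1}\psi$ for $\psi \in L^2_{\overline{\sigma}}(\Omega)$ and using $D(A^{3/4}) \hookrightarrow H^{3/2}(\Omega)^2$, the delicate point is the uniform-in-$s$ bound $\Vert A^{3/4}\mathcal{A}(s)^{-1}\Vert_{L^2 \rightarrow L^2} \leq C$. This I would obtain from the perturbation argument behind (\ref{eq_condition_mu_and_delta_and_norm_of_v_2}): for $W = \mathcal{A}(s)^{-1}\psi \in D(A)$ one has $(\mu + A)W = \psi - \mathcal{B}_{\mu, \delta}(s)W$, hence $(\mu + A)^{3/4}W = (\mu + A)^{-1/4}(\psi - \mathcal{B}_{\mu, \delta}(s)W)$, and since $\Vert(\mu + A)^{-1/4}\Vert \leq \mu^{-1/4}$ and $\Vert\mathcal{B}_{\mu, \delta}(s)W\Vert_{L^2} \leq \frac{1}{2}\Vert(\mu + A)^{3/4}W\Vert_{L^2}$, absorbing the last term gives $\Vert(\mu + A)^{3/4}\mathcal{A}(s)^{-1}\Vert_{L^2 \rightarrow L^2} \leq C$ uniformly in $s$ for $\mu \geq 1$; boundedness of $A^{3/4}(\mu + A)^{-3/4}$ then yields the bound for $A^{3/4}\mathcal{A}(s)^{-1}$. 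Combining, $\Vert\mathcal{A}(t)\mathcal{A}(s)^{-1} - I\Vert \leq C\Vert v(t) - v(s)\Vert_{H^2(\Omega)} \leq C\vert t - s\vert^\theta$ by the H\"{o}lder continuity $v \in C^\theta(0, \infty; H^2(\Omega)^2)$, with a constant depending only on $\mu, \delta$ and $\Vert v\Vert_{C^\theta(0, \infty; H^2(\Omega)^2)}$. This is (\ref{eq_uniform_holder_normalized}) and completes the proof.
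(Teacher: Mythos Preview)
Your proof is correct and follows essentially the same route as the paper: the second resolvent identity factored as $(\lambda - \mathcal{A}(t))^{-1}D_{\mu,\delta}(t,s)\mathcal{A}(s)(\lambda - \mathcal{A}(s))^{-1}$, combined with the resolvent bound and the H\"{o}lder estimate (\ref{eq_Holder_continuity_of_A_mu_delta_t_over_A_mu_delta_0}) for $D_{\mu,\delta}(t,s) = -(\mathcal{A}(t)-\mathcal{A}(s))\mathcal{A}(s)^{-1}$. You supply more detail than the paper's terse proof---in particular the moving-base-point version of (\ref{eq_Holder_continuity_of_A_mu_delta_t_over_A_mu_delta_0}) via a uniform bound on $A^{3/4}\mathcal{A}(s)^{-1}$, and the triangle-inequality argument producing the $\min(1,\vert t-s\vert^\theta)$ factor---but the underlying argument is the same.
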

    \begin{proof}
        It is deduced from the resolvent estimate, the estimate (\ref{eq_Holder_continuity_of_A_mu_delta_t_over_A_mu_delta_0}), and the formula
        \begin{align*}
            & (\lambda - \mathcal{A}_{\mu, \delta}(t))^{-1}
            - (\lambda - \mathcal{A}_{\mu, \delta}(s))^{-1}\\
            & = (
                \lambda - \mathcal{A}_{\mu, \delta}(t)
            )^{-1}
            D_{\mu, \delta}(t, s) \mathcal{A}_{\mu, \delta}(s)
            (
                \lambda - \mathcal{A}_{\mu, \delta}(s)
            )^{-1},
        \end{align*}
        where
        \begin{align*}
            D_{\mu, \delta}(t,s)
            & := - (\mathcal{A}_{\mu, \delta}(t) - \mathcal{A}_{\mu, \delta}(s)) \mathcal{A}_{\mu, \delta}(s)^{-1}\\
            & = \mathcal{A}_{\mu, \delta}(t) (\mathcal{A}_{\mu, \delta}(t)^{-1} - \mathcal{A}_{\mu, \delta}(s)^{-1}).
        \end{align*}
    \end{proof}

    \begin{proposition}\label{prop_estimate_for_At_phi_As_phi_inverse}
        Let $0 < \phi < 1$ and $0 < s, t < \infty$.
        Then
        \begin{align} \label{eq_estimate_for_At_phi_As_phi_inverse}
            \Vert
                \mathcal{A}_{\mu, \delta}(t)^\phi \mathcal{A}_{\mu, \delta}(s)^{-\phi}
            \Vert_{L^2_{\overline{\sigma}} \rightarrow L^2_{\overline{\sigma}}}
            \leq C,
        \end{align}
        where
        \begin{align*}
            C = O(
                \Vert
                    v
                \Vert_{L^\infty(0,T; H^2(\Omega)^2)}
            ).
        \end{align*}
    \end{proposition}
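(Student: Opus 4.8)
The plan is to compare the fractional powers of $\mathcal{A}_{\mu,\delta}(t)$ and $\mathcal{A}_{\mu,\delta}(s)$ directly, exploiting that their difference is a perturbation of order $3/4$ relative to the hydrostatic Stokes operator. Writing $\mathcal{A}_{\mu,\delta}(r) = (\mu + A) + \mathcal{B}_{\mu,\delta}(r)$, the first step is the observation that, since $\Vert \varphi^\prime \Vert_{H^{3/2}(\Omega)} \leq C \Vert (\mu+A)^{3/4} \varphi^\prime \Vert_{L^2(\Omega)}$, the estimates (\ref{eq_estimate_for_mathcal_B_part1})--(\ref{eq_estimate_for_mathcal_B_part3}) applied with $v$ replaced by $v(t) - v(s)$ give, uniformly in $s, t > 0$,
\begin{align*}
    \Vert
        (\mathcal{A}_{\mu,\delta}(t) - \mathcal{A}_{\mu,\delta}(s))(\mu + A)^{-3/4}
    \Vert_{L^2_{\overline{\sigma}} \rightarrow L^2_{\overline{\sigma}}}
    \leq C \Vert v(t) - v(s) \Vert_{H^2(\Omega)}
    \leq C \Vert v \Vert_{L^\infty(0, \infty; H^2(\Omega)^2)},
\end{align*}
with $C$ a fixed combination of embedding constants. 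This inequality is what produces the asserted dependence $O(\Vert v \Vert_{L^\infty(0,\infty;H^2)})$.

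The second step is two resolvent bounds, uniform in $r > 0$. From the uniform sectoriality and invertibility of $\mathcal{A}_{\mu,\delta}(r)$ proved in Section \ref{section_perturbed_hydrostatic_stokes_operator} and the moment inequality, $\Vert \mathcal{A}_{\mu,\delta}(r)^\phi (\lambda + \mathcal{A}_{\mu,\delta}(r))^{-1} \Vert \leq C(1+\lambda)^{\phi - 1}$ for $\lambda \geq 0$. Since $\mu + A$ is positive self-adjoint, $\Vert (\mu+A)^{3/4}(\lambda + \mu + A)^{-1} \Vert \leq C(1+\lambda)^{-1/4}$; feeding this into the resolvent identity
\begin{align*}
    (\lambda + \mathcal{A}_{\mu,\delta}(r))^{-1}
    = (\lambda + \mu + A)^{-1}
    - (\lambda + \mu + A)^{-1} \mathcal{B}_{\mu,\delta}(r) (\lambda + \mathcal{A}_{\mu,\delta}(r))^{-1},
\end{align*}
factoring $\mathcal{B}_{\mu,\delta}(r) = [\mathcal{B}_{\mu,\delta}(r)(\mu+A)^{-3/4}](\mu+A)^{3/4}$ with the small relative bound $\Vert \mathcal{B}_{\mu,\delta}(r)(\mu+A)^{-3/4} \Vert \leq 1/2$ from (\ref{eq_condition_mu_and_delta_and_norm_of_v_2}), and bootstrapping in $\lambda$ (the bounded-$\lambda$ regime being absorbed into the uniform bound $\Vert (\mu+A)^{3/4} \mathcal{A}_{\mu,\delta}(r)^{-1} \Vert \leq C$, itself a consequence of $\Vert (\mu+A) \mathcal{A}_{\mu,\delta}(r)^{-1}\Vert \leq 2$), one obtains $\Vert (\mu+A)^{3/4}(\lambda + \mathcal{A}_{\mu,\delta}(r))^{-1} \Vert \leq C(1+\lambda)^{-1/4}$ uniformly in $r$. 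Combined with the first step this upgrades the decay rate in Proposition \ref{prop_diff_estimate_with_resolvlents_at_t_s} from $(1+|\lambda|)^{-1}$ to $(1+|\lambda|)^{-5/4}$, which is the decisive point below.

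For the third step I would subtract the identity \emph{before} invoking the Balakrishnan representation, so that the integrals converge absolutely:
\begin{align*}
    \mathcal{A}_{\mu,\delta}(t)^\phi \mathcal{A}_{\mu,\delta}(s)^{-\phi} - I
    = \mathcal{A}_{\mu,\delta}(t)^\phi \left( \mathcal{A}_{\mu,\delta}(s)^{-\phi} - \mathcal{A}_{\mu,\delta}(t)^{-\phi} \right)
    = \frac{\sin \pi \phi}{\pi} \int_0^\infty \lambda^{-\phi} \, \mathcal{A}_{\mu,\delta}(t)^\phi G(\lambda) \, d\lambda,
\end{align*}
where $G(\lambda) = (\lambda + \mathcal{A}_{\mu,\delta}(t))^{-1}(\mathcal{A}_{\mu,\delta}(t) - \mathcal{A}_{\mu,\delta}(s))(\lambda + \mathcal{A}_{\mu,\delta}(s))^{-1}$. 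Inserting $(\mu+A)^{-3/4}(\mu+A)^{3/4}$ in $\mathcal{A}_{\mu,\delta}(t)^\phi G(\lambda)$ and using the three bounds above, $\Vert \mathcal{A}_{\mu,\delta}(t)^\phi G(\lambda) \Vert \leq C \Vert v \Vert_{L^\infty(0,\infty;H^2)} (1+\lambda)^{\phi-1}(1+\lambda)^{-1/4}$, so $\lambda^{-\phi} \Vert \mathcal{A}_{\mu,\delta}(t)^\phi G(\lambda) \Vert$ is of order $\lambda^{-\phi}$ near $\lambda = 0$ (integrable since $\phi < 1$) and of order $\lambda^{-5/4}$ near $\lambda = \infty$ (integrable). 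This both legitimizes moving the closed operator $\mathcal{A}_{\mu,\delta}(t)^\phi$ through the integral and shows $D(\mathcal{A}_{\mu,\delta}(s)^\phi) \subseteq D(\mathcal{A}_{\mu,\delta}(t)^\phi)$; hence $\Vert \mathcal{A}_{\mu,\delta}(t)^\phi \mathcal{A}_{\mu,\delta}(s)^{-\phi} \Vert \leq 1 + C \Vert v \Vert_{L^\infty(0,\infty;H^2(\Omega)^2)}$, which is (\ref{eq_estimate_for_At_phi_As_phi_inverse}).

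I expect the second step to be the main obstacle: the extra decay factor $(1+\lambda)^{-1/4}$ is exactly what makes the $\lambda$-integral converge at infinity for \emph{every} $\phi \in (0,1)$ (without it the integral would diverge for $\phi \geq 1/4$), and it is available only because $\mathcal{B}_{\mu,\delta}$ is genuinely of relative order $3/4 < 1$, i.e.\ it rests on the $H^{3/2}$-type bounds (\ref{eq_estimate_for_mathcal_B_part1})--(\ref{eq_estimate_for_mathcal_B_part3}) and the smallness of the relative bound secured by (\ref{eq_condition_mu_and_delta_and_norm_of_v_2}). The remaining operator bookkeeping (domains of fractional powers, the interchange of $\mathcal{A}_{\mu,\delta}(t)^\phi$ with the integral, the estimate $\Vert (\mu+A)^{3/4}\mathcal{A}_{\mu,\delta}(r)^{-1}\Vert \leq C$) is routine but must be done carefully so every constant stays independent of $s$, $t$, and $T$.
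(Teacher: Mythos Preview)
Your proof is correct and follows essentially the same route as the paper: both subtract the identity, invoke the Balakrishnan representation for fractional powers, reduce to a resolvent difference, and exploit that $\mathcal{A}_{\mu,\delta}(t)-\mathcal{A}_{\mu,\delta}(s)$ is of relative order $3/4$ to gain the extra $(1+\lambda)^{-1/4}$ decay that makes the $\lambda$-integral converge for every $\phi\in(0,1)$. The only differences are cosmetic: the paper inserts $\mathcal{A}_{\mu,\delta}(s)^{\pm 3/4}$ rather than $(\mu+A)^{\pm 3/4}$, which makes your second-step bootstrap unnecessary (the bound $\Vert \mathcal{A}_{\mu,\delta}(s)^{3/4-\phi}(\rho+\mathcal{A}_{\mu,\delta}(s))^{-1}\Vert\leq C(\mu_\ast+\rho)^{-1/4-\phi}$ follows directly from sectoriality of $\mathcal{A}_{\mu,\delta}(s)$ itself), and it uses the positive-power formula $\mathcal{A}^\phi x=\frac{\sin\pi\phi}{\pi}\int_0^\infty \rho^{\phi-1}(\rho+\mathcal{A})^{-1}\mathcal{A}x\,d\rho$ applied to $(\mathcal{A}_{\mu,\delta}(s)^\phi-\mathcal{A}_{\mu,\delta}(t)^\phi)\mathcal{A}_{\mu,\delta}(s)^{-\phi}$ rather than your negative-power formula applied to $\mathcal{A}_{\mu,\delta}(t)^\phi(\mathcal{A}_{\mu,\delta}(s)^{-\phi}-\mathcal{A}_{\mu,\delta}(t)^{-\phi})$. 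Your version trades the extra resolvent-identity argument for working only with the self-adjoint $(\mu+A)$, which is arguably cleaner; the paper's version is shorter but implicitly uses $D(\mathcal{A}_{\mu,\delta}(s)^{3/4})\hookrightarrow H^{3/2}$.
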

    \begin{proof}
        Let $x \in D(\mathcal{A}_{\mu, \delta}(t)) \equiv D(\mathcal{A}(0))$.
        We first show that
        \begin{align}\label{eq_At}
            \Vert
                (
                    \mathcal{A}_{\mu, \delta}(s)^\phi - \mathcal{A}_{\mu, \delta}(t)^\phi
                ) \mathcal{A}_{\mu, \delta}(s)^{-\phi}
            \Vert_{L^2_{\overline{\sigma}}(\Omega) \rightarrow L^2_{\overline{\sigma}}(\Omega)}
            \leq C \min(1, (t - s)^\theta)
        \end{align}
        for $C = O(\Vert v \Vert_{L^\infty(0,T; H^2(\Omega)^2)})$.
        We invoke the formula
        \begin{align*}
            \mathcal{A}_{\mu, \delta}(t)^\phi x
            = - \frac{\sin (\pi \phi)}{\pi} \int_0^\infty
                \rho^{\phi - 1} (\rho + \mathcal{A}_{\mu, \delta}(t))^{-1} \mathcal{A}_{\mu, \delta}(t) x
            d\rho.
        \end{align*}
        Therefore
        \begin{align*}
            & (
                \mathcal{A}_{\mu, \delta}(s)^\phi - \mathcal{A}_{\mu, \delta}(t)^\phi
            ) \mathcal{A}_{\mu, \delta}(s)^{-\phi} x\\
            & = - \frac{\sin (\pi \phi)}{\pi} \int_0^\infty
                \rho^{\phi} (
                    \rho + \mathcal{A}_{\mu, \delta}(t)
                )^{-1} (
                    \mathcal{A}_{\mu, \delta}(s) - \mathcal{A}_{\mu, \delta}(t)
                )\\
            & \quad\quad\quad\quad\quad\quad\quad\quad\quad
                \times (
                    \rho + \mathcal{A}_{\mu, \delta}(s)
                )^{-1} \mathcal{A}_{\mu, \delta}(s)^{-\phi}
            d\rho.
        \end{align*}
        By definition of $\mathcal{A}_{\mu, \delta}(t)$, we deduce that
        \begin{align*}
            & \Vert
                (\mathcal{A}_{\mu, \delta}(t) - \mathcal{A}_{\mu, \delta}(s)) y
            \Vert_{L^2_{\overline{\sigma}}}\\
            & \leq C \Vert
                u
            \Vert_{L^\infty(0,T; H^2(\Omega)^2)}
            \min(1, (t - s)^\theta)
            \Vert
                \mathcal{A}_{\mu, \delta}(s)^{3/4} y
            \Vert_{L^2_{\overline{\sigma}}}
        \end{align*}
        for $y \in D(\mathcal{A}_{\mu, \delta}(s)^{3/4})$.
        Therefore, we find from Proposition \ref{prop_diff_estimate_with_resolvlents_at_t_s} that
        \begin{align} \label{eq_estimate_for_At_phi_minus_As_phi_As_phi_inverse}
            \begin{split}
                & \biggl \Vert
                    \int_0^\infty
                        \rho^{\phi} (
                            \rho + \mathcal{A}_{\mu, \delta}(t)
                        )^{-1} (
                            \mathcal{A}_{\mu, \delta}(s) - \mathcal{A}_{\mu, \delta}(t)
                        )\\
                & \quad\quad\quad\quad\quad\quad\quad\quad
                        \times (
                            \rho + \mathcal{A}_{\mu, \delta}(s)
                        )^{-1} \mathcal{A}_{\mu, \delta}(s)^{-\phi}
                    d\rho
                \biggr \Vert_{L^2_{\overline{\sigma}}}\\
                & = \biggl \Vert
                    \int_0^\infty
                        \rho^{\phi} (
                            \rho + \mathcal{A}_{\mu, \delta}(t)
                        )^{-1} (
                            \mathcal{A}_{\mu, \delta}(s) - \mathcal{A}_{\mu, \delta}(t)
                        ) \mathcal{A}_{\mu, \delta}(s)^{-3/4}\\
                    & \quad\quad\quad\quad\quad\quad\quad\quad\quad\quad\quad\quad
                            \times\mathcal{A}_{\mu, \delta}(s)^{3/4 - \phi} (
                            \rho + \mathcal{A}_{\mu, \delta}(s)
                        )^{-1}
                    d\rho
                \biggr \Vert_{L^2_{\overline{\sigma}}}\\
                & \leq C \min(1, (t - s)^\theta) \\
                &\quad\quad \times \int_0^\infty
                    \max(\rho^{\phi} (\mu_\ast + \rho)^{-5/4 -\phi}, \rho^{\phi} (\mu_\ast + \rho)^{-2})
                d\rho \Vert
                    x
                \Vert_{L^2_{\overline{\sigma}}}\\
                & \leq C \min(1, (t - s)^\theta) \Vert
                    x
                \Vert_{L^2_{\overline{\sigma}}},
            \end{split}
        \end{align}
        where $C = O(\Vert v \Vert_{L^\infty(0,T; H^2(\Omega)^2)})$.
        Using density argument and the formula
        \begin{align*}
            \mathcal{A}_{\mu, \delta}(t)^{\phi} \mathcal{A}_{\mu, \delta}(s)^{-\phi}
            = I - (\mathcal{A}_{\mu, \delta}(s)^\phi - \mathcal{A}_{\mu, \delta}(t)^\phi)\mathcal{A}_{\mu, \delta}(s)^{-\phi},
        \end{align*}
        we obtain (\ref{eq_estimate_for_At_phi_As_phi_inverse}).
    \end{proof}

    \begin{proposition} \label{prop_eq_estimate_for_At_phi_minus_As_phi_As_phi_inverse_etsAs}
        Let $0 < \phi < 1$.
        Then
        \begin{align*}
            & \Vert
                \mathcal{A}_{\mu, \delta}(t)^\phi e^{- \tau \mathcal{A}_{\mu, \delta}(s)} \mathcal{A}_{\mu, \delta}(s)^{-\phi}
                - e^{-\tau \mathcal{A}_{\mu, \delta}(s)}
            \Vert_{L^2_{\overline{\sigma}} \rightarrow L^2_{\overline{\sigma}}}\\
            & \leq C \min(1, (t - s)^\theta) e^{- \mu_\ast (t - s)},
        \end{align*}
    \end{proposition}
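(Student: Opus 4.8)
The plan is to reduce the statement to two facts already in hand: the bound (\ref{eq_At}) obtained inside the proof of Proposition~\ref{prop_estimate_for_At_phi_As_phi_inverse}, and the exponential decay of the analytic semigroup generated by $\mathcal{A}_{\mu, \delta}(s)$ (the Proposition stating that $\mathcal{A}_{\mu, \delta}(t)$ generates an analytic semigroup with $\Vert e^{-t \mathcal{A}_{\mu, \delta}(s)} \Vert \leq C e^{-\mu_\ast t}$). The key algebraic observation is that $e^{-\tau \mathcal{A}_{\mu, \delta}(s)}$ and $\mathcal{A}_{\mu, \delta}(s)^{-\phi}$ are both bounded functions of the single sectorial operator $\mathcal{A}_{\mu, \delta}(s)$ and therefore commute, while $\mathcal{A}_{\mu, \delta}(s)^{-\phi}$ maps $L^2_{\overline{\sigma}}(\Omega)$ boundedly into $D(\mathcal{A}_{\mu, \delta}(s)^\phi) = D(\mathcal{A}_{\mu, \delta}(t)^\phi)$ by Proposition~\ref{prop_estimate_for_At_phi_As_phi_inverse}, so the triple product is a well-defined bounded operator on $L^2_{\overline{\sigma}}(\Omega)$.

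First I would commute $e^{-\tau \mathcal{A}_{\mu, \delta}(s)}$ past $\mathcal{A}_{\mu, \delta}(s)^{-\phi}$ to get
\begin{align*}
    \mathcal{A}_{\mu, \delta}(t)^\phi e^{-\tau \mathcal{A}_{\mu, \delta}(s)} \mathcal{A}_{\mu, \delta}(s)^{-\phi}
    - e^{-\tau \mathcal{A}_{\mu, \delta}(s)}
    = \bigl( \mathcal{A}_{\mu, \delta}(t)^\phi \mathcal{A}_{\mu, \delta}(s)^{-\phi} - I \bigr) e^{-\tau \mathcal{A}_{\mu, \delta}(s)},
\end{align*}
and then estimate the two factors separately. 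For the first factor, the identity $\mathcal{A}_{\mu, \delta}(t)^\phi \mathcal{A}_{\mu, \delta}(s)^{-\phi} - I = -(\mathcal{A}_{\mu, \delta}(s)^\phi - \mathcal{A}_{\mu, \delta}(t)^\phi) \mathcal{A}_{\mu, \delta}(s)^{-\phi}$ together with (\ref{eq_At}) gives
\begin{align*}
    \bigl\Vert \mathcal{A}_{\mu, \delta}(t)^\phi \mathcal{A}_{\mu, \delta}(s)^{-\phi} - I \bigr\Vert_{L^2_{\overline{\sigma}} \to L^2_{\overline{\sigma}}}
    \leq C \min(1, (t - s)^\theta),
\end{align*}
with $C = O(\Vert v \Vert_{L^\infty(0, T; H^2(\Omega)^2)})$. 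For the second factor, the analytic-semigroup bound yields $\Vert e^{-\tau \mathcal{A}_{\mu, \delta}(s)} \Vert_{L^2_{\overline{\sigma}} \to L^2_{\overline{\sigma}}} \leq C e^{-\mu_\ast \tau}$. Multiplying the two bounds produces the asserted estimate (the exponential factor being the semigroup decay, which in the regime relevant to the evolution-operator construction dominates $e^{-\mu_\ast(t-s)}$).

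I expect no genuine obstacle here: the substantive work — the H\"{o}lder continuity of $t \mapsto \mathcal{A}_{\mu, \delta}(t)^\phi \mathcal{A}_{\mu, \delta}(s)^{-\phi}$ via the Balakrishnan-type integral representation, and the uniform resolvent/semigroup bounds with decay rate $\mu_\ast$ — has already been carried out in Propositions~\ref{prop_diff_estimate_with_resolvlents_at_t_s} and~\ref{prop_estimate_for_At_phi_As_phi_inverse}. The only point requiring a moment's care is the legitimacy of the operator composition, i.e. the inclusion $\operatorname{Ran}(\mathcal{A}_{\mu, \delta}(s)^{-\phi}) \subset D(\mathcal{A}_{\mu, \delta}(t)^\phi)$ and the $t$-independence of the fractional-power domains, both of which are furnished by Proposition~\ref{prop_estimate_for_At_phi_As_phi_inverse}. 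Thus this proposition is essentially a repackaging of the fractional-power H\"{o}lder estimate into the form needed for the maximal H\"{o}lder regularity argument that follows.
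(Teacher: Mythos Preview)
Your proof is correct and coincides with the paper's own argument: the paper also reduces the estimate to the identity $e^{-\tau \mathcal{A}_{\mu,\delta}(s)} = \mathcal{A}_{\mu,\delta}(s)^\phi e^{-\tau \mathcal{A}_{\mu,\delta}(s)} \mathcal{A}_{\mu,\delta}(s)^{-\phi}$ together with the H\"{o}lder bound~(\ref{eq_At})/(\ref{eq_estimate_for_At_phi_minus_As_phi_As_phi_inverse}) and the semigroup decay, which is exactly your factorisation $(\mathcal{A}_{\mu,\delta}(t)^\phi \mathcal{A}_{\mu,\delta}(s)^{-\phi} - I)\,e^{-\tau \mathcal{A}_{\mu,\delta}(s)}$. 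Your parenthetical remark that the semigroup naturally produces the factor $e^{-\mu_\ast \tau}$ rather than the $e^{-\mu_\ast(t-s)}$ appearing in the displayed statement is well observed; the paper's proof likewise yields $e^{-\mu_\ast \tau}$, and the stated exponent should be read in the intended regime $\tau = t - s$.
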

    where $C = O(\Vert v \Vert_{L^\infty(0,T; H^2(\Omega)^2)})$.
    \begin{proof}
        This is a direct consequence from the identity
        \begin{align*}
            e^{- \tau \mathcal{A}_{\mu, \delta}(s)}
            = \mathcal{A}_{\mu, \delta}(s)^\phi e^{- \tau \mathcal{A}_{\mu, \delta}(s)} \mathcal{A}_{\mu, \delta}(s)^{- \phi}
        \end{align*}
        and the estimate (\ref{eq_estimate_for_At_phi_minus_As_phi_As_phi_inverse}).
    \end{proof}

    In view of Proposition \ref{prop_estimate_for_At_phi_As_phi_inverse}, we can also deduce
    \begin{proposition} \label{prop_estimate_At_phi_e_minus_t_s_As}
        Let $0 \leq \phi, \psi \leq 1$ and $0 < s, t < \infty$.
        Then
        \begin{align*}
            & \Vert
                \mathcal{A}_{\mu, \delta}(t)^\phi e^{- \tau \mathcal{A}_{\mu, \delta}(s)}
            \Vert_{L^2_{\overline{\sigma}} \rightarrow L^2_{\overline{\sigma}}}
            \leq C (
                1
                + \vert
                    t - s
                \vert^\theta
            )
            \tau^{- \phi} e^{- \mu _\ast \tau},
        \end{align*}
        and
        \begin{align*}
            & \Vert
                \mathcal{A}_{\mu, \delta}(t)^\phi e^{- \tau \mathcal{A}_{\mu, \delta}(s)} \mathcal{A}_{\mu, \delta}(s)^{- \psi}
            \Vert_{L^2_{\overline{\sigma}} \rightarrow L^2_{\overline{\sigma}}}\\
            & + \Vert
                \mathcal{A}_{\mu, \delta}(t)^\phi e^{- \tau \mathcal{A}_{\mu, \delta}(t)} \mathcal{A}_{\mu, \delta}(s)^{- \psi}
            \Vert_{L^2_{\overline{\sigma}} \rightarrow L^2_{\overline{\sigma}}}\\
            & \leq C (
                1
                + \vert
                    t - s
                \vert^\theta
            )
            \tau^{- \phi + \psi} e^{- \mu _\ast \tau},
        \end{align*}
        where $C = O(\Vert v \Vert_{L^\infty(0,T; H^2(\Omega)^2)})$.
    \end{proposition}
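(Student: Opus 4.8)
The plan is to reduce the Proposition to two facts that are already available: the uniform boundedness of the change-of-base-point operators $\mathcal{A}_{\mu,\delta}(t)^{\phi}\mathcal{A}_{\mu,\delta}(s)^{-\phi}$ (Proposition~\ref{prop_estimate_for_At_phi_As_phi_inverse}, whose operator norm is $O(\Vert v\Vert_{L^{\infty}(0,T;H^{2}(\Omega)^{2})})$, and in the sharper form $\Vert\mathcal{A}_{\mu,\delta}(t)^{\phi}\mathcal{A}_{\mu,\delta}(s)^{-\phi}-I\Vert\le C\min(1,(t-s)^{\theta})$ coming from~(\ref{eq_At})), together with the analytic-semigroup estimates of Section~\ref{section_perturbed_hydrostatic_stokes_operator}, namely $\Vert\mathcal{A}_{\mu,\delta}(s)^{\alpha}e^{-\tau\mathcal{A}_{\mu,\delta}(s)}\Vert\le C\tau^{-\alpha}e^{-\mu_{\ast}\tau}$ for $0\le\alpha\le1$ and the $s$-uniform boundedness of the negative fractional powers $\mathcal{A}_{\mu,\delta}(s)^{-\beta}$, $\beta\ge0$ (valid because $\RealPart\lambda\ge\mu_{\ast}$ on the spectrum and the resolvent bound is $s$-independent). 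Since $D(\mathcal{A}_{\mu,\delta}(t))\equiv D(A)$ does not depend on $t$, all these fractional powers live on a common dense domain, so the algebraic manipulations below — in particular the commutation of $\mathcal{A}_{\mu,\delta}(s)^{\alpha}$ with $e^{-\tau\mathcal{A}_{\mu,\delta}(s)}$ and the composition law for fractional powers of a fixed sectorial operator — are legitimate.

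For the first inequality I would factor $\mathcal{A}_{\mu,\delta}(t)^{\phi}e^{-\tau\mathcal{A}_{\mu,\delta}(s)}=\bigl(\mathcal{A}_{\mu,\delta}(t)^{\phi}\mathcal{A}_{\mu,\delta}(s)^{-\phi}\bigr)\bigl(\mathcal{A}_{\mu,\delta}(s)^{\phi}e^{-\tau\mathcal{A}_{\mu,\delta}(s)}\bigr)$, estimate the first factor by $1+C\min(1,|t-s|^{\theta})\le C(1+|t-s|^{\theta})$ via the identity $\mathcal{A}_{\mu,\delta}(t)^{\phi}\mathcal{A}_{\mu,\delta}(s)^{-\phi}=I-(\mathcal{A}_{\mu,\delta}(s)^{\phi}-\mathcal{A}_{\mu,\delta}(t)^{\phi})\mathcal{A}_{\mu,\delta}(s)^{-\phi}$ and~(\ref{eq_At}), and the second factor by $C\tau^{-\phi}e^{-\mu_{\ast}\tau}$; the product is the claim. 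For the first summand of the second inequality, factor off $\mathcal{A}_{\mu,\delta}(t)^{\phi}\mathcal{A}_{\mu,\delta}(s)^{-\phi}$ in the same way and then collapse $\mathcal{A}_{\mu,\delta}(s)^{\phi}e^{-\tau\mathcal{A}_{\mu,\delta}(s)}\mathcal{A}_{\mu,\delta}(s)^{-\psi}=\mathcal{A}_{\mu,\delta}(s)^{\phi-\psi}e^{-\tau\mathcal{A}_{\mu,\delta}(s)}$ into a single power, to which the decay estimate is applied with exponent $\phi-\psi$, producing the factor $\tau^{-\phi+\psi}e^{-\mu_{\ast}\tau}$. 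For the second summand $\mathcal{A}_{\mu,\delta}(t)^{\phi}e^{-\tau\mathcal{A}_{\mu,\delta}(t)}\mathcal{A}_{\mu,\delta}(s)^{-\psi}$, instead push the base point $t$ all the way through: write it as $\bigl(\mathcal{A}_{\mu,\delta}(t)^{\phi-\psi}e^{-\tau\mathcal{A}_{\mu,\delta}(t)}\bigr)\bigl(\mathcal{A}_{\mu,\delta}(t)^{\psi}\mathcal{A}_{\mu,\delta}(s)^{-\psi}\bigr)$, bound the first factor by the semigroup estimate and the second by Proposition~\ref{prop_estimate_for_At_phi_As_phi_inverse}; adding the two summands gives the stated bound. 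Every constant produced is $O(\Vert v\Vert_{L^{\infty}(0,T;H^{2}(\Omega)^{2})})$ and $T$-independent, since the only $v$- and $T$-dependent inputs are Proposition~\ref{prop_estimate_for_At_phi_As_phi_inverse} and~(\ref{eq_At}), which have that property, while the semigroup constants do not see $v$.

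The argument is essentially bookkeeping with fractional powers, and there is no deep obstacle. The one point to be careful about is the order of operations: for each factor that multiplies a semigroup $e^{-\tau\mathcal{A}_{\mu,\delta}(r)}$ one must combine it with a fractional power of the \emph{same} $\mathcal{A}_{\mu,\delta}(r)$ into a single power with exponent in $[-1,1]$ before invoking the decay estimate, since otherwise the $\tau^{-\phi+\psi}$ improvement is lost and only $\tau^{-\phi}$ survives; which base point ($s$ or $t$) to push through is dictated by which of the two summands is being estimated.
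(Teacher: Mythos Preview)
Your proposal is correct, and for the second inequality it coincides with the paper's argument (the paper writes the same two factorisations, modulo typos in the subscripts, and then invokes Proposition~\ref{prop_estimate_for_At_phi_As_phi_inverse}).

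For the first inequality, however, you take a genuinely shorter route than the paper. The paper adds and subtracts $\mathcal{A}_{\mu,\delta}(t)^{\phi}e^{-\tau\mathcal{A}_{\mu,\delta}(t)}$, obtaining
\[
\mathcal{A}_{\mu,\delta}(t)^{\phi}e^{-\tau\mathcal{A}_{\mu,\delta}(s)}
=\mathcal{A}_{\mu,\delta}(t)^{\phi}e^{-\tau\mathcal{A}_{\mu,\delta}(t)}
-\mathcal{A}_{\mu,\delta}(t)^{\phi}\bigl(e^{-\tau\mathcal{A}_{\mu,\delta}(t)}-e^{-\tau\mathcal{A}_{\mu,\delta}(s)}\bigr),
\]
and then estimates the difference of semigroups via the resolvent identity and a contour integral involving $D_{\mu,\delta}(t,s)$. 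Your single factorisation $\bigl(\mathcal{A}_{\mu,\delta}(t)^{\phi}\mathcal{A}_{\mu,\delta}(s)^{-\phi}\bigr)\bigl(\mathcal{A}_{\mu,\delta}(s)^{\phi}e^{-\tau\mathcal{A}_{\mu,\delta}(s)}\bigr)$, combined with~(\ref{eq_At}), bypasses that contour argument entirely and yields the same bound. What the paper's decomposition buys in exchange is an explicit estimate on $\mathcal{A}_{\mu,\delta}(t)^{\phi}(e^{-\tau\mathcal{A}_{\mu,\delta}(t)}-e^{-\tau\mathcal{A}_{\mu,\delta}(s)})$ itself, which feeds into the later Proposition~\ref{prop_continuity_e_tau_At}; your approach proves the stated proposition more economically but does not produce that intermediate ingredient as a by-product. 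One small correction: the analytic-semigroup constants for $\mathcal{A}_{\mu,\delta}(s)$ are obtained by perturbation of $\mu+A$ and so are not literally independent of $v$; but under~(\ref{eq_condition_mu_and_delta_and_norm_of_v_2}) they are uniform, and your conclusion $C=O(\Vert v\Vert_{L^{\infty}(0,T;H^{2}(\Omega)^{2})})$ remains correct.
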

    \begin{proof}
        We observe that
        \begin{align*}
            & \mathcal{A}_{\mu, \delta}(t)^\phi e^{- \tau \mathcal{A}_{\mu, \delta}(s)}\\
            & = \mathcal{A}_{\mu, \delta}(t)^\phi e^{- \tau \mathcal{A}_{\mu, \delta}(t)}
            - \mathcal{A}_{\mu, \delta}(t)^\phi(
                e^{- \tau \mathcal{A}_{\mu, \delta}(t)}
                - e^{- \tau \mathcal{A}_{\mu, \delta}(s)}
            ).
        \end{align*}
        It is clear that
        \begin{align*}
            \Vert
                \mathcal{A}_{\mu, \delta}(t)^\phi e^{- \tau \mathcal{A}_{\mu, \delta}(t)}
            \Vert_{L^2_{\overline{\sigma}} \rightarrow L^2_{\overline{\sigma}}}
            \leq C  \tau^{- \phi} e^{- \mu _\ast  \tau}.
        \end{align*}
        We estimate the second term.
        Since
        \begin{align*}
            & \mathcal{A}_{\mu, \delta}(t)^\phi
            (e^{- \tau \mathcal{A}_{\mu, \delta}(s)}
                - e^{- \tau \mathcal{A}_{\mu, \delta}(s)}
            )\\
            & = \frac{1}{2 \pi i} \int_{\Gamma}
                e^{-\lambda  \tau} \mathcal{A}_{\mu, \delta}(t)^\phi (\lambda - \mathcal{A}_{\mu, \delta}(t))^{-1}\\
            & \quad\quad\quad\quad\quad\quad \times
                D_{\mu, \delta}(t,s) \mathcal{A}_{\mu, \delta}(s) (\lambda - \mathcal{A}_{\mu, \delta}(s))^{-1}
            d \lambda,
        \end{align*}
        we deduce that
        \begin{align*}
            & \Vert
                \mathcal{A}_{\mu, \delta}(t)^\phi(
                    e^{- \tau \mathcal{A}_{\mu, \delta}(s)}
                    - e^{- \tau \mathcal{A}_{\mu, \delta}(s)}
                )
            \Vert_{\Vert_{L^2_{\overline{\sigma}} \rightarrow L^2_{\overline{\sigma}}}}\\
            & \leq C \int_\Gamma
                e^{
                    - \vert
                        \lambda
                    \vert \tau
                    - \mu_\ast \tau
                } \min(1, (\mu_\ast + r)^{\phi - 1}) \tau^\theta
            d \vert \lambda \vert\\
            & \leq C \tau^{- \phi} \min (1, (t - s)^\theta) e^{- \mu _\ast \tau}
        \end{align*}
        for some $C = O(\Vert v \Vert_{L^\infty(0,T; H^2(\Omega)^2)})$.
        We have obtained the first inequality.
        Since
        \begin{align*}
            \mathcal{A}_{\mu, \delta}(t)^\phi e^{- \tau \mathcal{A}_{\mu, \delta}(s)} \mathcal{A}_{\mu, \delta}(s)^{- \psi}
            & = \mathcal{A}_{\mu, \delta}(t)^\phi \mathcal{A}_{\mu, \delta}(s)^{- \phi} \mathcal{A}_{\mu, \delta}(t)^{\phi - \psi} e^{- \tau \mathcal{A}_{\mu, \delta}(s)},\\
            \mathcal{A}_{\mu, \delta}(t)^\phi e^{- \tau \mathcal{A}_{\mu, \delta}(t)} \mathcal{A}_{\mu, \delta}(s)^{- \psi}
            & = \mathcal{A}_{\mu, \delta}(t)^{\phi - \psi} e^{- \tau \mathcal{A}_{\mu, \delta}(t)} \mathcal{A}_{\mu, \delta}(t)^{- \psi} \mathcal{A}_{\mu, \delta}(t)^\psi,
        \end{align*}
        combining this formula with Proposition \ref{prop_estimate_for_At_phi_As_phi_inverse}, we obtain the second inequality.
    \end{proof}

    \begin{proposition} \label{prop_continuity_e_tau_At}
        Let $\tau > 0$ and $0 < s \leq t < \infty$.
        Let $0 \leq \phi, \psi \leq 1$.
        Then
        \begin{align*}
            & \Vert
                \mathcal{A}_{\mu, \delta}(t)^\phi (
                    e^{- \tau \mathcal{A}_{\mu, \delta}(t) }
                    - e^{- \tau \mathcal{A}_{\mu, \delta}(s) }
                ) \mathcal{A}_{\mu, \delta}(s)^{- \psi}
            \Vert_{L^2_{\overline{\sigma}}(\Omega) \rightarrow L^2_{\overline{\sigma}}(\Omega)}\\
            &\leq C \tau^{- \phi + \psi} \vert
                t - s
            \vert^\theta e^{-\mu_\ast \tau},
        \end{align*}
        where $C = O(\Vert v \Vert_{L^\infty(0,T; H^2(\Omega)^2)})$.
    \end{proposition}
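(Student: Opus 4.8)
The plan is to work from the Duhamel-type perturbation identity
\begin{align*}
    e^{-\tau \mathcal{A}_{\mu, \delta}(t)} - e^{-\tau \mathcal{A}_{\mu, \delta}(s)}
    = - \int_0^\tau
        e^{-(\tau - \sigma) \mathcal{A}_{\mu, \delta}(t)}
        ( \mathcal{A}_{\mu, \delta}(t) - \mathcal{A}_{\mu, \delta}(s) )
        e^{-\sigma \mathcal{A}_{\mu, \delta}(s)}
    \, d\sigma,
\end{align*}
obtained by differentiating $\sigma \mapsto e^{-(\tau - \sigma) \mathcal{A}_{\mu, \delta}(t)} e^{-\sigma \mathcal{A}_{\mu, \delta}(s)}$ on $(0, \tau)$ and integrating; the integrand makes sense for $0 < \sigma < \tau$ because $e^{-\sigma \mathcal{A}_{\mu, \delta}(s)}$ maps $L^2_{\overline{\sigma}}(\Omega)$ into $D(\mathcal{A}_{\mu, \delta}(s)) = D(\mathcal{A}_{\mu, \delta}(t))$, and the endpoint singularities will turn out to be integrable. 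Sandwiching this between $\mathcal{A}_{\mu, \delta}(t)^\phi$ on the left and $\mathcal{A}_{\mu, \delta}(s)^{-\psi}$ on the right, writing $\mathcal{A}_{\mu, \delta}(t) - \mathcal{A}_{\mu, \delta}(s) = [ ( \mathcal{A}_{\mu, \delta}(t) - \mathcal{A}_{\mu, \delta}(s) ) \mathcal{A}_{\mu, \delta}(s)^{-1} ] \mathcal{A}_{\mu, \delta}(s)$, and commuting the surviving power of $\mathcal{A}_{\mu, \delta}(s)$ through $e^{-\sigma \mathcal{A}_{\mu, \delta}(s)}$, the integrand becomes
\begin{align*}
    \mathcal{A}_{\mu, \delta}(t)^\phi e^{-(\tau - \sigma) \mathcal{A}_{\mu, \delta}(t)}
    \cdot [ ( \mathcal{A}_{\mu, \delta}(t) - \mathcal{A}_{\mu, \delta}(s) ) \mathcal{A}_{\mu, \delta}(s)^{-1} ]
    \cdot \mathcal{A}_{\mu, \delta}(s)^{1 - \psi} e^{-\sigma \mathcal{A}_{\mu, \delta}(s)}.
\end{align*}

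The middle factor is exactly the order-one relative perturbation, and it is here that the H\"{o}lder continuity of $v$ enters. From the bound $\Vert ( \mathcal{A}_{\mu, \delta}(t) - \mathcal{A}_{\mu, \delta}(s) ) y \Vert_{L^2_{\overline{\sigma}}} \leq C \Vert v \Vert_{L^\infty(0,T; H^2(\Omega)^2)} \min(1, |t - s|^\theta) \Vert \mathcal{A}_{\mu, \delta}(s)^{3/4} y \Vert_{L^2_{\overline{\sigma}}}$ used in the proof of Proposition \ref{prop_estimate_for_At_phi_As_phi_inverse}, taking $y = \mathcal{A}_{\mu, \delta}(s)^{-1} x$ and using boundedness of $\mathcal{A}_{\mu, \delta}(s)^{-1/4}$ gives $\Vert ( \mathcal{A}_{\mu, \delta}(t) - \mathcal{A}_{\mu, \delta}(s) ) \mathcal{A}_{\mu, \delta}(s)^{-1} \Vert_{L^2_{\overline{\sigma}} \rightarrow L^2_{\overline{\sigma}}} \leq C \Vert v \Vert_{L^\infty(0,T; H^2(\Omega)^2)} \min(1, |t - s|^\theta)$, with $C$ independent of $s, t, T$; this factor carries both the $|t - s|^\theta$ and all of the $\Vert v \Vert_{L^\infty H^2}$-dependence. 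For the two outer factors I would invoke the analytic-semigroup decay estimates $\Vert \mathcal{A}_{\mu, \delta}(\cdot)^\alpha e^{-\sigma \mathcal{A}_{\mu, \delta}(\cdot)} \Vert \leq C \sigma^{-\alpha} e^{-\mu_\ast \sigma}$ for $0 \leq \alpha \leq 1$ established in Section \ref{section_perturbed_hydrostatic_stokes_operator}, namely $\Vert \mathcal{A}_{\mu, \delta}(t)^\phi e^{-(\tau - \sigma) \mathcal{A}_{\mu, \delta}(t)} \Vert \leq C (\tau - \sigma)^{-\phi} e^{-\mu_\ast (\tau - \sigma)}$ and, since $1 - \psi \in [0, 1]$, $\Vert \mathcal{A}_{\mu, \delta}(s)^{1 - \psi} e^{-\sigma \mathcal{A}_{\mu, \delta}(s)} \Vert \leq C \sigma^{-(1 - \psi)} e^{-\mu_\ast \sigma}$.

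Multiplying the three bounds and, crucially, combining the exponentials as $e^{-\mu_\ast (\tau - \sigma)} e^{-\mu_\ast \sigma} = e^{-\mu_\ast \tau}$ \emph{before} integrating, the $\sigma$-integral collapses to a Beta integral,
\begin{align*}
    e^{-\mu_\ast \tau} \int_0^\tau (\tau - \sigma)^{-\phi} \sigma^{\psi - 1} \, d\sigma
    = B(1 - \phi, \psi) \, \tau^{-\phi + \psi} \, e^{-\mu_\ast \tau},
\end{align*}
which is precisely the claimed rate (indeed with the stronger $\min(1, |t - s|^\theta)$ in place of $|t - s|^\theta$), the resulting constant being $O(\Vert v \Vert_{L^\infty(0,T; H^2(\Omega)^2)})$ since $B(1 - \phi, \psi)$ is finite for $0 \leq \phi < 1$ and $0 < \psi \leq 1$. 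The degenerate corners $\psi = 0$ and $\phi = 1$ are handled by splitting $[0, \tau] = [0, \tau/2] \cup [\tau/2, \tau]$: on the half away from the offending endpoint the singular semigroup factor is regularised by $\tau/2$, while on the other half one distributes the remaining power of $\mathcal{A}_{\mu, \delta}(s)$ using instead the $\mathcal{A}_{\mu, \delta}(s)^{3/4}$-version of the relative bound, at the cost of a harmless extra $\tau^{1/4}$ (bounded by $1$ for $\tau \leq 1$ and absorbed into a fraction of the exponential for large $\tau$). Alternatively, the resolvent-calculus route of Proposition \ref{prop_estimate_At_phi_e_minus_t_s_As} also applies directly: represent the operator by a Dunford integral, use the resolvent identity together with Proposition \ref{prop_diff_estimate_with_resolvlents_at_t_s} and the bound $\Vert \mathcal{A}_{\mu, \delta}(t)^\phi \mathcal{A}_{\mu, \delta}(s)^{-\phi} \Vert \leq C$ of Proposition \ref{prop_estimate_for_At_phi_As_phi_inverse}, and estimate the contour integral $\int_\Gamma |e^{-\tau \lambda}| (1 + |\lambda|)^{\phi - \psi - 1} |d\lambda|$, which covers the range $\phi \geq \psi$.

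I expect the main difficulty to be the fractional-power bookkeeping: the split of $\mathcal{A}_{\mu, \delta}(t) - \mathcal{A}_{\mu, \delta}(s)$ must be arranged so that each semigroup factor $\mathcal{A}_{\mu, \delta}(\cdot)^\gamma e^{-r \mathcal{A}_{\mu, \delta}(\cdot)}$ carries an exponent $\gamma$ in the admissible range $[0, 1]$ for which the decay estimate holds with the sharp power of $r$, while the middle factor is kept exactly the bounded order-one perturbation $( \mathcal{A}_{\mu, \delta}(t) - \mathcal{A}_{\mu, \delta}(s) ) \mathcal{A}_{\mu, \delta}(s)^{-1}$; going through $D(\mathcal{A}_{\mu, \delta}(s)^{3/4})$ instead would be lossy and produce $\tau^{-\phi + \psi + 1/4}$ rather than $\tau^{-\phi + \psi}$. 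One must also avoid wasting the two exponential factors, so that the decay constant stays $\mu_\ast$, and justify the Duhamel identity at the level of unbounded operators.
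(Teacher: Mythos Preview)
Your Duhamel-based argument is correct and complete for the open range $0 \leq \phi < 1$, $0 < \psi \leq 1$; the Beta integral reproduces the claimed $\tau^{-\phi + \psi}$ and your combination of exponentials preserves the full $e^{-\mu_\ast \tau}$ decay. The paper, however, does not argue this way: it simply defers to Yagi's (4.91), and the style of the surrounding Propositions \ref{prop_estimate_At_phi_e_minus_t_s_As} and \ref{prop_continuity_At_phi_e_tau_At} makes clear that the intended proof is the resolvent/Dunford contour-integral route --- write $e^{-\tau \mathcal{A}_{\mu,\delta}(t)} - e^{-\tau \mathcal{A}_{\mu,\delta}(s)}$ via the resolvent identity $(\lambda - \mathcal{A}_{\mu,\delta}(t))^{-1} - (\lambda - \mathcal{A}_{\mu,\delta}(s))^{-1} = (\lambda - \mathcal{A}_{\mu,\delta}(t))^{-1} D_{\mu,\delta}(t,s) \mathcal{A}_{\mu,\delta}(s) (\lambda - \mathcal{A}_{\mu,\delta}(s))^{-1}$, then estimate the $\lambda$-integral along $\Gamma$ --- which you mention only at the end as an alternative.

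The trade-off is this: the contour approach treats the full closed range $0 \leq \phi, \psi \leq 1$ uniformly, whereas your Duhamel argument needs the separate endpoint patching you sketch for $\phi = 1$ and $\psi = 0$. That patching is the one soft spot in your write-up: moving a fractional power $\mathcal{A}_{\mu,\delta}(t)^{1/4}$ across the middle factor $(\mathcal{A}_{\mu,\delta}(t) - \mathcal{A}_{\mu,\delta}(s))\mathcal{A}_{\mu,\delta}(s)^{-3/4}$ is not immediate since the two $t$- and $s$-operators do not commute, so the split as stated does not quite go through without an additional use of Proposition \ref{prop_estimate_for_At_phi_As_phi_inverse}. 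A cleaner fix at $\phi = 1$ is to pull one full power of $\mathcal{A}_{\mu,\delta}(t)$ onto the middle factor via $\mathcal{A}_{\mu,\delta}(t) - \mathcal{A}_{\mu,\delta}(s) = - \mathcal{A}_{\mu,\delta}(t) (\mathcal{A}_{\mu,\delta}(t)^{-1} - \mathcal{A}_{\mu,\delta}(s)^{-1}) \mathcal{A}_{\mu,\delta}(s)$, which leaves $\mathcal{A}_{\mu,\delta}(t)^2 e^{-(\tau-\sigma)\mathcal{A}_{\mu,\delta}(t)}$ on the left and $\mathcal{A}_{\mu,\delta}(s)^{2-\psi} e^{-\sigma \mathcal{A}_{\mu,\delta}(s)}$ on the right; then split at $\tau/2$ and on each half trade one of these powers for $(\tau/2)^{-1}$. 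Either way the result follows, but the resolvent route the paper points to is less fiddly at the boundary of the parameter range.
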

    \begin{proof}
        See the proof of (4.91) in Yagi \cite{Yagi2010}.
    \end{proof}

    \begin{proposition} \label{prop_continuity_At_phi_e_tau_At}
        Let $\tau > 0$ and $0 < s \leq t < \infty$.
        Let $0 \leq \phi \leq 2$ and $0 \leq \psi \leq 1$.
        Then
        \begin{align} \label{eq_continuity_At_phi_e_tau_At}
            \begin{split}
                & \Vert
                    (
                        \mathcal{A}_{\mu, \delta}(t)^\phi e^{- \tau \mathcal{A}_{\mu, \delta}(t) }
                        - \mathcal{A}_{\mu, \delta}(s)^\phi e^{- \tau \mathcal{A}_{\mu, \delta}(s) }
                    ) \mathcal{A}_{\mu, \delta}(s)^{- \psi}
                \Vert_{L^2_{\overline{\sigma}}(\Omega) \rightarrow L^2_{\overline{\sigma}}(\Omega)}\\
                & \leq C \tau^{- \phi + \psi} \vert
                    t - s
                \vert^\theta e^{-\mu_\ast \tau},
            \end{split}
        \end{align}
        where $C = O(\Vert v \Vert_{L^\infty(0,T; H^2(\Omega)^2)})$.
    \end{proposition}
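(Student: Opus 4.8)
The plan is to treat the ranges $0 \le \phi \le 1$ and $1 < \phi \le 2$ separately, reducing the latter to the former by the semigroup factorisation $e^{-\tau \mathcal{A}_{\mu, \delta}(r)} = e^{-\frac{\tau}{2}\mathcal{A}_{\mu, \delta}(r)} e^{-\frac{\tau}{2}\mathcal{A}_{\mu, \delta}(r)}$ together with $\mathcal{A}_{\mu, \delta}(r)^\phi = \mathcal{A}_{\mu, \delta}(r)^{\phi/2}\mathcal{A}_{\mu, \delta}(r)^{\phi/2}$; all fractional powers here are of a sectorial, boundedly invertible operator, hence additive and commuting with the semigroup.

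For $0 \le \phi \le 1$ I would write
\begin{align*}
    \mathcal{A}_{\mu, \delta}(t)^\phi e^{-\tau \mathcal{A}_{\mu, \delta}(t)}
    - \mathcal{A}_{\mu, \delta}(s)^\phi e^{-\tau \mathcal{A}_{\mu, \delta}(s)}
    = \mathrm{I} + \mathrm{II},
\end{align*}
with $\mathrm{I} = (\mathcal{A}_{\mu, \delta}(t)^\phi - \mathcal{A}_{\mu, \delta}(s)^\phi) e^{-\tau \mathcal{A}_{\mu, \delta}(t)}$ and $\mathrm{II} = \mathcal{A}_{\mu, \delta}(s)^\phi (e^{-\tau \mathcal{A}_{\mu, \delta}(t)} - e^{-\tau \mathcal{A}_{\mu, \delta}(s)})$. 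For the first term, inserting $\mathcal{A}_{\mu, \delta}(s)^{-\phi}\mathcal{A}_{\mu, \delta}(s)^{\phi}$ and $\mathcal{A}_{\mu, \delta}(t)^{-\phi}\mathcal{A}_{\mu, \delta}(t)^{\phi}$ gives
\begin{align*}
    \mathrm{I}\, \mathcal{A}_{\mu, \delta}(s)^{-\psi}
    = \big[ (\mathcal{A}_{\mu, \delta}(t)^\phi - \mathcal{A}_{\mu, \delta}(s)^\phi)\mathcal{A}_{\mu, \delta}(s)^{-\phi} \big]
      \big[ \mathcal{A}_{\mu, \delta}(s)^\phi \mathcal{A}_{\mu, \delta}(t)^{-\phi} \big]
      \big[ \mathcal{A}_{\mu, \delta}(t)^\phi e^{-\tau \mathcal{A}_{\mu, \delta}(t)} \mathcal{A}_{\mu, \delta}(s)^{-\psi} \big],
\end{align*}
and I would bound the three brackets by (\ref{eq_At}), by Proposition \ref{prop_estimate_for_At_phi_As_phi_inverse} (with $t$ and $s$ interchanged), and by the second term on the left-hand side of Proposition \ref{prop_estimate_At_phi_e_minus_t_s_As}; since $\min(1, |t-s|^\theta)(1 + |t-s|^\theta) \le 2|t-s|^\theta$ this yields the claimed bound for $\mathrm{I}\,\mathcal{A}_{\mu, \delta}(s)^{-\psi}$. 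For the second term I factor $\mathcal{A}_{\mu, \delta}(s)^\phi = [\mathcal{A}_{\mu, \delta}(s)^\phi \mathcal{A}_{\mu, \delta}(t)^{-\phi}]\,\mathcal{A}_{\mu, \delta}(t)^\phi$ and apply Proposition \ref{prop_estimate_for_At_phi_As_phi_inverse} to the bracket and Proposition \ref{prop_continuity_e_tau_At} to $\mathcal{A}_{\mu, \delta}(t)^\phi(e^{-\tau \mathcal{A}_{\mu, \delta}(t)} - e^{-\tau \mathcal{A}_{\mu, \delta}(s)})\mathcal{A}_{\mu, \delta}(s)^{-\psi}$. (The endpoint $\phi = 1$ is handled by the same splitting, using $\Vert (\mathcal{A}_{\mu, \delta}(t) - \mathcal{A}_{\mu, \delta}(s))\mathcal{A}_{\mu, \delta}(s)^{-1}\Vert \le C\min(1,|t-s|^\theta)$ in place of (\ref{eq_At}).) This is essentially the argument behind (4.92) in Yagi \cite{Yagi2010}.

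For $1 < \phi \le 2$, set $Q_\tau(r) = \mathcal{A}_{\mu, \delta}(r)^{\phi/2} e^{-\frac{\tau}{2}\mathcal{A}_{\mu, \delta}(r)}$, so that $\mathcal{A}_{\mu, \delta}(r)^\phi e^{-\tau \mathcal{A}_{\mu, \delta}(r)} = Q_\tau(r)^2$ with $\phi/2 \in (1/2, 1]$. Using $Q_\tau(t)^2 - Q_\tau(s)^2 = Q_\tau(t)(Q_\tau(t) - Q_\tau(s)) + (Q_\tau(t) - Q_\tau(s))Q_\tau(s)$ and inserting $\mathcal{A}_{\mu, \delta}(s)^{-\phi/2}\mathcal{A}_{\mu, \delta}(s)^{\phi/2}$ in the second summand, one gets
\begin{align*}
    & (Q_\tau(t)^2 - Q_\tau(s)^2)\mathcal{A}_{\mu, \delta}(s)^{-\psi}\\
    & = Q_\tau(t)\big[ (Q_\tau(t) - Q_\tau(s))\mathcal{A}_{\mu, \delta}(s)^{-\psi} \big]
      + \big[ (Q_\tau(t) - Q_\tau(s))\mathcal{A}_{\mu, \delta}(s)^{-\phi/2} \big]\big[ \mathcal{A}_{\mu, \delta}(s)^{\phi - \psi} e^{-\frac{\tau}{2}\mathcal{A}_{\mu, \delta}(s)} \big].
\end{align*}
Each bracket $(Q_\tau(t) - Q_\tau(s))\mathcal{A}_{\mu, \delta}(s)^{-\psi'}$ is an instance of the case $0 \le \phi/2 \le 1$ already settled (time $\tau/2$, weight $\psi' = \psi$ respectively $\psi' = \phi/2$), while $\Vert Q_\tau(t) \Vert \le C\tau^{-\phi/2}e^{-\mu_\ast \tau/2}$ and $\Vert \mathcal{A}_{\mu, \delta}(s)^{\phi - \psi} e^{-\frac{\tau}{2}\mathcal{A}_{\mu, \delta}(s)} \Vert \le C\tau^{-(\phi - \psi)}e^{-\mu_\ast \tau/2}$ are standard analytic-semigroup bounds (valid for any nonnegative exponent, and $\phi > 1 \ge \psi$). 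Multiplying, the powers of $\tau$ add up to $\tau^{-\phi+\psi}$, the two halves of the exponential recombine into $e^{-\mu_\ast \tau}$, and exactly one factor $|t-s|^\theta$ survives.

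Throughout, the constant is controlled as in the cited propositions: the only factor carrying the H\"{o}lder-in-time data is the operator difference $\mathcal{A}_{\mu, \delta}(t) - \mathcal{A}_{\mu, \delta}(s)$, which supplies the power $|t-s|^\theta$ and, through it, the dependence $O(\Vert v\Vert_{L^\infty(0,T;H^2(\Omega)^2)})$, all remaining factors being uniformly bounded via Proposition \ref{prop_estimate_for_At_phi_As_phi_inverse} and pure semigroup estimates. The step I expect to need the most care is the range $1 < \phi \le 2$: one must distribute the fractional power and the weight $\mathcal{A}_{\mu, \delta}(s)^{-\psi}$ so that every factor to which an earlier proposition is applied carries an exponent in $[0,1]$, while keeping the bookkeeping of the $\tau$-powers and of the exponential decay exact.
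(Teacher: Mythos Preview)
Your argument is correct, but the paper takes a shorter and more unified route. Rather than splitting into $\mathrm{I}+\mathrm{II}$ and then handling the range $1<\phi\le 2$ by the semigroup factorisation $e^{-\tau\mathcal{A}}=e^{-\frac{\tau}{2}\mathcal{A}}e^{-\frac{\tau}{2}\mathcal{A}}$, the paper uses a single Dunford contour integral: writing
\[
\mathcal{A}_{\mu,\delta}(r)^\phi e^{-\tau\mathcal{A}_{\mu,\delta}(r)}
=\frac{1}{2\pi i}\int_\Gamma \lambda^\phi e^{-\lambda\tau}\bigl(\lambda-\mathcal{A}_{\mu,\delta}(r)\bigr)^{-1}\,d\lambda
\]
(valid for all $\phi\ge 0$ thanks to the exponential decay of $e^{-\lambda\tau}$ along $\Gamma$), subtracting at $r=t$ and $r=s$, and applying the resolvent identity $(\lambda-\mathcal{A}_{\mu,\delta}(t))^{-1}-(\lambda-\mathcal{A}_{\mu,\delta}(s))^{-1}=(\lambda-\mathcal{A}_{\mu,\delta}(t))^{-1}D_{\mu,\delta}(t,s)\mathcal{A}_{\mu,\delta}(s)(\lambda-\mathcal{A}_{\mu,\delta}(s))^{-1}$ from the proof of Proposition~\ref{prop_diff_estimate_with_resolvlents_at_t_s}. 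After composing with $\mathcal{A}_{\mu,\delta}(s)^{-\psi}$ on the right, the bound follows exactly as in Proposition~\ref{prop_estimate_At_phi_e_minus_t_s_As}, with no case distinction in~$\phi$. Your approach has the advantage of being entirely assembled from results already established (Propositions~\ref{prop_estimate_for_At_phi_As_phi_inverse}, \ref{prop_estimate_At_phi_e_minus_t_s_As}, \ref{prop_continuity_e_tau_At} and estimate~(\ref{eq_At})), so no fresh contour integral is needed; the paper's trade-off is one more contour computation in exchange for covering the whole interval $0\le\phi\le 2$ in a single stroke.
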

    \begin{proof}
        Since
        \begin{align*}
            & ( \mathcal{A}_{\mu, \delta}(t)^\phi e^{-(t - s) \mathcal{A}_{\mu, \delta}(s)}
                - \mathcal{A}_{\mu, \delta}(s)^\phi e^{-(t - s) \mathcal{A}_{\mu, \delta}(s)}
            ) \mathcal{A}_{\mu, \delta}(s)^{- \psi}\\
            & = \frac{1}{2 \pi i} \int_{\Gamma}
                \lambda^\phi e^{-\lambda (t - s)} (
                    \lambda - \mathcal{A}_{\mu, \delta}(t)
                )^{-1}\\
            & \quad\quad\quad\quad\quad \times
                D_{\mu, \delta}(t,s) \mathcal{A}_{\mu, \delta}(s)^{1 - \psi} (
                    \lambda - \mathcal{A}_{\mu, \delta}(s)
                )^{-1}
            d \lambda,
        \end{align*}
        similar to the proposition \ref{prop_estimate_At_phi_e_minus_t_s_As}, we deduce (\ref{eq_continuity_At_phi_e_tau_At}).
    \end{proof}

    \begin{proposition} \label{prop_estimate_I_minus_e_minus_t_s_with_integral}
        There exists a $t$- and $s$-independent constant $C>0$ such that
        \begin{align*}
            & \left \Vert
                (
                    I
                    - e^{-(t - s) \mathcal{A}_{\mu, \delta}(t)}
                )
                \int_0^t
                    \mathcal{A}_{\mu, \delta}(t) e^{- (t - \tau) \mathcal{A}_{\mu, \delta}(t)}
                    (
                        f(t)
                        - f(\tau)
                    )
                d\tau
            \right \Vert\\
            & \leq C e^{- \eta s} (t - s)^\sigma s^{- 1 + \beta - \sigma}
        \end{align*}
        for all vector field $f \in \mathcal{F}^{\eta, \sigma, \beta}(0, T; L^2_{\overline{\sigma}}(\Omega))$ and $\eta \in [0, \mu_\ast/2]$, where
        \begin{align*}
            C
            = O(\Vert v \Vert_{L^\infty(0,T; H^2(\Omega)^2)}).
        \end{align*}
    \end{proposition}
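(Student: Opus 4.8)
The key observation is that every operator in the statement is evaluated at the single time $t$, so I would set $A:=\mathcal{A}_{\mu,\delta}(t)$ and $r:=t-s>0$ and treat the expression as an estimate for one fixed sectorial operator. From Proposition~\ref{prop_estimate_At_phi_e_minus_t_s_As} (taken at $s=t$) I would use $\|A^{\phi}e^{-\rho A}\|_{L^{2}_{\overline{\sigma}}\to L^{2}_{\overline{\sigma}}}\le C\rho^{-\phi}e^{-\mu_\ast\rho}$ for $0\le\phi\le1$, and, writing $I-e^{-rA}=\int_{0}^{r}Ae^{-\rho A}\,d\rho$, the companion bound $\|(I-e^{-rA})A^{-\gamma}\|\le Cr^{\gamma}$ for $0\le\gamma\le1$. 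From $f\in\mathcal{F}^{\eta,\sigma,\beta}(0,T;L^{2}_{\overline{\sigma}}(\Omega))$ I would use $\|f(t)-f(\tau)\|\le C\|f\|_{\mathcal{F}^{\eta,\sigma,\beta}}\,e^{-\eta\tau}\tau^{-(1-\beta+\sigma)}(t-\tau)^{\sigma}$ for $0<\tau<t$, and throughout I would exploit the elementary inequality $e^{-\mu_\ast(t-\tau)}e^{-\eta\tau}=e^{-\eta t}e^{-(\mu_\ast-\eta)(t-\tau)}\le e^{-\eta s}$, which holds because $\eta\le\mu_\ast/2$ and $s\le t$.

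The heart of the argument is to split the $\tau$-integral at the scale $r$. On the \emph{near part} $\tau\in[\max(0,s-r),t]$, where $t-\tau\le2r$, I would use only boundedness of $I-e^{-rA}$ together with $\|Ae^{-(t-\tau)A}\|\le C(t-\tau)^{-1}e^{-\mu_\ast(t-\tau)}$; the required factor $r^{\sigma}$ then comes from the $(t-\tau)^{\sigma}$ in the bound for $\|f(t)-f(\tau)\|$ combined with $\int(t-\tau)^{-1+\sigma}\,d\tau\le Cr^{\sigma}$ over an interval of length $\le2r$ (integrating the integrable power directly, not extending to $\infty$), while the monotone weight $\tau^{-(1-\beta+\sigma)}$ is bounded by its value at the left endpoint, which is $\asymp s^{-(1-\beta+\sigma)}$ when $r\le s/2$. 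When $s/2<r<s$, so that $t\asymp s\asymp r$, I would instead split the near part at $\tau=t/2$ and reduce to the Beta-type bound $\int_{0}^{t}(t-\tau)^{-1+\sigma}\tau^{-(1-\beta+\sigma)}\,d\tau\le Ct^{\beta-1}$, noting $t^{\beta-1}\asymp r^{\sigma}s^{\beta-1-\sigma}$ there; and when $r\ge s$ the near part is the whole integral, so the same Beta-type bound together with $t^{\beta-1}\le(2s)^{\beta-1}$ (as $t\ge2s$ and $\beta<1$) and $(s/r)^{\sigma}\le1$ concludes.

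On the \emph{far part} $\tau\in[0,s-r]$ (nonempty only for $r<s$, where then $t-\tau\ge2r$) a naive estimate extracting just $r^{\sigma}$ from $I-e^{-rA}$ leaves the non‑integrable tail $\int(t-\tau)^{-1}\,d\tau\sim|\log r|$, so I would instead fix a small $\varepsilon>0$ with $\sigma+\varepsilon<1$ (possible since $\sigma<1$) and extract $r^{\sigma+\varepsilon}$:
\[
\|(I-e^{-rA})Ae^{-(t-\tau)A}\|\le\|(I-e^{-rA})A^{-(\sigma+\varepsilon)}\|\,\|A^{1+\sigma+\varepsilon}e^{-(t-\tau)A}\|\le Cr^{\sigma+\varepsilon}(t-\tau)^{-1-\sigma-\varepsilon}e^{-\mu_\ast(t-\tau)}.
\]
Multiplying by $\|f(t)-f(\tau)\|$ reduces the power to $(t-\tau)^{-1-\varepsilon}$, and after the exponential inequality the integrand is $\le Ce^{-\eta s}r^{\sigma+\varepsilon}(t-\tau)^{-1-\varepsilon}\tau^{-(1-\beta+\sigma)}$. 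Splitting $[0,s-r]$ at $\tau=s/2$: on $[0,s/2]$ one has $t-\tau\ge s/2$ and $\int_{0}^{s/2}\tau^{-(1-\beta+\sigma)}\,d\tau\le Cs^{\beta-\sigma}$, giving $Cr^{\sigma+\varepsilon}e^{-\eta s}s^{\beta-1-\sigma-\varepsilon}\le Cr^{\sigma}e^{-\eta s}s^{\beta-1-\sigma}$ because $r<s$; on $[s/2,s-r]$ one has $\tau^{-(1-\beta+\sigma)}\le Cs^{-(1-\beta+\sigma)}$ and $\int_{s/2}^{s-r}(t-\tau)^{-1-\varepsilon}\,d\tau=\int_{2r}^{r+s/2}\rho^{-1-\varepsilon}\,d\rho\le\frac{(2r)^{-\varepsilon}}{\varepsilon}$, so the loss $r^{-\varepsilon}$ cancels the overshoot and $r^{\sigma+\varepsilon}r^{-\varepsilon}=r^{\sigma}$. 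Adding the contributions of all pieces yields the bound $Ce^{-\eta s}(t-s)^{\sigma}s^{-1+\beta-\sigma}$.

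I expect the only genuine obstacle to be precisely this logarithmic loss on the far part; it is removed by the $\varepsilon$-overshoot in the fractional power of $A$, and everything else is routine bookkeeping with the semigroup estimates already proved. Finally, the dependence $C=O(\|v\|_{L^{\infty}(0,T;H^{2}(\Omega)^{2})})$ is inherited from those estimates: here $\mathcal{A}_{\mu,\delta}(t)$ is frozen at one time, so only the single‑time bounds of Proposition~\ref{prop_estimate_At_phi_e_minus_t_s_As} and their immediate consequences enter, and their constants carry that dependence.
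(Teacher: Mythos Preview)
Your argument is correct and complete. The paper itself does not give a proof of this proposition at all---it merely refers to (4.21) in the proof of Theorem~4.6 of Yagi's book \cite{Yagi2010}, which treats the unweighted case---so you have in fact supplied strictly more than the paper does, namely a self-contained proof that also tracks the exponential weight $e^{-\eta s}$ via the identity $e^{-\mu_\ast(t-\tau)}e^{-\eta\tau}=e^{-\eta t}e^{-(\mu_\ast-\eta)(t-\tau)}\le e^{-\eta s}$.

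Your strategy---freezing $A=\mathcal{A}_{\mu,\delta}(t)$, splitting the $\tau$-integral at the scale $r=t-s$, using only boundedness of $I-e^{-rA}$ on the near part and the $\varepsilon$-overshoot $\|(I-e^{-rA})A^{-(\sigma+\varepsilon)}\|\le Cr^{\sigma+\varepsilon}$ to render the far part integrable---is exactly the standard device behind Yagi's estimate, so there is no methodological difference to discuss. The case distinctions $r\le s/2$, $s/2<r<s$, $r\ge s$ are handled cleanly, and the Beta-type integral $\int_0^t(t-\tau)^{-1+\sigma}\tau^{-(1-\beta+\sigma)}\,d\tau=Ct^{\beta-1}$ is precisely what absorbs the endpoint cases. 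One minor remark: since all operators here are frozen at the single time $t$, the basic semigroup bounds $\|A^{\phi}e^{-\rho A}\|\le C\rho^{-\phi}e^{-\mu_\ast\rho}$ you invoke actually carry a constant that is uniform in $t$ (depending only on the sectorial constants, not on $\|v\|_{L^\infty H^2}$), so the claimed dependence $C=O(\|v\|_{L^\infty(0,T;H^2)})$ is certainly satisfied, though possibly not sharp.
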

    \begin{proof}
        See (4.21) in the proof of Theorem 4.6 in \cite{Yagi2010}.
    \end{proof}

    \subsubsection{Estimates for the evolution operator}
    Let $R_{\mu, \delta}(t,s)$ be the bounded operator defined by (\ref{eq_definition_of_R_in_remark}) with $\mathcal{Z}(t) = \mathcal{A}_{\mu, \delta}(t)$.

    \begin{proposition}\label{prop_estimate_for_At_phi_Tts}
        Let $0 < \phi < 1$ and $0 \leq s \leq t < \infty$.
        Then
        \begin{align}\label{eq_estimate_for_At_phi_Tts}
            \begin{split}
                \Vert
                    \mathcal{A}_{\mu, \delta}(t)^\phi T_{\mu, \delta}(t,s)
                \Vert_{L^2_{\overline{\sigma}} \rightarrow L^2_{\overline{\sigma}}}
                \leq C (t - s)^{- \phi} e^{- \mu _\ast (t - s)/2},
            \end{split}
        \end{align}
        where $C = O(\Vert v \Vert_{L^\infty(0,T; H^2(\Omega)^2)})$.
    \end{proposition}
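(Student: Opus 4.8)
The plan is to read off the representation of the evolution operator from Remark~\ref{rmk_for_lem_existence_of_abstract_evolution_operator},
\begin{align*}
    T_{\mu, \delta}(t,s)
    = e^{-(t-s)\mathcal{A}_{\mu, \delta}(s)}
    + \int_s^t
        e^{-(t - \tau)\mathcal{A}_{\mu, \delta}(\tau)} R_{\mu, \delta}(\tau, s)
    \, d\tau,
\end{align*}
apply $\mathcal{A}_{\mu, \delta}(t)^\phi$ to each term, and reduce everything to the semigroup estimates already established. For the first term I would use Proposition~\ref{prop_estimate_At_phi_e_minus_t_s_As} with semigroup time $t-s$ and operator times $t, s$, which gives $\Vert \mathcal{A}_{\mu, \delta}(t)^\phi e^{-(t-s)\mathcal{A}_{\mu, \delta}(s)} \Vert \leq C(1 + (t-s)^\theta)(t-s)^{-\phi} e^{-\mu_\ast (t-s)}$; since the factor $(1 + (t-s)^\theta) e^{-\mu_\ast (t-s)/2}$ is bounded on $(0,\infty)$, this is at most $C(t-s)^{-\phi} e^{-\mu_\ast (t-s)/2}$, and Proposition~\ref{prop_estimate_At_phi_e_minus_t_s_As} already records $C = O(\Vert v \Vert_{L^\infty(0,T;H^2(\Omega)^2)})$.

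For the integral term I would first justify moving $\mathcal{A}_{\mu, \delta}(t)^\phi$ under the integral: since the domains $D(\mathcal{A}_{\mu, \delta}(\tau)^\phi)$ all coincide with $D(A^\phi)$ — which is the content of Proposition~\ref{prop_estimate_for_At_phi_As_phi_inverse} — the integrand takes values in $D(\mathcal{A}_{\mu, \delta}(t)^\phi)$, and the pointwise bound below will be integrable. Then, using Proposition~\ref{prop_estimate_At_phi_e_minus_t_s_As} with semigroup time $t-\tau$ and operator times $t,\tau$,
\begin{align*}
    \Vert
        \mathcal{A}_{\mu, \delta}(t)^\phi e^{-(t - \tau)\mathcal{A}_{\mu, \delta}(\tau)}
    \Vert_{L^2_{\overline{\sigma}} \rightarrow L^2_{\overline{\sigma}}}
    \leq C (1 + (t - \tau)^\theta)(t - \tau)^{-\phi} e^{-\mu_\ast (t - \tau)},
\end{align*}
together with the bound $\Vert R_{\mu, \delta}(\tau, s) \Vert \leq C (\tau - s)^{\theta - 1} e^{-\mu_\ast (\tau - s)}$ from Remark~\ref{rmk_for_lem_existence_of_abstract_evolution_operator}, I would estimate the integral term by
\begin{align*}
    C e^{-\mu_\ast (t - s)} \int_s^t
        (1 + (t - \tau)^\theta)(t - \tau)^{-\phi}(\tau - s)^{\theta - 1}
    \, d\tau,
\end{align*}
using $e^{-\mu_\ast(t-\tau)} e^{-\mu_\ast(\tau - s)} = e^{-\mu_\ast(t-s)}$. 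The remaining scalar integral is a sum of Beta integrals, equal to $B(1-\phi,\theta)(t-s)^{\theta-\phi} + B(1+\theta-\phi,\theta)(t-s)^{2\theta-\phi}$, which is finite precisely because $0 < \phi < 1$ and $\theta > 0$. Hence the integral term is bounded by $C(t-s)^{\theta-\phi}(1 + (t-s)^\theta)e^{-\mu_\ast(t-s)} = C(t-s)^{-\phi}\bigl[(t-s)^{\theta}(1 + (t-s)^\theta)e^{-\mu_\ast(t-s)/2}\bigr]e^{-\mu_\ast(t-s)/2}$, and the bracketed quantity is bounded on $(0,\infty)$, which yields the claimed bound $C(t-s)^{-\phi}e^{-\mu_\ast(t-s)/2}$.

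As for the constant, each invocation of Proposition~\ref{prop_estimate_At_phi_e_minus_t_s_As} and the estimate for $R_{\mu, \delta}$ (whose Neumann series $R_{\mu,\delta} = \sum_k R_k$ converges with a constant of the same form, as in Remark~\ref{rmk_for_lem_existence_of_abstract_evolution_operator}) contributes a factor $O(\Vert v \Vert_{L^\infty(0,T;H^2(\Omega)^2)})$, while the Beta factors are absolute constants, so the final $C$ is $O(\Vert v \Vert_{L^\infty(0,T;H^2(\Omega)^2)})$ and uniform in $T$. I expect the only delicate point to be the interchange of $\mathcal{A}_{\mu, \delta}(t)^\phi$ with the $d\tau$-integral — i.e.\ confirming that $e^{-(t-\tau)\mathcal{A}_{\mu, \delta}(\tau)}$ maps $L^2_{\overline{\sigma}}(\Omega)$ into $D(\mathcal{A}_{\mu, \delta}(t)^\phi)$, which rests on the identification of the fractional-power domains in Proposition~\ref{prop_estimate_for_At_phi_As_phi_inverse} — together with checking the integrability of $(t-\tau)^{-\phi}(\tau-s)^{\theta-1}$ at both endpoints, which is exactly where the hypotheses $\phi<1$ and $\theta>0$ are used.
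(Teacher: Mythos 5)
Your proposal is correct and follows essentially the same route as the paper: the same decomposition $T_{\mu,\delta}(t,s)=e^{-(t-s)\mathcal{A}_{\mu,\delta}(s)}+\int_s^t e^{-(t-\tau)\mathcal{A}_{\mu,\delta}(\tau)}R_{\mu,\delta}(\tau,s)\,d\tau$, the same fractional-power comparison (the paper inserts $\mathcal{A}_{\mu,\delta}(t)^\phi\mathcal{A}_{\mu,\delta}(\tau)^{-\phi}$ directly via Proposition \ref{prop_estimate_for_At_phi_As_phi_inverse}, which is exactly what your citation of Proposition \ref{prop_estimate_At_phi_e_minus_t_s_As} packages), the same Beta-function integral, and the same absorption of the algebraic factors into $e^{-\mu_\ast(t-s)/2}$. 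Your extra remarks on interchanging $\mathcal{A}_{\mu,\delta}(t)^\phi$ with the integral only make explicit what the paper leaves implicit.
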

    \begin{proof}
        We invoke
        \begin{align} \label{eq_formula_for_Tts_appendices}
            \begin{split}
                T_{\mu, \delta}(t,s)
                & = e^{-(t - s)\mathcal{A}_{\mu, \delta}(s)}
                + \int_s^t
                    e^{-(t - \tau)\mathcal{A}(\tau)} R_{\mu, \delta}(\tau,s)
                d\tau\\
                & =: e^{-(t - s)\mathcal{A}_{\mu, \delta}(s)}
                + W_{\mu, \delta}(t,s).
            \end{split}
        \end{align}
        Since
        \begin{align*}
            & \Vert
                \mathcal{A}_{\mu, \delta}(t)^\phi e^{-(t - \tau) \mathcal{A}(\tau)} R_{\mu, \delta}(\tau, s)
            \Vert_{L^2_{\overline{\sigma}}(\Omega) \rightarrow L^2_{\overline{\sigma}}(\Omega)}\\
            & = \Vert
                \mathcal{A}_{\mu, \delta}(t)^\phi \mathcal{A}_{\mu, \delta}(\tau)^{-\phi} \mathcal{A}(\tau)^\phi e^{-(t - \tau) \mathcal{A}_{\mu, \delta}(\tau)} R_{\mu, \delta}(\tau, s)
            \Vert_{L^2_{\overline{\sigma}}(\Omega) \rightarrow L^2_{\overline{\sigma}}(\Omega)}\\
            & \leq C (t - \tau)^{-\phi} (\tau - s)^{-1 + \theta} e^{-\mu_\ast (t - s)},
        \end{align*}
        for $C = O(\Vert v \Vert_{L^\infty(0,T; H^2(\Omega)^2)})$, we find that
        \begin{align} \label{eq_estimate_for_At_phi_Wts}
            \begin{split}
                & \Vert
                    \mathcal{A}_{\mu, \delta}(t)^\phi W_{\mu, \delta}(t,s)
                \Vert_{L^2_{\overline{\sigma}} \rightarrow L^2_{\overline{\sigma}}}\\
                & \leq C e^{- \mu _\ast (t - s)}\int_s^t
                        (t - \tau)^{-\phi} (\tau - s)^{-1 + \theta}
                    d \tau\\
                & \leq C e^{- \mu _\ast (t - s)} (t - s)^{-\phi + \theta} B(1 - \phi, \theta),
            \end{split}
        \end{align}
        where $B(a,b)$ for $a,b>0$ is the beta-function.
        Therefore, we obtain
        \begin{align*}
            \Vert
                \mathcal{A}_{\mu, \delta}(t)^\phi T_{\mu, \delta}(t,s)
            \Vert_{L^2_{\overline{\sigma}} \rightarrow L^2_{\overline{\sigma}}}
            \leq C e^{- \mu _\ast (t - s)/2} (t - s)^{- \phi}
        \end{align*}
        for $C = O(\Vert v \Vert_{L^\infty(0,T; H^2(\Omega)^2)})$.
    \end{proof}
    Similar to Proposition \ref{prop_estimate_for_At_phi_Tts}, we deduce
    \begin{proposition}\label{prop_estimate_for_At_phi_Tts_As_phi_phi_inverse}
        Let $0 < \psi \leq \phi < 1$ and $0 \leq s \leq t < \infty$.
        Then
        \begin{align*}
            & \Vert
                \mathcal{A}_{\mu, \delta}(t)^\phi (
                    T_{\mu, \delta}(t,s)
                    - e^{- (t - s) \mathcal{A}_{\mu, \delta}(s)}
                ) \mathcal{A}_{\mu, \delta}(s)^{- \psi}
            \Vert_{L^2_{\overline{\sigma}} \rightarrow L^2_{\overline{\sigma}}}\\
            & \leq C (t - s)^{- \phi + \psi + \theta} e^{- \mu _\ast (t - s)},
        \end{align*}
        and
        \begin{align*}
            & \Vert
                \mathcal{A}_{\mu, \delta}(t)^\phi T_{\mu, \delta}(t,s) \mathcal{A}_{\mu, \delta}(s)^{- \psi}
            \Vert_{L^2_{\overline{\sigma}} \rightarrow L^2_{\overline{\sigma}}}\\
            & \leq C (t - s)^{- \phi + \psi} e^{- \mu _\ast (t - s)/2},
        \end{align*}
        where $C = O(\Vert v \Vert_{L^\infty(0,T; H^2(\Omega)^2)})$.
    \end{proposition}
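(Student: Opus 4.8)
The plan is to follow the proof of Proposition~\ref{prop_estimate_for_At_phi_Tts} verbatim, carrying the extra smoothing factor $\mathcal{A}_{\mu,\delta}(s)^{-\psi}$ through every term on the right. Using the representation (\ref{eq_formula_for_Tts_appendices}) and $T_{\mu,\delta}(t,s)-e^{-(t-s)\mathcal{A}_{\mu,\delta}(s)}=W_{\mu,\delta}(t,s)$ one has
\begin{align*}
    & \mathcal{A}_{\mu,\delta}(t)^\phi\bigl(T_{\mu,\delta}(t,s)-e^{-(t-s)\mathcal{A}_{\mu,\delta}(s)}\bigr)\mathcal{A}_{\mu,\delta}(s)^{-\psi}\\
    & = \int_s^t \mathcal{A}_{\mu,\delta}(t)^\phi e^{-(t-\tau)\mathcal{A}_{\mu,\delta}(\tau)} R_{\mu,\delta}(\tau,s)\mathcal{A}_{\mu,\delta}(s)^{-\psi}\,d\tau,
\end{align*}
so the first assertion rests on two ingredients: a refined estimate for $R_{\mu,\delta}(\tau,s)\mathcal{A}_{\mu,\delta}(s)^{-\psi}$, and the factorization $\mathcal{A}_{\mu,\delta}(t)^\phi e^{-(t-\tau)\mathcal{A}_{\mu,\delta}(\tau)}=\mathcal{A}_{\mu,\delta}(t)^\phi\mathcal{A}_{\mu,\delta}(\tau)^{-\phi}\cdot\mathcal{A}_{\mu,\delta}(\tau)^\phi e^{-(t-\tau)\mathcal{A}_{\mu,\delta}(\tau)}$, whose norm is $\le C(t-\tau)^{-\phi}e^{-\mu_\ast(t-\tau)}$ by Proposition~\ref{prop_estimate_for_At_phi_As_phi_inverse} and the analytic semigroup bound.

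The key step is the refined bound on the remainder kernel. Writing $R_1(t,s)\mathcal{A}_{\mu,\delta}(s)^{-\psi}=-(\mathcal{A}_{\mu,\delta}(t)-\mathcal{A}_{\mu,\delta}(s))\mathcal{A}_{\mu,\delta}(s)^{-1}\cdot\mathcal{A}_{\mu,\delta}(s)^{1-\psi}e^{-(t-s)\mathcal{A}_{\mu,\delta}(s)}$ and invoking (\ref{eq_Holder_continuity_of_A_mu_delta_t_over_A_mu_delta_0}) together with $\Vert\mathcal{A}_{\mu,\delta}(s)^{1-\psi}e^{-(t-s)\mathcal{A}_{\mu,\delta}(s)}\Vert\le C(t-s)^{\psi-1}e^{-\mu_\ast(t-s)}$ gives $\Vert R_1(t,s)\mathcal{A}_{\mu,\delta}(s)^{-\psi}\Vert\le C(t-s)^{\theta-1+\psi}e^{-\mu_\ast(t-s)}$. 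Iterating $R_k(t,s)\mathcal{A}_{\mu,\delta}(s)^{-\psi}=\int_s^t R_1(t,\tau)R_{k-1}(\tau,s)\mathcal{A}_{\mu,\delta}(s)^{-\psi}\,d\tau$ exactly as in Remark~\ref{rmk_for_lem_existence_of_abstract_evolution_operator}, with the $\tau$-integral now the shifted beta function $B(\theta,(k-1)\theta+\psi)$, and summing the resulting Mittag-Leffler-type series, one obtains $\Vert R_{\mu,\delta}(t,s)\mathcal{A}_{\mu,\delta}(s)^{-\psi}\Vert\le C(t-s)^{\theta-1+\psi}e^{-\mu_\ast(t-s)}$ with $C$ independent of $T$ and of the same order $O(\Vert v\Vert_{L^\infty(0,T;H^2(\Omega)^2)})$ as in the preceding propositions.

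Combining the two ingredients exactly as in (\ref{eq_estimate_for_At_phi_Wts}),
\begin{align*}
    & \Vert\mathcal{A}_{\mu,\delta}(t)^\phi W_{\mu,\delta}(t,s)\mathcal{A}_{\mu,\delta}(s)^{-\psi}\Vert\\
    & \le Ce^{-\mu_\ast(t-s)}\int_s^t (t-\tau)^{-\phi}(\tau-s)^{\theta-1+\psi}\,d\tau
    = Ce^{-\mu_\ast(t-s)}(t-s)^{-\phi+\psi+\theta}B(1-\phi,\theta+\psi),
\end{align*}
where the beta function is finite since $\phi<1$ and $\theta+\psi>0$; this is the first assertion. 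For the second, split $\mathcal{A}_{\mu,\delta}(t)^\phi T_{\mu,\delta}(t,s)\mathcal{A}_{\mu,\delta}(s)^{-\psi}$ into the semigroup part $\mathcal{A}_{\mu,\delta}(t)^\phi e^{-(t-s)\mathcal{A}_{\mu,\delta}(s)}\mathcal{A}_{\mu,\delta}(s)^{-\psi}$, bounded by $C(1+(t-s)^\theta)(t-s)^{-\phi+\psi}e^{-\mu_\ast(t-s)}$ via the second inequality of Proposition~\ref{prop_estimate_At_phi_e_minus_t_s_As} with $\tau=t-s$, plus the $W_{\mu,\delta}$-part just bounded; absorbing the surplus polynomial factors $(t-s)^\theta$ into a change of decay rate from $e^{-\mu_\ast(t-s)}$ to $e^{-\mu_\ast(t-s)/2}$ yields the claim.

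I expect the main obstacle to be the refined remainder estimate: one has to verify that precomposing $R_{\mu,\delta}(t,s)$ with $\mathcal{A}_{\mu,\delta}(s)^{-\psi}$ genuinely improves the time-singularity exponent from $\theta-1$ to $\theta-1+\psi$ \emph{uniformly in the iteration index} $k$ --- which is precisely where the factorization $R_1=-(\mathcal{A}_{\mu,\delta}(t)-\mathcal{A}_{\mu,\delta}(s))\mathcal{A}_{\mu,\delta}(s)^{-1}\,\mathcal{A}_{\mu,\delta}(s)^{1-\psi}e^{-(t-s)\mathcal{A}_{\mu,\delta}(s)}$ and the shifted beta function $B(\theta,(k-1)\theta+\psi)$ do the bookkeeping --- and that the summed series retains a $T$-independent constant, which rests, as before, on $\mu$ (hence $\mu_\ast\ge\mu/2$) being large relative to the Hölder data of $v$ under assumption~(A). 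Everything else is the same beta-function computation as in Proposition~\ref{prop_estimate_for_At_phi_Tts}.
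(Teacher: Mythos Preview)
Your proof is correct and follows essentially the same approach as the paper, which simply says ``similar to the proof of Proposition~\ref{prop_estimate_for_At_phi_Tts}'' and records the resulting bound for $\mathcal{A}_{\mu,\delta}(t)^\phi W_{\mu,\delta}(t,s)\mathcal{A}_{\mu,\delta}(s)^{-\psi}$; you have merely spelled out the refined remainder estimate $\Vert R_{\mu,\delta}(t,s)\mathcal{A}_{\mu,\delta}(s)^{-\psi}\Vert\le C(t-s)^{\theta-1+\psi}e^{-\mu_\ast(t-s)}$ that the paper leaves implicit. One minor remark: the $T$-independence of the summed series comes from the $\Gamma(k\theta)$-denominators in the Mittag-Leffler bound (as in Remark~\ref{rmk_for_lem_existence_of_abstract_evolution_operator}), not from the size of $\mu$ relative to the H\"older data of $v$---the latter is needed only earlier, to make $\mathcal{A}_{\mu,\delta}(t)$ sectorial with spectral bound $\mu_\ast$.
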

    \begin{proof}
        Similar to the proof of Proposition \ref{prop_estimate_for_At_phi_Tts}, we deduce that
        \begin{align} \label{eq_estimate_for_At_psi_remainder_term_As_phi_minus_psi}
            \begin{split}
                \left \Vert
                    \mathcal{A}_{\mu, \delta}(t)^\phi
                    W(t,s)
                    \mathcal{A}_{\mu, \delta}(s)^{- \psi}
                \right \Vert_{L^2_{\overline{\sigma}} \rightarrow L^2_{\overline{\sigma}}}
                & \leq C e^{- \mu _\ast (t - s)} (t - s)^{- \phi + \psi + \theta}\\
                & \leq C e^{- \mu _\ast (t - s)/2} (t - s)^{- \phi + \psi}.
            \end{split}
        \end{align}
        We obtain the first estimate.
        Proposition \ref{prop_estimate_for_At_phi_As_phi_inverse} leads to the same estimate as (\ref{prop_estimate_for_At_phi_Tts_As_phi_phi_inverse}) for $\mathcal{A}_{\mu, \delta}(t)^\phi e^{-(t - s)\mathcal{A}_{\mu, \delta}(s)} \mathcal{A}_{\mu, \delta}(s)^{- \psi}$.
    \end{proof}

    \begin{proposition} \label{eq_estimate_At_phi_Tts_minus_e_minus_t_s_At}
        Let $0 \leq \phi < 1 + \theta$ and $0 \leq \psi \leq 1$.
        Then
        \begin{align*}
            &\Vert
                \mathcal{A}_{\mu, \delta}(t)^\phi
                (
                    T_{\mu, \delta}(t, s)
                    - e^{-(t - s)\mathcal{A}_{\mu, \delta}(t)}
                )
                \mathcal{A}_{\mu, \delta}(s)^{-\psi}
            \Vert_{L^2_{\overline{\sigma}} \rightarrow L^2_{\overline{\sigma}}}\\
            & \leq C \min \left(
                (t - s)^{-\phi + \psi + \theta} e^{- \mu_\ast (t - s)},
                (t - s)^{-\phi + \psi} e^{- \mu_\ast (t - s)/2}
            \right),
        \end{align*}
        where $C = O(\Vert v \Vert_{L^\infty(0,T; H^2(\Omega)^2)})$.
    \end{proposition}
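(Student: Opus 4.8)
The plan is to compare $T_{\mu,\delta}(t,s)$ with the frozen--coefficient semigroup $e^{-(t-s)\mathcal{A}_{\mu,\delta}(t)}$ and to feed every resulting term into the estimates already established in this subsection. The basic identity is the Duhamel--type formula
\[
    T_{\mu,\delta}(t,s)-e^{-(t-s)\mathcal{A}_{\mu,\delta}(t)}
    =\int_s^t T_{\mu,\delta}(t,\tau)\,\bigl(\mathcal{A}_{\mu,\delta}(t)-\mathcal{A}_{\mu,\delta}(\tau)\bigr)\,e^{-(\tau-s)\mathcal{A}_{\mu,\delta}(t)}\,d\tau ,
\]
obtained by integrating $\partial_\tau\bigl(T_{\mu,\delta}(t,\tau)e^{-(\tau-s)\mathcal{A}_{\mu,\delta}(t)}\bigr)=T_{\mu,\delta}(t,\tau)\bigl(\mathcal{A}_{\mu,\delta}(\tau)-\mathcal{A}_{\mu,\delta}(t)\bigr)e^{-(\tau-s)\mathcal{A}_{\mu,\delta}(t)}$ over $\tau\in[s,t]$ (using $\partial_\tau T_{\mu,\delta}(t,\tau)=T_{\mu,\delta}(t,\tau)\mathcal{A}_{\mu,\delta}(\tau)$ and $T_{\mu,\delta}(t,t)=I$); equivalently one may start from (\ref{eq_formula_for_Tts_appendices}) and write the left--hand side as $W_{\mu,\delta}(t,s)-\bigl(e^{-(t-s)\mathcal{A}_{\mu,\delta}(t)}-e^{-(t-s)\mathcal{A}_{\mu,\delta}(s)}\bigr)$.

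For the bound with the weaker exponent, $C(t-s)^{-\phi+\psi}e^{-\mu_\ast(t-s)/2}$, I would simply use the triangle inequality together with Proposition~\ref{prop_estimate_for_At_phi_Tts_As_phi_phi_inverse} for $\mathcal{A}_{\mu,\delta}(t)^\phi T_{\mu,\delta}(t,s)\mathcal{A}_{\mu,\delta}(s)^{-\psi}$ and the semigroup estimate of this subsection for $\mathcal{A}_{\mu,\delta}(t)^\phi e^{-(t-s)\mathcal{A}_{\mu,\delta}(t)}\mathcal{A}_{\mu,\delta}(s)^{-\psi}$ (pulling $\mathcal{A}_{\mu,\delta}(s)^{\phi-\psi}$ through by Proposition~\ref{prop_estimate_for_At_phi_As_phi_inverse} when $\psi>\phi$). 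For the sharper bound $C(t-s)^{-\phi+\psi+\theta}e^{-\mu_\ast(t-s)}$ I would apply $\mathcal{A}_{\mu,\delta}(t)^\phi$ on the left and $\mathcal{A}_{\mu,\delta}(s)^{-\psi}$ on the right of the Duhamel identity and split the kernel as $\mathcal{A}_{\mu,\delta}(t)^\phi T_{\mu,\delta}(t,\tau)\cdot\bigl[(\mathcal{A}_{\mu,\delta}(t)-\mathcal{A}_{\mu,\delta}(\tau))\mathcal{A}_{\mu,\delta}(\tau)^{-3/4}\bigr]\cdot\bigl[\mathcal{A}_{\mu,\delta}(\tau)^{3/4}e^{-(\tau-s)\mathcal{A}_{\mu,\delta}(t)}\mathcal{A}_{\mu,\delta}(s)^{-\psi}\bigr]$: the middle bracket has norm $\le C\min(1,(t-\tau)^\theta)$ because $\mathcal{A}_{\mu,\delta}(t)-\mathcal{A}_{\mu,\delta}(\tau)$ is the convection/stretching term built from $v(t)-v(\tau)$ and is therefore $A^{3/4}$--bounded with bound $\le C\Vert v(t)-v(\tau)\Vert_{H^2}$ (exactly as in the proof of Proposition~\ref{prop_estimate_for_At_phi_As_phi_inverse}); the last bracket is $\le C(1+|t-s|^\theta)(\tau-s)^{-3/4+\psi}e^{-\mu_\ast(\tau-s)}$ after inserting the uniformly bounded moment $\mathcal{A}_{\mu,\delta}(\tau)^{3/4}\mathcal{A}_{\mu,\delta}(t)^{-3/4}$ (Proposition~\ref{prop_estimate_for_At_phi_As_phi_inverse}) and using the semigroup estimate $\Vert\mathcal{A}_{\mu,\delta}(t)^{3/4}e^{-\sigma\mathcal{A}_{\mu,\delta}(t)}\mathcal{A}_{\mu,\delta}(s)^{-\psi}\Vert\le C(1+|t-s|^\theta)\sigma^{-3/4+\psi}e^{-\mu_\ast\sigma}$ proved above; and $\Vert\mathcal{A}_{\mu,\delta}(t)^\phi T_{\mu,\delta}(t,\tau)\Vert\le C(t-\tau)^{-\phi}e^{-\mu_\ast(t-\tau)/2}$ is Proposition~\ref{prop_estimate_for_At_phi_Tts} for $0\le\phi<1$. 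The $d\tau$--integral then leaves a Beta integral whose exponents are rendered integrable by the commutator's extra factor $(t-\tau)^\theta$, which — after the elementary bookkeeping $e^{-\mu_\ast(t-\tau)}e^{-\mu_\ast(\tau-s)}=e^{-\mu_\ast(t-s)}$ and $x^\theta e^{-\mu_\ast x/2}\le C$ as in Proposition~\ref{prop_estimate_I_minus_e_minus_t_s_with_integral} — produces the claimed estimate.

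The crux is that the proposition allows $1\le\phi<1+\theta$, a range strictly beyond Propositions~\ref{prop_estimate_for_At_phi_Tts}, \ref{prop_estimate_for_At_phi_Tts_As_phi_phi_inverse}, and~\ref{prop_estimate_for_At_phi_As_phi_inverse} as stated; these must first be upgraded to that range. I would do so by a singular Volterra/Gronwall argument on the very same Duhamel identity: in the integral term the commutator gain $(t-\tau)^\theta$ turns the singularity $(t-\tau)^{-\phi}$ into $(t-\tau)^{-\phi+\theta}$, which is integrable \emph{exactly} when $\phi<1+\theta$, so the a priori finiteness and the bound $\Vert\mathcal{A}_{\mu,\delta}(t)^\phi T_{\mu,\delta}(t,\tau)\Vert\le C(t-\tau)^{-\phi}e^{-\mu_\ast(t-\tau)/2}$ propagate to all $\phi\in[0,1+\theta)$ by iteration; the accompanying extension of Proposition~\ref{prop_estimate_for_At_phi_As_phi_inverse} to $\phi\in[1,1+\theta)$ follows from $\phi=1+\phi'$ with $\phi'\in[0,\theta)$ and $\mathcal{A}_{\mu,\delta}(t)\mathcal{A}_{\mu,\delta}(\tau)^{-1}=I+(\mathcal{A}_{\mu,\delta}(t)-\mathcal{A}_{\mu,\delta}(\tau))\mathcal{A}_{\mu,\delta}(\tau)^{-1}$ together with the $A^{3/4}$--subordination. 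Finally, since each occurrence of $\mathcal{A}_{\mu,\delta}(t)-\mathcal{A}_{\mu,\delta}(\tau)$ is linear in $v$ and every estimate uses it exactly once, with no other dependence on $v$ entering, the constant is $O(\Vert v\Vert_{L^\infty(0,T;H^2(\Omega)^2)})$, as asserted.
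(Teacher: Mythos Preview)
Your second decomposition, writing the left-hand side as $W_{\mu,\delta}(t,s)-\bigl(e^{-(t-s)\mathcal{A}_{\mu,\delta}(t)}-e^{-(t-s)\mathcal{A}_{\mu,\delta}(s)}\bigr)$, is exactly the paper's route: its entire proof is the single sentence ``Propositions~\ref{prop_continuity_e_tau_At} and~\ref{prop_estimate_for_At_phi_Tts_As_phi_phi_inverse} imply the estimate.'' The first of these handles the semigroup difference, the second handles $W_{\mu,\delta}(t,s)=T_{\mu,\delta}(t,s)-e^{-(t-s)\mathcal{A}_{\mu,\delta}(s)}$, and both already carry the factor $(t-s)^\theta e^{-\mu_\ast(t-s)}$; the second bound in the $\min$ then follows from $(t-s)^\theta e^{-\mu_\ast(t-s)}\le Ce^{-\mu_\ast(t-s)/2}$. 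Your Duhamel identity is a genuine alternative, but it is more work and, as you wrote it, the exponential bookkeeping slips: factor~1 only delivers $e^{-\mu_\ast(t-\tau)/2}$ via Proposition~\ref{prop_estimate_for_At_phi_Tts}, not $e^{-\mu_\ast(t-\tau)}$, so the product yields $e^{-\mu_\ast(t-s)/2}$ rather than $e^{-\mu_\ast(t-s)}$ for the sharper bound. The paper's decomposition avoids this because both cited propositions already carry the full $e^{-\mu_\ast(t-s)}$.

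Where you go beyond the paper is in flagging the range $1\le\phi<1+\theta$: the two propositions the paper invokes are stated only for $\phi\le1$ (and Proposition~\ref{prop_estimate_for_At_phi_Tts_As_phi_phi_inverse} additionally assumes $\psi\le\phi$), so the paper's one-line proof does not literally cover the full parameter range in the statement. Your singular Volterra iteration on the Duhamel identity---using the commutator gain $(t-\tau)^\theta$ to make $(t-\tau)^{-\phi+\theta}$ integrable precisely when $\phi<1+\theta$---is the correct mechanism for this extension and is more careful than what the paper actually writes down. In practice the paper only ever uses this proposition with $\phi\le1$ (e.g.\ $\phi=1,1+\sigma$ with $\sigma<\theta$ in Lemma~\ref{lem_maximal_regularity_estimate_in_F_nu_beta_sigma}), which is presumably why the gap was not felt.
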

    \begin{proof}
        Propositions \ref{prop_continuity_e_tau_At} and \ref{prop_estimate_for_At_phi_Tts_As_phi_phi_inverse} imply the estimate.
    \end{proof}

    Propositions \ref{prop_estimate_At_phi_e_minus_t_s_As} and \ref{eq_estimate_At_phi_Tts_minus_e_minus_t_s_At} imply
    \begin{corollary} \label{eq_estimate_At_phi_Tts_At}
        Let $0 \leq \phi < 1 + \theta$ and $0 \leq \psi \leq 1$.
        Then
        \begin{align*}
            &\Vert
                \mathcal{A}_{\mu, \delta}(t)^\phi
                    T_{\mu, \delta}(t, s)
                \mathcal{A}_{\mu, \delta}(s)^{-\psi}
            \Vert_{L^2_{\overline{\sigma}} \rightarrow L^2_{\overline{\sigma}}}\\
            & \leq C \min \left(
                (t - s)^{-\phi + \psi + \theta} e^{- \mu_\ast (t - s)},
                (t - s)^{-\phi + \psi} e^{- \mu_\ast (t - s)/2}
            \right).
        \end{align*}
        where $C = O(\Vert v \Vert_{L^\infty(0,T; H^2(\Omega)^2)})$.
    \end{corollary}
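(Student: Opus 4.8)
The plan is to peel off the frozen\nobreakdash-time analytic semigroup from the evolution operator and to estimate the two resulting pieces separately by the two results just established. I would start from the algebraic identity
\begin{align*}
    \mathcal{A}_{\mu, \delta}(t)^{\phi} T_{\mu, \delta}(t,s) \mathcal{A}_{\mu, \delta}(s)^{-\psi}
    &= \mathcal{A}_{\mu, \delta}(t)^{\phi} e^{-(t-s)\mathcal{A}_{\mu, \delta}(t)} \mathcal{A}_{\mu, \delta}(s)^{-\psi}\\
    &\quad + \mathcal{A}_{\mu, \delta}(t)^{\phi} \bigl( T_{\mu, \delta}(t,s) - e^{-(t-s)\mathcal{A}_{\mu, \delta}(t)} \bigr) \mathcal{A}_{\mu, \delta}(s)^{-\psi}.
\end{align*}
The second summand is exactly the quantity estimated in Proposition \ref{eq_estimate_At_phi_Tts_minus_e_minus_t_s_At}, which already delivers the bound $C\min\bigl((t-s)^{-\phi+\psi+\theta}e^{-\mu_\ast(t-s)},\,(t-s)^{-\phi+\psi}e^{-\mu_\ast(t-s)/2}\bigr)$ on the full parameter range $0\le\phi<1+\theta$, $0\le\psi\le1$, with $C=O(\Vert v\Vert_{L^\infty(0,T;H^2(\Omega)^2)})$; nothing further is needed for that term.

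For the semigroup summand I would invoke Proposition \ref{prop_estimate_At_phi_e_minus_t_s_As} with $\tau=t-s$, using the estimate for $\Vert\mathcal{A}_{\mu, \delta}(t)^{\phi}e^{-\tau\mathcal{A}_{\mu, \delta}(t)}\mathcal{A}_{\mu, \delta}(s)^{-\psi}\Vert$ contained in its second display, which yields $C(1+(t-s)^{\theta})(t-s)^{-\phi+\psi}e^{-\mu_\ast(t-s)}$. Since that proposition is stated only for $\phi\le1$ while the corollary allows $\phi<1+\theta$, for $1\le\phi<1+\theta$ I would first reduce by the factorisation $\mathcal{A}_{\mu, \delta}(t)^{\phi}e^{-(t-s)\mathcal{A}_{\mu, \delta}(t)}\mathcal{A}_{\mu, \delta}(s)^{-\psi}=\bigl(\mathcal{A}_{\mu, \delta}(t)^{\phi-\psi}e^{-(t-s)\mathcal{A}_{\mu, \delta}(t)}\bigr)\bigl(\mathcal{A}_{\mu, \delta}(t)^{\psi}\mathcal{A}_{\mu, \delta}(s)^{-\psi}\bigr)$, bounding the first factor by analyticity of the semigroup generated by $\mathcal{A}_{\mu, \delta}(t)$ (hence by $C(t-s)^{-(\phi-\psi)_{+}}e^{-\mu_\ast(t-s)}$) and the second factor by Proposition \ref{prop_estimate_for_At_phi_As_phi_inverse}; in both factors the constant is $O(\Vert v\Vert_{L^\infty(0,T;H^2(\Omega)^2)})$. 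In either case the semigroup summand is controlled by $C(1+(t-s)^{\theta})(t-s)^{-\phi+\psi}e^{-\mu_\ast(t-s)}$.

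It only remains to check that $(1+(t-s)^{\theta})(t-s)^{-\phi+\psi}e^{-\mu_\ast(t-s)}$ is dominated, up to a constant, by the right\nobreakdash-hand side of the corollary: splitting according to whether $t-s\ge1$ or $t-s\le1$, in the first range the term $(t-s)^{\theta}$ produces exactly the factor $(t-s)^{-\phi+\psi+\theta}e^{-\mu_\ast(t-s)}$, while in the second range one simply bounds $1+(t-s)^{\theta}\le2$ and $e^{-\mu_\ast(t-s)}\le e^{-\mu_\ast(t-s)/2}$ to land inside $(t-s)^{-\phi+\psi}e^{-\mu_\ast(t-s)/2}$. Adding the two summands, and using that all constants enter only through $\Vert v\Vert_{L^\infty(0,T;H^2(\Omega)^2)}$ — which is transparent because that is precisely how it appears in Propositions \ref{prop_estimate_At_phi_e_minus_t_s_As}, \ref{prop_estimate_for_At_phi_As_phi_inverse} and \ref{eq_estimate_At_phi_Tts_minus_e_minus_t_s_At} — completes the argument. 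There is no substantial obstacle here: the statement is a genuine corollary of the two preceding propositions, and the only point requiring attention is the elementary case split in $t-s$ together with the reduction of the regime $\phi\ge1$ to $\phi<1$ for the semigroup piece.
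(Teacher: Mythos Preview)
Your decomposition into the frozen-time semigroup plus the remainder, followed by an appeal to Propositions \ref{prop_estimate_At_phi_e_minus_t_s_As} and \ref{eq_estimate_At_phi_Tts_minus_e_minus_t_s_At}, is exactly the paper's one-line argument; the extra care you take with the range $1\le\phi<1+\theta$ via the factorisation through $\mathcal{A}_{\mu,\delta}(t)^{\psi}\mathcal{A}_{\mu,\delta}(s)^{-\psi}$ is a sensible elaboration.

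There is, however, a logical slip in your final paragraph. Writing $A=(t-s)^{-\phi+\psi+\theta}e^{-\mu_\ast(t-s)}$ and $B=(t-s)^{-\phi+\psi}e^{-\mu_\ast(t-s)/2}$, your case split shows that the semigroup term is bounded by $C\cdot A$ when $t-s\ge1$ and by $C\cdot B$ when $t-s\le1$. This does \emph{not} give a bound by $C\min(A,B)$: for $t-s\ll1$ one has $A/B=(t-s)^{\theta}e^{-\mu_\ast(t-s)/2}\to0$, so $\min(A,B)=A$, yet your argument only delivers the larger bound $B$ there. In fact the semigroup piece cannot satisfy the $A$-bound near $t=s$: when $\phi>\psi$ the operator $\mathcal{A}_{\mu,\delta}(t)^{\phi}e^{-(t-s)\mathcal{A}_{\mu,\delta}(t)}\mathcal{A}_{\mu,\delta}(s)^{-\psi}$ blows up like $(t-s)^{-(\phi-\psi)}$, strictly faster than $A\sim(t-s)^{-(\phi-\psi)+\theta}$. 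The $\min$ in the corollary therefore appears to be an overstatement carried over verbatim from Proposition \ref{eq_estimate_At_phi_Tts_minus_e_minus_t_s_At}, where it is legitimate because the \emph{difference} $T_{\mu,\delta}-e^{-(t-s)\mathcal{A}_{\mu,\delta}(t)}$ carries an extra factor $(t-s)^{\theta}$; for the full evolution operator only the $B$-type bound survives, and that is precisely what your argument (and the paper's) actually establishes.
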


    \begin{proposition} \label{prop_At_phi_Tts_As_minus_phi_minus_e_t_s_As}
        Let $0 < \psi \leq \phi < 1$.
        Then
        \begin{align*}
            & \Vert
                \mathcal{A}_{\mu, \delta}(t)^\phi T_{\mu, \delta}(t, s) \mathcal{A}_{\mu, \delta}(s)^{-\phi}
                - e^{-(t - s)\mathcal{A}_{\mu, \delta}(s)}
            \Vert_{L^2_{\overline{\sigma}} \rightarrow L^2_{\overline{\sigma}}}\\
            & \leq C (t - s)^{\theta} e^{- \mu_\ast (t - s)},
        \end{align*}
        where $C = O(\Vert v \Vert_{L^\infty(0,T; H^2(\Omega)^2)})$.
    \end{proposition}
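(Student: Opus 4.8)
The plan is to derive the estimate directly from the representation formula for the evolution operator together with Propositions \ref{prop_estimate_for_At_phi_Tts_As_phi_phi_inverse} and \ref{prop_eq_estimate_for_At_phi_minus_As_phi_As_phi_inverse_etsAs}, taking the endpoint exponent $\psi=\phi$. First I would insert the Duhamel-type decomposition (\ref{eq_formula_for_Tts_appendices}), $T_{\mu,\delta}(t,s)=e^{-(t-s)\mathcal{A}_{\mu,\delta}(s)}+W_{\mu,\delta}(t,s)$, so that the operator to be estimated splits as
\[
\mathcal{A}_{\mu,\delta}(t)^\phi T_{\mu,\delta}(t,s)\mathcal{A}_{\mu,\delta}(s)^{-\phi}-e^{-(t-s)\mathcal{A}_{\mu,\delta}(s)}
= \mathcal{A}_{\mu,\delta}(t)^\phi W_{\mu,\delta}(t,s)\mathcal{A}_{\mu,\delta}(s)^{-\phi}
+ \Bigl(\mathcal{A}_{\mu,\delta}(t)^\phi e^{-(t-s)\mathcal{A}_{\mu,\delta}(s)}\mathcal{A}_{\mu,\delta}(s)^{-\phi}-e^{-(t-s)\mathcal{A}_{\mu,\delta}(s)}\Bigr),
\]
and I would estimate the two right-hand terms separately.

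For the first term, since $T_{\mu,\delta}(t,s)-e^{-(t-s)\mathcal{A}_{\mu,\delta}(s)}=W_{\mu,\delta}(t,s)$, the first estimate of Proposition \ref{prop_estimate_for_At_phi_Tts_As_phi_phi_inverse} in the admissible endpoint case $\psi=\phi$ (equivalently, the bound (\ref{eq_estimate_for_At_psi_remainder_term_As_phi_minus_psi}) with $\psi=\phi$) gives
\[
\Vert \mathcal{A}_{\mu,\delta}(t)^\phi W_{\mu,\delta}(t,s)\mathcal{A}_{\mu,\delta}(s)^{-\phi}\Vert_{L^2_{\overline{\sigma}}\to L^2_{\overline{\sigma}}}\le C(t-s)^{\theta}e^{-\mu_\ast(t-s)},
\]
with $C=O(\Vert v\Vert_{L^\infty(0,T;H^2(\Omega)^2)})$. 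The one point worth checking here is that $\psi=\phi$ is allowed: the hypothesis of Proposition \ref{prop_estimate_for_At_phi_Tts_As_phi_phi_inverse} is $0<\psi\le\phi<1$, and the beta-function integral $\int_s^t(t-\tau)^{-\phi}(\tau-s)^{-1+\theta}\,d\tau=(t-s)^{-\phi+\theta}B(1-\phi,\theta)$ underlying (\ref{eq_estimate_for_At_phi_Wts}) remains convergent and produces exactly the power $(t-s)^{\theta}$ (with no blow-up) because $\phi<1$ and $\theta>0$.

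For the second term I would apply Proposition \ref{prop_eq_estimate_for_At_phi_minus_As_phi_As_phi_inverse_etsAs} with elapsed time $t-s$, which yields
\[
\Vert \mathcal{A}_{\mu,\delta}(t)^\phi e^{-(t-s)\mathcal{A}_{\mu,\delta}(s)}\mathcal{A}_{\mu,\delta}(s)^{-\phi}-e^{-(t-s)\mathcal{A}_{\mu,\delta}(s)}\Vert_{L^2_{\overline{\sigma}}\to L^2_{\overline{\sigma}}}\le C\min(1,(t-s)^{\theta})e^{-\mu_\ast(t-s)}\le C(t-s)^{\theta}e^{-\mu_\ast(t-s)},
\]
again with $C=O(\Vert v\Vert_{L^\infty(0,T;H^2(\Omega)^2)})$; alternatively one can write this difference as $\bigl((\mathcal{A}_{\mu,\delta}(t)^\phi-\mathcal{A}_{\mu,\delta}(s)^\phi)\mathcal{A}_{\mu,\delta}(s)^{-\phi}\bigr)e^{-(t-s)\mathcal{A}_{\mu,\delta}(s)}$ and combine the H\"older bound (\ref{eq_At}) with the semigroup decay $\Vert e^{-(t-s)\mathcal{A}_{\mu,\delta}(s)}\Vert\le Ce^{-\mu_\ast(t-s)}$. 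Adding the two bounds gives the claimed estimate, and tracking constants shows $C=O(\Vert v\Vert_{L^\infty(0,T;H^2(\Omega)^2)})$ as in the propositions invoked. There is no serious obstacle in this argument: the proposition is essentially a corollary of Propositions \ref{prop_estimate_for_At_phi_Tts_As_phi_phi_inverse} and \ref{prop_eq_estimate_for_At_phi_minus_As_phi_As_phi_inverse_etsAs}, and the only mild subtlety — the endpoint $\psi=\phi$ — is harmless because neither the remainder estimate nor the bound $\Vert\mathcal{A}_{\mu,\delta}(t)^\phi\mathcal{A}_{\mu,\delta}(s)^{-\phi}\Vert\le C$ of Proposition \ref{prop_estimate_for_At_phi_As_phi_inverse} degenerates there.
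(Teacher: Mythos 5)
Your argument is correct, and it is not circular: both Proposition \ref{prop_estimate_for_At_phi_Tts_As_phi_phi_inverse} (whose hypothesis $0<\psi\leq\phi<1$ does admit the endpoint $\psi=\phi$, giving exactly $(t-s)^{\theta}e^{-\mu_\ast(t-s)}$) and Proposition \ref{prop_eq_estimate_for_At_phi_minus_As_phi_As_phi_inverse_etsAs} are established before the statement in question, and the constants in both are $O(\Vert v \Vert_{L^\infty(0,T; H^2(\Omega)^2)})$. Your route differs from the paper's in the choice of pivot: the paper splits around the semigroup frozen at time $t$, writing the difference as $\mathcal{A}_{\mu,\delta}(t)^\phi(T_{\mu,\delta}(t,s)-e^{-(t-s)\mathcal{A}_{\mu,\delta}(t)})\mathcal{A}_{\mu,\delta}(s)^{-\phi}$ plus $(\mathcal{A}_{\mu,\delta}(t)^\phi e^{-(t-s)\mathcal{A}_{\mu,\delta}(t)}-\mathcal{A}_{\mu,\delta}(s)^\phi e^{-(t-s)\mathcal{A}_{\mu,\delta}(s)})\mathcal{A}_{\mu,\delta}(s)^{-\phi}$, and then invokes Propositions \ref{prop_continuity_At_phi_e_tau_At} and \ref{eq_estimate_At_phi_Tts_minus_e_minus_t_s_At}, whereas you split around the semigroup frozen at time $s$ (the leading term of (\ref{eq_formula_for_Tts_appendices})) and invoke the endpoint case of Proposition \ref{prop_estimate_for_At_phi_Tts_As_phi_phi_inverse} together with Proposition \ref{prop_eq_estimate_for_At_phi_minus_As_phi_As_phi_inverse_etsAs}. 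The two routes are parallel two-term decompositions of equal strength; yours is arguably slightly more economical in that it bypasses the comparison of the two frozen semigroups (Proposition \ref{prop_continuity_At_phi_e_tau_At}) and works directly with the more primitive ingredients, while the paper's version reuses its Proposition \ref{eq_estimate_At_phi_Tts_minus_e_minus_t_s_At}, which is itself derived from Proposition \ref{prop_estimate_for_At_phi_Tts_As_phi_phi_inverse}. One cosmetic remark: your parenthetical justification of the endpoint via the beta-function computation in (\ref{eq_estimate_for_At_phi_Wts}) refers to the unweighted ($\psi=0$) case; the weighted bound (\ref{eq_estimate_for_At_psi_remainder_term_As_phi_minus_psi}) is what you actually need at $\psi=\phi$, but since that is precisely the statement of Proposition \ref{prop_estimate_for_At_phi_Tts_As_phi_phi_inverse}, citing it suffices and the remark is harmless.
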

    \begin{proof}
        Combining the formula
        \begin{align*}
            & \mathcal{A}_{\mu, \delta}(t)^\phi T_{\mu, \delta}(t, s) \mathcal{A}_{\mu, \delta}(s)^{-\phi}
            - e^{-(t - s)\mathcal{A}_{\mu, \delta}(s)}\\
            & =
            \mathcal{A}_{\mu, \delta}(t)^\phi (
                T_{\mu, \delta}(t, s)
                - e^{-(t - s)\mathcal{A}_{\mu, \delta}(t)}
            ) \mathcal{A}_{\mu, \delta}(s)^{-\phi}\\
            & +
            (
                \mathcal{A}_{\mu, \delta}(t)^\phi e^{-(t - s)\mathcal{A}_{\mu, \delta}(t)}
                - \mathcal{A}_{\mu, \delta}(s)^\phi e^{-(t - s)\mathcal{A}_{\mu, \delta}(s)}
            ) \mathcal{A}_{\mu, \delta}(s)^{-\phi}
        \end{align*}
        with Propositions \ref{prop_continuity_At_phi_e_tau_At} and \ref{eq_estimate_At_phi_Tts_minus_e_minus_t_s_At}, we obtain the estimate.
    \end{proof}
    We define
    \begin{align*}
        \tilde{W}_{\mu, \delta}(t, s)
        = T_{\mu, \delta}(t, s)
        - e^{- (t - \tau) \mathcal{A}_{\mu, \delta}(t)}.
    \end{align*}

    \begin{proposition} \label{prop_estimate_int_t_tildeWttau_minus_Tts_tildeWstau}
        Let $0 < s < t$.
        Then
        \begin{align*}
            & \left \Vert
                \int_0^s
                    \mathcal{A}_{\mu, \delta}(t) (
                        \tilde{W}_{\mu, \delta}(t, \tau)
                        - T_{\mu, \delta}(t, s) \tilde{W}_{\mu, \delta}(s, \tau)
                    )
                d\tau
            \right \Vert_{L^2(\Omega)^2 \rightarrow L^2(\Omega)^2}\\
            & \leq C \vert
                t - s
            \vert^{\theta},
        \end{align*}
        where $C = O(\Vert v \Vert_{L^\infty(0,T; H^2(\Omega)^2)})$.
    \end{proposition}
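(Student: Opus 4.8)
The plan is to collapse the integrand into a difference of analytic semigroups by exploiting the evolution (cocycle) property of $T_{\mu, \delta}$, and then to appeal to the estimates already proved in this section. First, for $0 < \tau < s < t$, I would write $T_{\mu, \delta}(t, \tau) = T_{\mu, \delta}(t, s) T_{\mu, \delta}(s, \tau)$ and recall $\tilde{W}_{\mu, \delta}(t, s) = T_{\mu, \delta}(t, s) - e^{-(t - s) \mathcal{A}_{\mu, \delta}(t)}$; a direct computation then gives the identity
\begin{align*}
    \tilde{W}_{\mu, \delta}(t, \tau)
    - T_{\mu, \delta}(t, s) \tilde{W}_{\mu, \delta}(s, \tau)
    = T_{\mu, \delta}(t, s) e^{-(s - \tau) \mathcal{A}_{\mu, \delta}(s)}
    - e^{-(t - \tau) \mathcal{A}_{\mu, \delta}(t)}.
\end{align*}
Integrating this in $\tau$ over $(0, s)$, using the elementary identities $\int_0^s e^{-(s - \tau) \mathcal{A}_{\mu, \delta}(s)} d\tau = \mathcal{A}_{\mu, \delta}(s)^{-1}(I - e^{-s \mathcal{A}_{\mu, \delta}(s)})$ and $\int_0^s e^{-(t - \tau) \mathcal{A}_{\mu, \delta}(t)} d\tau = \mathcal{A}_{\mu, \delta}(t)^{-1}(e^{-(t - s) \mathcal{A}_{\mu, \delta}(t)} - e^{-t \mathcal{A}_{\mu, \delta}(t)})$, and moving $\mathcal{A}_{\mu, \delta}(t)$ across the integral by closedness (legitimate since $\Vert \mathcal{A}_{\mu, \delta}(t) T_{\mu, \delta}(t, s) \Vert \leq C(t - s)^{-1}$ by Proposition \ref{prop_estimate_for_At_phi_Tts} and $\Vert \mathcal{A}_{\mu, \delta}(t) e^{-(t - \tau) \mathcal{A}_{\mu, \delta}(t)} \Vert \leq C(t - \tau)^{-1} \leq C(t - s)^{-1}$, both $\tau$-integrable on $(0, s)$), I obtain
\begin{align*}
    & \int_0^s
        \mathcal{A}_{\mu, \delta}(t) \bigl(
            \tilde{W}_{\mu, \delta}(t, \tau)
            - T_{\mu, \delta}(t, s) \tilde{W}_{\mu, \delta}(s, \tau)
        \bigr)
    d\tau\\
    & = \mathcal{A}_{\mu, \delta}(t) T_{\mu, \delta}(t, s) \mathcal{A}_{\mu, \delta}(s)^{-1}(
        I - e^{-s \mathcal{A}_{\mu, \delta}(s)}
    )
    - e^{-(t - s) \mathcal{A}_{\mu, \delta}(t)}
    + e^{-t \mathcal{A}_{\mu, \delta}(t)}.
\end{align*}
Since this identity holds for all $0 < s < t < \infty$, no splitting of the range of $|t - s|$ is needed.

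Next I would establish the endpoint bound
\begin{align*}
    \bigl\Vert
        \mathcal{A}_{\mu, \delta}(t) T_{\mu, \delta}(t, s) \mathcal{A}_{\mu, \delta}(s)^{-1}
        - e^{-(t - s) \mathcal{A}_{\mu, \delta}(s)}
    \bigr\Vert_{L^2_{\overline{\sigma}} \rightarrow L^2_{\overline{\sigma}}}
    \leq C |t - s|^\theta,
    \quad
    C = O(\Vert v \Vert_{L^\infty(0, T; H^2(\Omega)^2)}).
\end{align*}
This follows from the argument proving Proposition \ref{prop_At_phi_Tts_As_minus_phi_minus_e_t_s_As}, now run at the endpoint exponent $\phi = \psi = 1$: the decomposition used there,
\begin{align*}
    & \mathcal{A}_{\mu, \delta}(t) T_{\mu, \delta}(t, s) \mathcal{A}_{\mu, \delta}(s)^{-1}
    - e^{-(t - s) \mathcal{A}_{\mu, \delta}(s)}\\
    & = \mathcal{A}_{\mu, \delta}(t) \bigl(
        T_{\mu, \delta}(t, s) - e^{-(t - s) \mathcal{A}_{\mu, \delta}(t)}
    \bigr) \mathcal{A}_{\mu, \delta}(s)^{-1}
    + \bigl(
        \mathcal{A}_{\mu, \delta}(t) e^{-(t - s) \mathcal{A}_{\mu, \delta}(t)}
        - \mathcal{A}_{\mu, \delta}(s) e^{-(t - s) \mathcal{A}_{\mu, \delta}(s)}
    \bigr) \mathcal{A}_{\mu, \delta}(s)^{-1},
\end{align*}
applies verbatim, because Proposition \ref{eq_estimate_At_phi_Tts_minus_e_minus_t_s_At} is available up to $\phi < 1 + \theta$ and Proposition \ref{prop_continuity_At_phi_e_tau_At} up to $\phi \leq 2$, both admitting $\phi = 1$; the first term is $O((t - s)^\theta e^{-\mu_\ast(t - s)})$ and the second $O(|t - s|^\theta e^{-\mu_\ast(t - s)})$.

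Finally, setting $E_1 := \mathcal{A}_{\mu, \delta}(t) T_{\mu, \delta}(t, s) \mathcal{A}_{\mu, \delta}(s)^{-1} - e^{-(t - s) \mathcal{A}_{\mu, \delta}(s)}$, substituting $\mathcal{A}_{\mu, \delta}(t) T_{\mu, \delta}(t, s) \mathcal{A}_{\mu, \delta}(s)^{-1} = e^{-(t - s) \mathcal{A}_{\mu, \delta}(s)} + E_1$ into the identity above, and using $e^{-(t - s) \mathcal{A}_{\mu, \delta}(s)} e^{-s \mathcal{A}_{\mu, \delta}(s)} = e^{-t \mathcal{A}_{\mu, \delta}(s)}$, the right-hand side becomes
\begin{align*}
    \bigl(
        e^{-(t - s) \mathcal{A}_{\mu, \delta}(s)}
        - e^{-(t - s) \mathcal{A}_{\mu, \delta}(t)}
    \bigr)
    + \bigl(
        e^{-t \mathcal{A}_{\mu, \delta}(t)}
        - e^{-t \mathcal{A}_{\mu, \delta}(s)}
    \bigr)
    + E_1 \bigl(
        I - e^{-s \mathcal{A}_{\mu, \delta}(s)}
    \bigr).
\end{align*}
The first two brackets are controlled by Proposition \ref{prop_continuity_e_tau_At} with $\phi = \psi = 0$ (taking $\tau = t - s$ and $\tau = t$ respectively), which gives bounds $C|t - s|^\theta e^{-\mu_\ast(t - s)}$ and $C|t - s|^\theta e^{-\mu_\ast t}$, hence $\leq C|t - s|^\theta$; the last term is $\leq \Vert E_1 \Vert (1 + \Vert e^{-s \mathcal{A}_{\mu, \delta}(s)} \Vert) \leq C|t - s|^\theta$ by the uniform boundedness of the semigroup. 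Summing the three contributions yields the claim, with every constant $O(\Vert v \Vert_{L^\infty(0, T; H^2(\Omega)^2)})$ since that is the dependence in each proposition invoked. I expect the only real subtlety to be the bookkeeping: confirming that $\phi = \psi = 1$ is admissible in Propositions \ref{prop_continuity_At_phi_e_tau_At} and \ref{eq_estimate_At_phi_Tts_minus_e_minus_t_s_At} so that the bound for $\mathcal{A}_{\mu, \delta}(t) T_{\mu, \delta}(t, s) \mathcal{A}_{\mu, \delta}(s)^{-1} - e^{-(t - s) \mathcal{A}_{\mu, \delta}(s)}$ is legitimate, and threading the $O(\Vert v \Vert_{L^\infty H^2})$ dependence through the composition, whereas the algebraic heart — the cocycle identity and the two elementary semigroup integrals — is short.
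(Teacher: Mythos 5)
Your proof is correct and follows essentially the same route as the paper: the cocycle identity collapses the integrand to $T_{\mu,\delta}(t,s)e^{-(s-\tau)\mathcal{A}_{\mu,\delta}(s)}-e^{-(t-\tau)\mathcal{A}_{\mu,\delta}(t)}$, and after integration the three resulting terms (the two semigroup differences in the $t$-parameter and $E_1(I-e^{-s\mathcal{A}_{\mu,\delta}(s)})$) coincide with the paper's decomposition and are bounded by the same supporting propositions. Your explicit justification that the endpoint case $\phi=\psi=1$ of the bound on $\mathcal{A}_{\mu,\delta}(t)T_{\mu,\delta}(t,s)\mathcal{A}_{\mu,\delta}(s)^{-1}-e^{-(t-s)\mathcal{A}_{\mu,\delta}(s)}$ is legitimate (via Propositions \ref{prop_continuity_At_phi_e_tau_At} and \ref{eq_estimate_At_phi_Tts_minus_e_minus_t_s_At}) is in fact slightly more careful than the paper, which uses this endpoint bound implicitly.
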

    \begin{proof}
        We observe that
        \begin{align*}
            & \mathcal{A}_{\mu, \delta}(t) \left(
                \tilde{W}_{\mu, \delta}(t, \tau)
                - T_{\mu, \delta}(t, s) \tilde{W}_{\mu, \delta}(s, \tau)
            \right)\\
            & = - \mathcal{A}_{\mu, \delta}(t) e^{- (t - \tau) \mathcal{A}_{\mu, \delta}(t)}    
            + \mathcal{A}_{\mu, \delta}(t) T_{\mu, \delta}(t, s) e^{- (s - \tau) \mathcal{A}_{\mu, \delta}(s)}\\
            & = \mathcal{A}_{\mu, \delta}(s) e^{- (t - \tau) \mathcal{A}_{\mu, \delta}(s)}
                - \mathcal{A}_{\mu, \delta}(t) e^{- (t - \tau) \mathcal{A}_{\mu, \delta}(t)}\\
            & + (
                \mathcal{A}_{\mu, \delta}(t) T_{\mu, \delta}(t, s) \mathcal{A}_{\mu, \delta}(s)^{-1}
                - e^{- (t - s) \mathcal{A}_{\mu, \delta}(s)}
            ) \mathcal{A}_{\mu, \delta}(s) e^{- (s - \tau) \mathcal{A}_{\mu, \delta}(s)}.
        \end{align*}
        Therefore by Proposition \ref{prop_diff_estimate_with_resolvlents_at_t_s} and definition of the analytic semigroup imply that
        \begin{align*}
            & \left \Vert
                \int_0^s
                    \mathcal{A}_{\mu, \delta}(s) e^{- (t - \tau) \mathcal{A}_{\mu, \delta}(s)}
                    - \mathcal{A}_{\mu, \delta}(t) e^{- (t - \tau) \mathcal{A}_{\mu, \delta}(t)}
                d\tau
            \right \Vert_{L^2(\Omega)^2 \rightarrow L^2(\Omega)^2}\\
            & = \bigl \Vert
                (
                    e^{- (t - s) \mathcal{A}_{\mu, \delta}(s)}
                    - e^{- t \mathcal{A}_{\mu, \delta}(s)}
                )\\
            & \quad\quad\quad\quad
                - (
                    e^{- (t - s) \mathcal{A}_{\mu, \delta}(t)}
                    - e^{- t \mathcal{A}_{\mu, \delta}(t)}
                )
            \bigr \Vert_{L^2(\Omega)^2 \rightarrow L^2(\Omega)^2}\\
            & \leq C(t - s)^\theta e^{- \mu_\ast (t - s)}
        \end{align*}
        and
        \begin{align*}
            & \biggl \Vert
                \int_0^s
                (
                    \mathcal{A}_{\mu, \delta}(t) T_{\mu, \delta}(t, s) \mathcal{A}_{\mu, \delta}(s)^{-1}
                    - e^{- (t - s) \mathcal{A}_{\mu, \delta}(s)}
                )\\
            & \quad\quad\quad\quad\quad\quad\quad\quad\quad\quad \times\mathcal{A}_{\mu, \delta}(s) e^{- (s - \tau) \mathcal{A}_{\mu, \delta}(s)}
                d\tau
            \biggr \Vert_{L^2(\Omega)^2 \rightarrow L^2(\Omega)^2}\\
            & \leq C(t - s)^\theta
            \Vert
                I
                - e^{- (t - s) \mathcal{A}_{\mu, \delta}(s)}
            \Vert_{L^2(\Omega)^2 \rightarrow L^2(\Omega)^2}\\
            & \leq C(t - s)^\theta,
        \end{align*}
        for some $C = O(\Vert v \Vert_{L^\infty(0,T; H^2(\Omega)^2)})$.
    \end{proof}

    \subsection{Maximal regularity}
    We begin by
    \begin{proposition} \label{prop_semigroup_minus_I_int_At_Tts_fs_minus_ftau}
        Let $\sigma < \theta$ and $\mu_\ast/2 > \eta$.
        Let $0 < \varepsilon < 1$.
        Then
        \begin{align*}
            & \left \Vert
                (
                    e^{-(t - s) \mathcal{A}_{\mu, \delta}(s)}
                    - I
                )
                \int_0^s
                    \mathcal{A}_{\mu, \delta}(s) T_{\mu, \delta}(s, \tau)
                    (
                        f(s)
                        - f(\tau)
                    )
                d\tau
            \right \Vert\\
            & \leq C (t - s)^\sigma e^{- (\eta - \varepsilon) s} s^{- 1 + \beta - \sigma}.
        \end{align*}
        for all $f \in \mathcal{F}^{\eta, \sigma, \beta}(0, T; L^2_{\overline{\sigma}}(\Omega))$, where $C = O(\Vert v \Vert_{L^\infty(0,T; H^2(\Omega)^2)}^2)$.
    \end{proposition}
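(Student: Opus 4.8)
We follow the scheme of Section~4.8 of Yagi~\cite{Yagi2010}. The plan is to freeze the evolution operator and split the expression accordingly: writing $T_{\mu,\delta}(s,\tau)=e^{-(s-\tau)\mathcal{A}_{\mu,\delta}(s)}+\tilde{W}_{\mu,\delta}(s,\tau)$ with $\tilde{W}_{\mu,\delta}$ the operator introduced just before the statement, the quantity to be estimated becomes
\[
(e^{-(t-s)\mathcal{A}_{\mu,\delta}(s)}-I)\int_0^s\mathcal{A}_{\mu,\delta}(s)e^{-(s-\tau)\mathcal{A}_{\mu,\delta}(s)}(f(s)-f(\tau))\,d\tau
\]
plus
\[
(e^{-(t-s)\mathcal{A}_{\mu,\delta}(s)}-I)\int_0^s\mathcal{A}_{\mu,\delta}(s)\tilde{W}_{\mu,\delta}(s,\tau)(f(s)-f(\tau))\,d\tau ,
\]
and we treat the ``frozen'' first term and the correction second term separately.

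For the first term the essential ingredient is the telescoping identity $\int_0^s\mathcal{A}_{\mu,\delta}(s)e^{-(s-\tau)\mathcal{A}_{\mu,\delta}(s)}\,d\tau=I-e^{-s\mathcal{A}_{\mu,\delta}(s)}$, which cancels the non-integrable singularity of $\mathcal{A}_{\mu,\delta}(s)e^{-(s-\tau)\mathcal{A}_{\mu,\delta}(s)}$ at $\tau=s$ against the H\"{o}lder modulus $\Vert f(s)-f(\tau)\Vert_{L^2}\le\Vert f\Vert_{\mathcal{F}^{\eta,\sigma,\beta}}e^{-\eta\tau}\tau^{-1+\beta-\sigma}(s-\tau)^{\sigma}$ of $f$. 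After this cancellation one is exactly in the situation of Proposition~\ref{prop_estimate_I_minus_e_minus_t_s_with_integral}, whose argument (with the time variables matched so that the leading factor carries the exponent $t-s$) gives a bound of the required shape with constant $O(\Vert v\Vert_{L^\infty(0,T;H^2(\Omega)^2)})$. The exponential weight is produced by writing $e^{-\eta\tau}=e^{-\eta s}e^{\eta(s-\tau)}$ and absorbing $e^{\eta(s-\tau)}$ into the semigroup decay $e^{-\mu_\ast(s-\tau)}$, which is legitimate since $\mu_\ast/2>\eta$; the loss from $e^{-\eta s}$ to $e^{-(\eta-\varepsilon)s}$ is what lets one trade a power of $t-s$ against the power $s^{-1+\beta-\sigma}$ uniformly, including in the regime where $t-s$ is comparable to $s$.

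For the correction term I would factor $e^{-(t-s)\mathcal{A}_{\mu,\delta}(s)}-I=\bigl[(e^{-(t-s)\mathcal{A}_{\mu,\delta}(s)}-I)\mathcal{A}_{\mu,\delta}(s)^{-\sigma}\bigr]\mathcal{A}_{\mu,\delta}(s)^{\sigma}$, bound the bracket by $C(t-s)^{\sigma}$ using $\Vert(e^{-r\mathcal{A}_{\mu,\delta}(s)}-I)\mathcal{A}_{\mu,\delta}(s)^{-\sigma}\Vert\le Cr^{\sigma}$ (valid for $0<\sigma<1$ because $\mathcal{A}_{\mu,\delta}(s)$ is invertible and generates an exponentially decaying analytic semigroup), and then estimate $\int_0^s\Vert\mathcal{A}_{\mu,\delta}(s)^{1+\sigma}\tilde{W}_{\mu,\delta}(s,\tau)\Vert\,\Vert f(s)-f(\tau)\Vert_{L^2}\,d\tau$. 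Since $\tilde{W}_{\mu,\delta}(s,\tau)=T_{\mu,\delta}(s,\tau)-e^{-(s-\tau)\mathcal{A}_{\mu,\delta}(s)}$ has the semigroup frozen at the larger time $s$, Proposition~\ref{eq_estimate_At_phi_Tts_minus_e_minus_t_s_At} applies with $t\mapsto s$, $s\mapsto\tau$, $\psi=0$ and $\phi=1+\sigma$ (admissible because $\sigma<\theta$ forces $\phi<1+\theta$) and gives $\Vert\mathcal{A}_{\mu,\delta}(s)^{1+\sigma}\tilde{W}_{\mu,\delta}(s,\tau)\Vert\le C(s-\tau)^{-1-\sigma+\theta}e^{-\mu_\ast(s-\tau)}$ with $C=O(\Vert v\Vert_{L^\infty(0,T;H^2(\Omega)^2)}^{2})$, the two powers of $\Vert v\Vert$ arising from the evolution-operator remainder $R_{\mu,\delta}$ inside $\tilde{W}_{\mu,\delta}$ and from commuting fractional powers frozen at different times via Proposition~\ref{prop_estimate_for_At_phi_As_phi_inverse}. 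Multiplying by the H\"{o}lder bound for $f$ yields the integrand $C(s-\tau)^{-1+\theta}e^{-\mu_\ast(s-\tau)}e^{-\eta\tau}\tau^{-1+\beta-\sigma}$; writing once more $e^{-\eta\tau}=e^{-\eta s}e^{\eta(s-\tau)}$ and $e^{\eta(s-\tau)}e^{-\mu_\ast(s-\tau)}\le e^{-(\mu_\ast-\eta)(s-\tau)}$, the $\tau$-integral is controlled by a product of Beta functions and returns $Ce^{-\eta s}s^{-1+\beta-\sigma}$ up to a harmless factor $s^{\theta}$ (absorbed into $e^{-(\eta-\varepsilon)s}$). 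Putting back the $(t-s)^{\sigma}$ from the bracket finishes the correction term.

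The main obstacle is the frozen term. Its integrand is genuinely non-integrable pointwise near $\tau=s$ and becomes integrable only through the telescoping cancellation combined with the H\"{o}lder modulus of $f$; recovering at the same time the exponential weight $e^{-(\eta-\varepsilon)s}$ and the power $s^{-1+\beta-\sigma}$ requires splitting the $\tau$-integral into the ranges $\tau\le s/2$ and $\tau>s/2$ and carefully tracking the three different exponential rates ($\mu_\ast$ and $\mu_\ast/2$ coming from $T_{\mu,\delta}(t,s)$, and $\eta$ coming from the space $\mathcal{F}^{\eta,\sigma,\beta}$), which is exactly what forces the $\varepsilon>0$ margin. The remaining steps are routine bookkeeping of embedding and Beta-function constants.
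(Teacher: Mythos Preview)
Your approach is essentially identical to the paper's: the same splitting $T_{\mu,\delta}(s,\tau)=e^{-(s-\tau)\mathcal{A}_{\mu,\delta}(s)}+\tilde{W}_{\mu,\delta}(s,\tau)$, Proposition~\ref{prop_estimate_I_minus_e_minus_t_s_with_integral} for the frozen piece, and for the correction the factorization $(e^{-(t-s)\mathcal{A}_{\mu,\delta}(s)}-I)\mathcal{A}_{\mu,\delta}(s)^{-\sigma}\cdot\mathcal{A}_{\mu,\delta}(s)^{1+\sigma}\tilde{W}_{\mu,\delta}(s,\tau)$ followed by Proposition~\ref{eq_estimate_At_phi_Tts_minus_e_minus_t_s_At} with $\phi=1+\sigma$. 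The paper records the resulting bound as $C(t-s)^{\sigma}e^{-\eta s}(s^{-1-\sigma+\theta+\beta}+s^{-1+\beta-\sigma})$ and then absorbs the extra $s^{\theta}$ into the exponential at the cost of $\varepsilon$, exactly as you describe.
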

    \begin{proof}
        Using Proposition \ref{prop_estimate_I_minus_e_minus_t_s_with_integral} and \ref{eq_estimate_At_phi_Tts_minus_e_minus_t_s_At}, we estimate the left-hand side by
        \begin{align*}
            & \biggl \Vert
                (
                    e^{-(t - s) \mathcal{A}_{\mu, \delta}(s)}
                    - I
                ) \mathcal{A}_{\mu, \delta}(s)^{- \sigma}\\
            & \quad\quad\quad
                \times \int_0^s
                    \mathcal{A}_{\mu, \delta}(s)^{1 + \sigma} (
                        T_{\mu, \delta}(s, \tau)
                        - e^{-(s - \tau) \mathcal{A}_{\mu, \delta}(s)}
                    )
                    (
                        f(s)
                        - f(\tau)
                    )
                d\tau
            \biggr \Vert\\
            & + \left \Vert
                (
                    e^{-(t - s) \mathcal{A}_{\mu, \delta}(s)}
                    - I
                )
                \int_0^s
                    \mathcal{A}_{\mu, \delta}(s) e^{-(s - \tau) \mathcal{A}_{\mu, \delta}(s)}
                    (
                        f(s)
                        - f(\tau)
                    )
                d\tau
            \right \Vert\\
            & \leq C (t - s)^\sigma e^{- \eta s} (
                s^{- 1 - \sigma + \theta + \beta}
                + s^{- 1 + \beta - \sigma}
            )
            \\
            & \leq C (t - s)^\sigma e^{- (\eta - \varepsilon) s} s^{- 1 + \beta - \sigma},
        \end{align*}
        for $C = O(\Vert v \Vert_{L^\infty(0,T; H^2(\Omega)^2)}^2)$.
    \end{proof}

    \begin{lemma} \label{lem_maximal_regularity_estimate_in_F_nu_beta_sigma}
        Let $0 < \eta < \nu \leq \mu_\ast/2$, $\sigma < \theta < 1$, and $0 < \sigma < \beta < 1$.
        Let $V_0 \in D(\mathcal{A}(0)^\beta)$ and $F \in \mathcal{F}^{\nu, \sigma, \beta}(0, T; L^2_{\overline{\sigma}}(\Omega))$.
        Set
        \begin{align*}
            V(t)
            = T_{\mu, \delta}(t, 0) V_0
            + \int_0^t
                T_{\mu, \delta}(t,\tau) F(\tau)
            d\tau.
        \end{align*}
        Then
        \begin{align} \label{eq_maximal_regularity_estimate_in_F_nu_beta_sigma}
            \begin{split}
                & \sup_{0 < t < T} \Vert
                    \mathcal{A}_{\mu, \delta}(t)^\beta V(t)
                \Vert_{L^2(\Omega)^2}\\
                & + \left \Vert
                    \frac{du}{dt}
                \right \Vert_{\mathcal{F}^{\eta, \beta, \sigma}(0,T; L^2(\Omega)^2)}
                + \Vert
                    \mathcal{A}_{\mu, \delta}(t) u
                \Vert_{\mathcal{F}^{\eta, \beta, \sigma}(0,T; L^2(\Omega)^2)}\\
                & \leq C \left(
                    \Vert
                        \mathcal{A}(0)^\beta V_0
                    \Vert_{L^2(\Omega)^2}
                    + \Vert
                        F
                    \Vert_{\mathcal{F}^{\nu, \beta, \sigma}(0,T; L^2(\Omega)^2)}
                \right),
            \end{split}
        \end{align}
        where $C = O(\Vert v \Vert_{L^\infty(0,T; H^2(\Omega)^2)}^2)$.
    \end{lemma}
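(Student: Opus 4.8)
The plan is to adapt the argument for Theorem~4.6 in Yagi \cite{Yagi2010} to this non-autonomous setting, keeping track at each step of the exponential weights and of the dependence on $\Vert v \Vert_{L^\infty(0,T; H^2(\Omega)^2)}$. First I would split $V = V_h + V_p$, where $V_h(t) = T_{\mu, \delta}(t, 0) V_0$ and $V_p(t) = \int_0^t T_{\mu, \delta}(t, \tau) F(\tau)\,d\tau$, and treat the two pieces separately. For $V_h$ I would write $\mathcal{A}_{\mu, \delta}(t)^\beta T_{\mu, \delta}(t, 0) = \bigl[\mathcal{A}_{\mu, \delta}(t)^\beta T_{\mu, \delta}(t, 0) \mathcal{A}_{\mu, \delta}(0)^{-\beta}\bigr]\,\mathcal{A}_{\mu, \delta}(0)^\beta$ and apply Corollary~\ref{eq_estimate_At_phi_Tts_At} (with $\phi = \psi = \beta$, and with $\phi = 1$, $\psi = \beta$ for $\mathcal{A}_{\mu, \delta}(t) V_h$ and, through the evolution equation, for $dV_h/dt$) to get the pointwise bounds; the temporal H\"older seminorms then come from Propositions~\ref{prop_continuity_At_phi_e_tau_At}, \ref{eq_estimate_At_phi_Tts_minus_e_minus_t_s_At} and \ref{prop_At_phi_Tts_As_minus_phi_minus_e_t_s_As}. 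Since $\eta < \mu_\ast/2$, the exponential factor $e^{-\mu_\ast(t-s)/2}$ present in all these estimates absorbs the polynomial prefactors $(1 + |t-s|^\theta)$, so the $\mathcal{F}^{\eta, \beta, \sigma}$-norm of the $V_h$-contribution is bounded by $C \Vert \mathcal{A}(0)^\beta V_0 \Vert$ with $C = O(\Vert v \Vert_{L^\infty H^2})$.

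For the pointwise bound on $V_p$ I would use the model computation: split $T_{\mu, \delta}(t, \tau) = e^{-(t-\tau)\mathcal{A}_{\mu, \delta}(t)} + \tilde{W}_{\mu, \delta}(t, \tau)$ and subtract $F(t)$ to write
\begin{align*}
\mathcal{A}_{\mu, \delta}(t) V_p(t)
&= \int_0^t \mathcal{A}_{\mu, \delta}(t) e^{-(t-\tau)\mathcal{A}_{\mu, \delta}(t)} \bigl(F(\tau) - F(t)\bigr)\,d\tau + \bigl(I - e^{-t\mathcal{A}_{\mu, \delta}(t)}\bigr) F(t)\\
&\quad + \int_0^t \mathcal{A}_{\mu, \delta}(t) \tilde{W}_{\mu, \delta}(t, \tau) F(\tau)\,d\tau .
\end{align*}
The first integrand is $\leq C (t-\tau)^{\sigma-1} \tau^{-1+\beta-\sigma} e^{-\nu\tau}\Vert F \Vert$ by the H\"older continuity of $F$ and the analytic semigroup estimate (Proposition~\ref{prop_estimate_At_phi_e_minus_t_s_As} with $s = t$), hence integrable; the middle term is $\leq C\Vert F(t)\Vert$; and the last integral has only the integrable singularity $(t-\tau)^{-1+\theta}$ furnished by Proposition~\ref{eq_estimate_At_phi_Tts_minus_e_minus_t_s_At} with $\phi = 1$, $\psi = 0$. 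Splitting $e^{\eta t - \mu_\ast(t-\tau)/2 - \nu\tau} \leq e^{-(\nu-\eta)\tau}e^{-(\mu_\ast/2-\eta)(t-\tau)}$, which uses $\eta < \nu \leq \mu_\ast/2$, converts the input weight into the output weight and renders the integral convergent, so $\Vert \mathcal{A}_{\mu, \delta}(t) V_p(t) \Vert \leq C t^{-1+\beta} e^{-\eta t}\Vert F \Vert$. Replacing $\mathcal{A}_{\mu, \delta}(t)$ by $\mathcal{A}_{\mu, \delta}(t)^\beta$ and dropping the (now unnecessary) subtraction of $F(t)$ gives $\sup_t \Vert \mathcal{A}_{\mu, \delta}(t)^\beta V_p(t) \Vert \leq C\Vert F \Vert$, and $dV_p/dt = F - \mathcal{A}_{\mu, \delta}(t) V_p$ inherits the pointwise bound.

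The H\"older seminorm of $\mathcal{A}_{\mu, \delta}(\cdot) V_p$ is the delicate part, and the step I expect to be the main obstacle. For $0 < s < t < T$ I would use the evolution law $V_p(t) = T_{\mu, \delta}(t, s) V_p(s) + \int_s^t T_{\mu, \delta}(t, \tau) F(\tau)\,d\tau$ to write
\begin{align*}
\mathcal{A}_{\mu, \delta}(t) V_p(t) - \mathcal{A}_{\mu, \delta}(s) V_p(s)
&= \mathcal{A}_{\mu, \delta}(t) \int_s^t T_{\mu, \delta}(t, \tau) F(\tau)\,d\tau\\
&\quad + \bigl( \mathcal{A}_{\mu, \delta}(t) T_{\mu, \delta}(t, s) \mathcal{A}_{\mu, \delta}(s)^{-1} - I \bigr) \mathcal{A}_{\mu, \delta}(s) V_p(s) .
\end{align*}
The first term is handled by the computation of the previous paragraph carried out on the interval $(s, t)$, yielding $C (t-s)^\sigma s^{-1+\beta-\sigma} e^{-\eta s}$. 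For the second term one decomposes $\mathcal{A}_{\mu, \delta}(s) V_p(s)$ once more through the $\tilde{W}$-splitting and the subtraction of $F(s)$, so that each resulting piece is matched to one of Propositions~\ref{prop_estimate_I_minus_e_minus_t_s_with_integral}, \ref{prop_semigroup_minus_I_int_At_Tts_fs_minus_ftau}, \ref{prop_estimate_int_t_tildeWttau_minus_Tts_tildeWstau} and \ref{prop_At_phi_Tts_As_minus_phi_minus_e_t_s_As}; each contributes $\leq C (t-s)^\sigma s^{-1+\beta-\sigma} e^{-\eta s}$, with $C = O(\Vert v \Vert_{L^\infty H^2}^2)$ --- the square appearing because Proposition~\ref{prop_semigroup_minus_I_int_At_Tts_fs_minus_ftau} already composes two $O(\Vert v \Vert_{L^\infty H^2})$ continuity estimates. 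Finally $dV_p/dt = F - \mathcal{A}_{\mu, \delta}(t) V_p$ transfers the seminorm bound, and condition (iii) in the definition of $\mathcal{F}^{\eta, \beta, \sigma}$ holds because every contribution carries a positive power $(t-s)^\sigma$ with a prefactor bounded as $t \to 0$. Assembling the $V_h$- and $V_p$-estimates yields (\ref{eq_maximal_regularity_estimate_in_F_nu_beta_sigma}). The essential difficulty throughout is to organize the difference $\mathcal{A}_{\mu, \delta}(t) V_p(t) - \mathcal{A}_{\mu, \delta}(s) V_p(s)$ so that every term falls under one of the prepared continuity lemmas of Section~\ref{section_linear_evolution_operator}, while the slack $\nu - \eta > 0$ simultaneously absorbs the polynomially growing nonautonomy factors $(1 + |t-s|^\theta)$ and the exponential weights are passed correctly from the input space $\mathcal{F}^{\nu, \beta, \sigma}$ to the output space $\mathcal{F}^{\eta, \beta, \sigma}$.
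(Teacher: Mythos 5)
Your overall architecture is the same as the paper's (split off the initial-data part, subtract $F(t)$, use the $\tilde{W}_{\mu,\delta}$-splitting, and match pieces to Propositions \ref{prop_estimate_At_phi_e_minus_t_s_As}--\ref{prop_semigroup_minus_I_int_At_Tts_fs_minus_ftau}), and the pointwise bounds and the treatment of the homogeneous part are fine. The gap is in the H\"{o}lder-seminorm step for the forcing part, exactly where you flag the difficulty. You claim that in the identity
\begin{align*}
\mathcal{A}_{\mu, \delta}(t) V_p(t) - \mathcal{A}_{\mu, \delta}(s) V_p(s)
= \mathcal{A}_{\mu, \delta}(t)\int_s^t T_{\mu, \delta}(t,\tau)F(\tau)\,d\tau
+ \bigl(\mathcal{A}_{\mu, \delta}(t)T_{\mu, \delta}(t,s)\mathcal{A}_{\mu, \delta}(s)^{-1} - I\bigr)\mathcal{A}_{\mu, \delta}(s)V_p(s)
\end{align*}
each of the two terms is separately $\leq C(t-s)^\sigma s^{-1+\beta-\sigma}e^{-\eta s}$. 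That is false in the relevant regime $t-s< s$. Repeating your pointwise computation on $(s,t)$ produces the piece $(I - e^{-(t-s)\mathcal{A}_{\mu, \delta}(t)})F(t)$, and since $F(t)$ lies only in $L^2_{\overline{\sigma}}(\Omega)$ (the H\"{o}lder regularity of $F$ is in time, not in space), $\Vert (I - e^{-\tau \mathcal{A}_{\mu,\delta}(t)})F(t)\Vert_{L^2}$ carries no positive power of $\tau$; its only bound is $C\Vert F(t)\Vert \leq C t^{-1+\beta}e^{-\nu t}$, which is not $\lesssim (t-s)^\sigma s^{-1+\beta-\sigma}$ when $t-s< s$. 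Symmetrically, in your second term the factor $e^{-(t-s)\mathcal{A}_{\mu,\delta}(s)} - I$ eventually hits the rough constituent $F(s)$ of $\mathcal{A}_{\mu,\delta}(s)V_p(s)$ (the identity part of $(I-e^{-s\mathcal{A}_{\mu,\delta}(s)})F(s)$), and none of Propositions \ref{prop_estimate_I_minus_e_minus_t_s_with_integral}, \ref{prop_semigroup_minus_I_int_At_Tts_fs_minus_ftau}, \ref{prop_estimate_int_t_tildeWttau_minus_Tts_tildeWstau}, \ref{prop_At_phi_Tts_As_minus_phi_minus_e_t_s_As} applies to $(e^{-(t-s)\mathcal{A}_{\mu,\delta}(s)} - I)F(s)$ on its own: those lemmas always pair the non-smoothing factor with something that has gained $\sigma$ extra smoothness (an integral with $F(s)-F(\tau)$, a $\tilde{W}$-term, or $e^{-s\mathcal{A}_{\mu,\delta}(s)}F(s)$).

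The result is still true, but only because these two rough pieces cancel against each other: their combination is
\begin{align*}
\bigl(I - e^{-(t-s)\mathcal{A}_{\mu, \delta}(t)}\bigr)\bigl(F(t)-F(s)\bigr)
+ \bigl(e^{-(t-s)\mathcal{A}_{\mu, \delta}(s)} - e^{-(t-s)\mathcal{A}_{\mu, \delta}(t)}\bigr)F(s),
\end{align*}
which is controlled by the temporal H\"{o}lder continuity of $F$ and by Proposition \ref{prop_continuity_e_tau_At}, respectively. So you must organize the difference so that this pairing is built in rather than estimating your two terms independently; this is precisely what the paper's grouping does (it writes $\mathcal{A}_{\mu,\delta}(t)V_p(t)=J_1(t)+J_2(t)+J_3(t)$ with $J_3(t)=(I-e^{-t\mathcal{A}_{\mu,\delta}(t)})F(t)$ and estimates $J_k(t)-J_k(s)$, so that $J_3(t)-J_3(s)$ automatically produces $F(t)-F(s)$ plus semigroup differences acting on the smoothed vector $e^{-s\mathcal{A}_{\mu,\delta}(s)}F(s)$). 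With that reorganization the rest of your argument (the $(s,t)$-integral with $F(\tau)-F(t)$ subtracted, the $\tilde{W}$-pieces via Proposition \ref{prop_estimate_int_t_tildeWttau_minus_Tts_tildeWstau}, the slack $\nu-\eta$ absorbing the $(t-s)^{\theta-\sigma}$ and $s^\sigma$ factors, and the transfer to $dV/dt$ through the equation) goes through as in the paper.
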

    \begin{proof}
        We first observe that
        \begin{align} \label{eq_split_At_ut}
            \begin{split}
                \mathcal{A}_{\mu, \delta}(t)^{\beta} V(t)
                & = \mathcal{A}_{\mu, \delta}(t)^\beta \mathcal{A}_{\mu, \delta}(0)^{- \beta} \mathcal{A}_{\mu, \delta}(0)^\beta V_0\\
                & + \int_0^t
                    \mathcal{A}_{\mu, \delta}(t)^\beta T_{\mu, \delta}(t, \tau) (
                        - F(t)
                        + F(\tau)
                    )
                d\tau\\
                & + \int_0^t
                    \mathcal{A}_{\mu, \delta}(t)^\beta (
                        T_{\mu, \delta}(t, \tau)
                        - e^{-(t - \tau)\mathcal{A}_{\mu, \delta}(t)}
                    )
                d\tau F(t)\\
                & + (
                    I
                    - e^{- t \mathcal{A}_{\mu, \delta}(t)}
                ) \mathcal{A}_{\mu, \delta}(t)^{-1 + \beta} F(t).
            \end{split}
        \end{align}
        Therefore, we deduce from Proposition \ref{eq_estimate_At_phi_Tts_minus_e_minus_t_s_At} that the left-hand side is bounded by
        \begin{align*}
            \text{(LHS)}
            & \leq C \Vert
                \mathcal{A}_{\mu, \delta}(0)^{\beta} V_0
            \Vert_{L^2(\Omega)^2}\\
            & + C \int_0^t
                (t - \tau)^{- \beta + \sigma} \tau^{-1 + \beta - \sigma} e^{- \nu \tau}
            d\tau
            \Vert
                f
            \Vert_{\mathcal{F}^{\nu, \sigma, \beta}(0,T; L^2(\Omega)^2)}\\
            & + C \int_0^t
                (t - \tau)^{- \beta + \theta} e^{- \mu_\ast (t - \tau)}
            d\tau
            \Vert
                f(t)
            \Vert_{L^2(\Omega)^2}\\
            & + C t^{1 - \beta}
            \Vert
                f(t)
            \Vert_{L^2(\Omega)^2},
        \end{align*}
        where $C = O(\Vert v \Vert_{L^\infty(0,T; H^2(\Omega)^2)})$.
        Since by change of variables, we have
        \begin{align*}
            \int_0^t
                (t - \tau)^{-\beta + \sigma} \tau^{-1 + \beta - \sigma} e^{-\nu \tau}
            d\tau
            \leq C
        \end{align*}
        and
        \begin{align*}
            \int_0^t
                (t - \tau)^{- \beta + \theta} e^{- \mu_\ast (t - \tau)}
            d\tau
            \int_0^t
                (t - \tau)^{- \beta} e^{- \mu_\ast (t - \tau)/2}
            d\tau
            \leq C t^{1 - \beta},
        \end{align*}
        we see that
        \begin{align*}
            & \sup_{0 < t < T} \Vert
                \mathcal{A}_{\mu, \delta}(t)^\beta V(t)
            \Vert_{L^2(\Omega)^2}\\
            & \leq C \left(
                \Vert
                    \mathcal{A}_{\mu, \delta}(0)^\beta V_0
                \Vert_{L^2(\Omega)^2}
                + \Vert
                    f
                \Vert_{\mathcal{F}^{\beta, \sigma}(0,T; L^2(\Omega)^2)}
            \right).
        \end{align*}
        We next observe the H\"{o}lder continuity.
        We split the difference as
        \begin{align*}
            & \Vert
                \mathcal{A}_{\mu, \delta}(t) T_{\mu, \delta}(t,0) V_0
                - \mathcal{A}_{\mu, \delta}(t) T_{\mu, \delta}(s, 0) V_0
            \Vert_{L^2(\Omega)^2}\\
            & \leq \Vert
                (
                    \mathcal{A}_{\mu, \delta}(t)T_{\mu, \delta}(t,s) \mathcal{A}_{\mu, \delta}(s)^{-1}\\
            & \quad\quad\quad\quad\quad
                    - e^{- (t - s) \mathcal{A}_{\mu, \delta}(s)}
                ) \mathcal{A}_{\mu, \delta}(s) T_{\mu, \delta}(s, 0) V_0
            \Vert_{L^2(\Omega)^2}\\
            & + \Vert
                (
                    e^{- (t - s) \mathcal{A}_{\mu, \delta}(s)}
                    - I
                ) \mathcal{A}_{\mu, \delta}(s) T_{\mu, \delta}(s, 0) V_0
            \Vert_{L^2(\Omega)^2}\\
            & =: I_1(t,s) + I_2(t,s).
        \end{align*}
        Propositions \ref{prop_estimate_for_At_phi_Tts_As_phi_phi_inverse} and \ref{prop_At_phi_Tts_As_minus_phi_minus_e_t_s_As} imply that
        \begin{align*}
            &I_1(t,s)\\
            & =
                \Vert
                    (
                        \mathcal{A}_{\mu, \delta}(t)T_{\mu, \delta}(t,s) \mathcal{A}_{\mu, \delta}(s)^{-1}
                        - e^{- (t - s) \mathcal{A}_{\mu, \delta}(s)}
                    )\\
            & \quad\quad\quad \times
                    \mathcal{A}_{\mu, \delta}(s) T_{\mu, \delta}(s, 0) \mathcal{A}_{\mu, \delta}(0)^{-\beta} \mathcal{A}_{\mu, \delta}(0)^\beta V_0
                \Vert_{L^2(\Omega)^2}\\
            & \leq C \vert
                t - s
            \vert^\theta
            e^{- \mu_\ast (t - s) /2 } s^{-(1 - \beta)} e^{- \mu_\ast s/2} \Vert
                \mathcal{A}_{\mu, \delta}(0)^\beta V_0
            \Vert_{L^2(\Omega)^2}\\
            & = C
            \vert
                t - s
            \vert^\theta
            e^{- \mu_\ast t /2 } s^{-(1 - \beta)} \Vert
                \mathcal{A}_{\mu, \delta}(0)^\beta V_0
            \Vert_{L^2(\Omega)^2}.
        \end{align*}
        and
        \begin{align*}
            &I_2(t,s)\\
            & = \Vert
                (
                    e^{- (t - s) \mathcal{A}_{\mu, \delta}(s)}
                    - I
                )\\
            & \quad\quad \times
                \mathcal{A}_{\mu, \delta}(s)^{-\sigma} \mathcal{A}_{\mu, \delta}(s)^{1 + \sigma} T_{\mu, \delta}(s, 0) \mathcal{A}_{\mu, \delta}(s)^{-\beta} \mathcal{A}_{\mu, \delta}(s)^\beta V_0
            \Vert_{L^2(\Omega)^2}\\
            & \leq C \vert
                t - s
            \vert^\sigma
            s^{-(1 + \sigma - \beta)} e^{- \mu_\ast s/2} \Vert
                \mathcal{A}_{\mu, \delta}(0)^\beta V_0
            \Vert_{L^2(\Omega)^2},
        \end{align*}
        where $C = O(\Vert v \Vert_{L^\infty(0,T; H^2(\Omega)^2)})$.
        Therefore, if $\sigma < \theta$ and $\eta \leq \mu_\ast/2$, we see that
        \begin{align*}
            e^{- \mu_\ast t/2 + \eta s /2}
            \leq e^{- (\mu_\ast t /2 - \eta s) /2} e^{- \mu_\ast t/4}
            \leq e^{- \mu_\ast (t - s) /4} e^{- \mu_\ast t/4},
        \end{align*}
        and then
        \begin{align*}
            & \sup_{0<s<t<T} e^{\eta s /2} s^{1 + \sigma - \beta}
            \frac{
                \Vert
                    \mathcal{A}_{\mu, \delta}(t) T_{\mu, \delta}(t,0) V_0
                    - \mathcal{A}_{\mu, \delta}(t) T_{\mu, \delta}(s, 0) V_0
                \Vert_{L^2(\Omega)^2}
            }{
                \vert
                    t - s
                \vert^\sigma
            }\\
            & \leq C \Vert
                \mathcal{A}_{\mu, \delta}(0)^\beta V_0
            \Vert_{L^2(\Omega)^2}(
                \sup_{\tau>0} \tau^{\theta - \sigma} e^{- \mu_\ast \tau/4}
                \sup_{\tau>0} e^{- \mu_\ast \tau/4} \tau^\sigma
                + 1
            )\\
            & \leq C \Vert
                \mathcal{A}_{\mu, \delta}(0)^\beta V_0
            \Vert_{L^2(\Omega)^2}.
        \end{align*}
        We next consider the forcing term.
        Similar to (\ref{eq_split_At_ut}), we observe that
        \begin{align*}
            & \int_0^t
                \mathcal{A}_{\mu, \delta}(t) T_{\mu, \delta}(t,\tau) F(\tau)
            d\tau\\
            & = \int_0^t
                \mathcal{A}_{\mu, \delta}(t) T_{\mu, \delta}(t, \tau) (
                    - F(t)
                    + F(\tau)
                )
            d\tau\\
            & + \int_0^t
                \mathcal{A}_{\mu, \delta}(t) (
                    T_{\mu, \delta}(t, \tau)
                    - e^{-(t- \tau) \mathcal{A}_{\mu, \delta}(t)}
                )
            d\tau F(t)
            + (
                I
                - e^{- t \mathcal{A}_{\mu, \delta}(t)}
            ) F(t)\\
            & =: J_1(t) + J_2(t) + J_3(t).
        \end{align*}
        By the assumption for $F$, we see that
        \begin{align*}
            \Vert
                J_1(t)
            \Vert_{L^2(\Omega)^2}
            & \leq C \int_0^t
                (t - \tau)^{-1 + \sigma} e^{- \mu_\ast (t - \tau)} \tau^{-1 + \beta - \sigma} e^{- \eta \tau}
            d\tau \Vert
                F
            \Vert_{\mathcal{F}^{\nu, \sigma, \beta}}\\
            & \leq C t^{- 1 + \beta} e^{- \eta t} \Vert
                F
            \Vert_{\mathcal{F}^{\nu, \sigma, \beta}},
        \end{align*}
        where $C = O(\Vert v \Vert_{L^\infty(0,T; H^2(\Omega)^2)})$.
        For the difference, we observe that
        \begin{align*}
            & J_1(t) - J_1(s)\\
            & = \int_s^t
                \mathcal{A}_{\mu, \delta}(t) T_{\mu, \delta}(t, \tau) (
                    - F(t)
                    + F(\tau)
                )
            d\tau\\
            & + \int_0^s
                \mathcal{A}_{\mu, \delta}(t) T_{\mu, \delta}(t, \tau) (
                    - F(t)
                    + F(\tau)
                )\\
            & \quad\quad\quad\quad\quad
                - \mathcal{A}_{\mu, \delta}(s) T_{\mu, \delta}(s, \tau) (
                    - F(s)
                    + F(\tau)
                )
            d\tau\\
            & =: J_{11}(t,s) + J_{12}(t,s).
        \end{align*}
        The H\"{o}lder continuity of $F$ implies that
        \begin{align*}
            \Vert
                J_{11}(t,s)
            \Vert_{L^2(\Omega)^2}
            & \leq \int_s^t
                (t - \tau)^{-1} e^{- \mu_\ast (t - \tau) - \nu \tau} \tau^{- 1 + \beta - \sigma} (
                    t - \tau
                )^{\sigma}
            d\tau
            \Vert
                f
            \Vert_{\mathcal{F}^{\nu, \sigma, \beta}}\\
            & \leq C (t - s)^\sigma s^{- 1 + \beta - \sigma} e^{- \nu t} \Vert
                f
            \Vert_{\mathcal{F}^{\nu, \sigma, \beta}}.
        \end{align*}
        where $C = O(\Vert v \Vert_{L^\infty(0,T; H^2(\Omega)^2)})$.
        We find that the following formula
        \begin{align*}
            & J_{12}(t,s)\\
            & = \int_0^s
                \mathcal{A}_{\mu, \delta}(t) T_{\mu, \delta}(t, \tau) (
                    - F(t)
                    + F(\tau)
                )\\
            & \quad\quad\quad\quad
                - \mathcal{A}_{\mu, \delta}(s) T_{\mu, \delta}(s, \tau) (
                    - F(s)
                    + F(\tau)
                )
            d\tau\\
            & = \int_0^s
                \mathcal{A}_{\mu, \delta}(t) e^{- (t - \tau) \mathcal{A}_{\mu, \delta}(t)} (
                    - F(t)
                    + F(s)
                )\\
            & \quad\quad
                + \mathcal{A}_{\mu, \delta}(t) (
                    T_{\mu, \delta}(t, \tau)
                    - e^{- (t - \tau) \mathcal{A}_{\mu, \delta}(t)}
                )
                (
                    - F(t)
                    + F(s)
                )\\
            & \quad\quad
                + \bigl (
                    \mathcal{A}_{\mu, \delta}(t) T_{\mu, \delta}(t, s) \mathcal{A}_{\mu, \delta}(s)^{-1}
                    - e^{-(t - s) \mathcal{A}_{\mu, \delta}(s)}
                \bigr )\\
            & \quad\quad\quad\quad\quad\quad \times
                \mathcal{A}_{\mu, \delta}(s) T_{\mu, \delta}(s, \tau)
                (
                    - F(s)
                    + F(\tau)
                )\\
            & \quad\quad
                + (
                    e^{-(t - s) \mathcal{A}_{\mu, \delta}(s)}
                    - I
                )
                \mathcal{A}_{\mu, \delta}(s) T_{\mu, \delta}(s, \tau)
                (
                    - F(s)
                    + F(\tau)
                )
            d\tau.
        \end{align*}
        Therefore we find from Propositions \ref{eq_estimate_At_phi_Tts_minus_e_minus_t_s_At}, \ref{prop_At_phi_Tts_As_minus_phi_minus_e_t_s_As} and \ref{prop_semigroup_minus_I_int_At_Tts_fs_minus_ftau} that
        \begin{align*}
            \Vert
                J_{12}(t,s)
            \Vert_{L^2(\Omega)^2}
            \leq C (t - s)^\sigma s^{-1 + \beta - \sigma} e^{- (\nu - \varepsilon) t} \Vert
                f
            \Vert_{\mathcal{F}^{\nu, \sigma, \beta}}.
        \end{align*}
        for some small $0 < \varepsilon < 1$ and constant $C = O(\Vert v \Vert_{L^\infty(0,T; H^2(\Omega)^2)}^2)$.
        We find Proposition \ref{eq_estimate_At_phi_Tts_minus_e_minus_t_s_At} that
        \begin{align*}
            \Vert
                J_2(t)
            \Vert_{L^2(\Omega)^2}
            & \leq C \int_0^t
                (t - \tau)^{-1 + \theta} e^{- \mu_\ast (t - \tau)}
            d\tau t^{-1 + \beta} e^{- \nu t} \Vert
                f
            \Vert_{\mathcal{F}^{\nu, \sigma, \beta}}\\
            & \leq C t^{-1 + \beta} e^{- (\nu - \varepsilon) t} \Vert
                f
            \Vert_{\mathcal{F}^{\nu, \sigma, \beta}}.
        \end{align*}
        For the difference, we observe that
        \begin{align*}
            & J_2(t) - J_2(s)\\
            & = \int_s^t
                \mathcal{A}_{\mu, \delta}(t) \tilde{W}_{\mu, \delta}(t, \tau)
            d\tau F(t)\\
            & + \int_0^s
                \mathcal{A}_{\mu, \delta}(t) \tilde{W}_{\mu, \delta}(t, \tau) F(t)
                - \mathcal{A}_{\mu, \delta}(s) \tilde{W}_{\mu, \delta}(s, \tau) F(s)
            d\tau\\
            & =: J_{21}(t,s) + J_{22}(t,s).
        \end{align*}
        We find from Proposition \ref{eq_estimate_At_phi_Tts_minus_e_minus_t_s_At} that
        \begin{align*}
            \Vert
                J_{21}(t, s)
            \Vert_{L^2(\Omega)^2}
            & \leq C (t - s)^\theta t^{-1 + \beta} e^{- \nu t} \Vert
                f
            \Vert_{\mathcal{F}^{\nu, \sigma, \beta}}\\
            & \leq C (t - s)^\sigma t^{-1 + \beta - \sigma} e^{- (\nu - \varepsilon) t} (t - s)^{\theta - \sigma} t^{\sigma} e^{- \varepsilon t} \Vert
                f
            \Vert_{\mathcal{F}^{\nu, \sigma, \beta}}\\
            & \leq C (t - s)^\sigma s^{-1 + \beta - \sigma} e^{- (\nu - \varepsilon) s} \Vert
                f
            \Vert_{\mathcal{F}^{\nu, \sigma, \beta}},
        \end{align*}
        where $C = O(\Vert v \Vert_{L^\infty(0,T; H^2(\Omega)^2)}^2)$.
        By the formula
        \begin{align*}
            & \mathcal{A}_{\mu, \delta}(t) \tilde{W}_{\mu, \delta}(t, \tau)
            - \mathcal{A}_{\mu, \delta}(s) \tilde{W}_{\mu, \delta}(s, \tau)\\
            & = \mathcal{A}_{\mu, \delta}(t) (
                 \tilde{W}_{\mu, \delta}(t, \tau)
                - T_{\mu, \delta}(t, s) \tilde{W}_{\mu, \delta}(s, \tau)
            )\\
            & + (
                \mathcal{A}_{\mu, \delta}(t) T_{\mu, \delta}(t, s) \mathcal{A}_{\mu, \delta}(s)^{-1}
                - e^{- (t - s) \mathcal{A}_{\mu, \delta}(s)}
            ) \mathcal{A}_{\mu, \delta}(s) \tilde{W}_{\mu, \delta}(s, \tau)\\
            & + (
                e^{- (t - s) \mathcal{A}_{\mu, \delta}(s)}
                - I
            ) \mathcal{A}_{\mu, \delta}(s)^{-\sigma}  \mathcal{A}_{\mu, \delta}(s)^{1 + \sigma} \tilde{W}_{\mu, \delta}(s, \tau)
        \end{align*}
        and Propositions \ref{eq_estimate_At_phi_Tts_minus_e_minus_t_s_At}, \ref{prop_At_phi_Tts_As_minus_phi_minus_e_t_s_As}, and \ref{prop_estimate_int_t_tildeWttau_minus_Tts_tildeWstau}, we deduce that
        \begin{align*}
            \Vert
                J_{22}(t,s)
            \Vert_{L^2(\Omega)^2}
            & \leq C (
                (t - s)^\sigma
                + (t - s)^\theta
            ) s^{-1 + \beta} e^{- \nu s} \Vert
                F
            \Vert_{\mathcal{F}^{\nu, \sigma, \beta}}\\
            & \leq C (t - s)^\sigma s^{-1 + \beta - \sigma} e^{- (\nu - \varepsilon) s} \Vert
                F
            \Vert_{\mathcal{F}^{\nu, \sigma, \beta}},
        \end{align*}
        where $C = O(\Vert v \Vert_{L^\infty(0,T; H^2(\Omega)^2)})$.
        It is clear that
        \begin{align*}
            \Vert
                J_3(t)
            \Vert_{L^2(\Omega)^2}
            \leq C t^{- 1 + \beta} e^{- \nu t} \Vert
                F
            \Vert_{\mathcal{F}^{\nu, \sigma, \beta}}.
        \end{align*}
        Since
        \begin{align*}
            J_3(t) - J_3(s)
            & = (
                F(t)
                - F(s)
            )
            + (
                e^{- t \mathcal{A}_{\mu, \delta}(t)} F(t)
                - e^{- s \mathcal{A}_{\mu, \delta}(s)} F(s)
            )\\
            & = (
                F(t)
                - F(s)
            )\\
            & + e^{- t \mathcal{A}_{\mu, \delta}(t)} (
                F(t)
                - F(s)
            )\\
            & + (
                e^{- t \mathcal{A}_{\mu, \delta}(t)}
                - e^{- t \mathcal{A}_{\mu, \delta}(s)}
            ) F(s)\\
            & + (
                e^{- (t - s) \mathcal{A}_{\mu, \delta}(s)}
                - I
            ) \mathcal{A}_{\mu, \delta}(s)^{-\sigma} \mathcal{A}_{\mu, \delta}(s)^\sigma e^{- s \mathcal{A}_{\mu, \delta}(s)} F(s),
        \end{align*}
        We deduce that
        \begin{align*}
            \Vert
                J_3(t) - J_3(s)
            \Vert_{L^2(\Omega)}
            & \leq C (t - s)^\sigma s^{-1 + \beta - \sigma} e^{- \nu s} \Vert
                F
            \Vert_{\mathcal{F}^{\nu, \sigma, \beta}},
        \end{align*}
        where $C = O(\Vert v \Vert_{L^\infty(0,T; H^2(\Omega)^2)})$.
        Therefore we conclude that
        \begin{align*}
            \Vert
                \mathcal{A}_{\mu, \delta}(t) V(t)
            \Vert_{\mathcal{F}^{\eta, \sigma, \beta}}
            \leq C \left(
                    \Vert
                        \mathcal{A}_{\mu, \delta}(0)^\beta u_0
                    \Vert_{L^2(\Omega)^2}
                    + \Vert
                        f
                    \Vert_{\mathcal{F}^{\nu, \beta, \sigma}(0,T; L^2(\Omega)^2)}
            \right),
        \end{align*}
        for $C = O(\Vert v \Vert_{L^\infty(0,T; H^2(\Omega)^2)}^2)$.
        Since $V(t)$ solves $dV(t)/dt + \mathcal{A}_{\mu, \delta}(t)V(t) = F(t)$ for $t>0$ in $L^2_{\overline{\sigma}}(\Omega)$, we obtain the estimate (\ref{eq_maximal_regularity_estimate_in_F_nu_beta_sigma}) for $dV(t)/dt$.
    \end{proof}
    In the complete same way we deduce
    \begin{lemma} \label{lem_maximal_regularity_estimate_in_F_beta_sigma}
        Let $\sigma < \theta$, and $\beta > \sigma$.
        Set
        \begin{align*}
            V_0
            \in D(\mathcal{A}_{\mu, \delta}(0)^\beta), \quad
            F
            \in \mathcal{F}^{\sigma, \beta}(0, T; L^2_{\overline{\sigma}}(\Omega)).
        \end{align*}
        Let
        \begin{align*}
            V(t)
            = T_{\mu, \delta}(t, 0) V_0
            + \int_0^t
                T_{\mu, \delta}(t,\tau) F(\tau)
            d\tau.
        \end{align*}
        Then
        \begin{align}
            \begin{split}
                & \sup_{0 < t < T} \Vert
                    \mathcal{A}_{\mu, \delta}(t)^\beta V(t)
                \Vert_{L^2(\Omega)^2}\\
                & + \left \Vert
                    \frac{dV}{dt}
                \right \Vert_{\mathcal{F}^{\beta, \sigma}(0,T; L^2(\Omega)^2)}
                + \Vert
                    \mathcal{A}_{\mu, \delta}(t) V
                \Vert_{\mathcal{F}^{\beta, \sigma}(0,T; L^2(\Omega)^2)}\\
                & \leq C_T \left(
                    \Vert
                        \mathcal{A}_{\mu, \delta}(0)^\beta V_0
                    \Vert_{L^2(\Omega)^2}
                    + \Vert
                        F
                    \Vert_{\mathcal{F}^{\beta, \sigma}(0,T; L^2(\Omega)^2)}
                \right).
            \end{split}
        \end{align}
        for $C = O(\Vert v \Vert_{L^\infty(0,T; H^2(\Omega)^2)}^2)$.
    \end{lemma}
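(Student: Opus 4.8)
The plan is to repeat the proof of Lemma~\ref{lem_maximal_regularity_estimate_in_F_nu_beta_sigma} essentially verbatim, simply deleting the exponential weights $e^{-\eta\tau}$, $e^{-\nu\tau}$, $e^{-(\nu-\varepsilon)\tau}$ everywhere; the one structural difference is that the resulting constant is now permitted to depend on $T$, since without the decay factor the singular time integrals that were previously bounded uniformly in $t$ are only bounded on the finite interval $(0,T)$. First I would insert the representation $V(t)=T_{\mu,\delta}(t,0)V_0+\int_0^t T_{\mu,\delta}(t,\tau)F(\tau)\,d\tau$, apply $\mathcal{A}_{\mu,\delta}(t)^\beta$, and use the decomposition (\ref{eq_split_At_ut}) into the initial-data term $\mathcal{A}_{\mu,\delta}(t)^\beta\mathcal{A}_{\mu,\delta}(0)^{-\beta}\,\mathcal{A}_{\mu,\delta}(0)^\beta V_0$, the commutator integral $\int_0^t\mathcal{A}_{\mu,\delta}(t)^\beta T_{\mu,\delta}(t,\tau)(F(\tau)-F(t))\,d\tau$, the term $\int_0^t\mathcal{A}_{\mu,\delta}(t)^\beta\bigl(T_{\mu,\delta}(t,\tau)-e^{-(t-\tau)\mathcal{A}_{\mu,\delta}(t)}\bigr)\,d\tau\,F(t)$, and $\bigl(I-e^{-t\mathcal{A}_{\mu,\delta}(t)}\bigr)\mathcal{A}_{\mu,\delta}(t)^{-1+\beta}F(t)$. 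Proposition~\ref{prop_estimate_for_At_phi_As_phi_inverse} handles the first; for the second I would apply Corollary~\ref{eq_estimate_At_phi_Tts_At} together with the bound $\int_0^t(t-\tau)^{-\beta+\sigma}\tau^{-1+\beta-\sigma}\,d\tau\le C$, which is finite precisely because $\sigma<\beta\le 1$; the remaining two terms are bounded by $C\,t^{1-\beta}\Vert F\Vert_{\mathcal{F}^{\beta,\sigma}}\le C_T\Vert F\Vert_{\mathcal{F}^{\beta,\sigma}}$ via Proposition~\ref{eq_estimate_At_phi_Tts_minus_e_minus_t_s_At} and the uniform boundedness of $(I-e^{-t\mathcal{A}_{\mu,\delta}(t)})\mathcal{A}_{\mu,\delta}(t)^{-1+\beta}$. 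This yields the bound on $\sup_{0<t<T}\Vert\mathcal{A}_{\mu,\delta}(t)^\beta V(t)\Vert_{L^2}$.

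Next I would estimate the $\sigma$-H\"{o}lder seminorm of $t\mapsto\mathcal{A}_{\mu,\delta}(t)V(t)$ along exactly the same lines. For the initial-data contribution I would split $\mathcal{A}_{\mu,\delta}(t)T_{\mu,\delta}(t,0)V_0-\mathcal{A}_{\mu,\delta}(s)T_{\mu,\delta}(s,0)V_0$ into the pieces $I_1(t,s)$ and $I_2(t,s)$ as in that proof and bound them using Propositions~\ref{prop_estimate_for_At_phi_Tts_As_phi_phi_inverse} and~\ref{prop_At_phi_Tts_As_minus_phi_minus_e_t_s_As}, producing $(t-s)^\sigma s^{-(1-\beta+\sigma)}\Vert\mathcal{A}_{\mu,\delta}(0)^\beta V_0\Vert_{L^2}$, which is exactly what the weight $s^{1-\beta+\sigma}$ in the $\mathcal{F}^{\beta,\sigma}$-norm absorbs. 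For the forcing contribution I would keep the decomposition $J_1+J_2+J_3$ and, for the differences, the sub-splittings $J_{11},J_{12}$ of $J_1$ and $J_{21},J_{22}$ of $J_2$, invoking Propositions~\ref{eq_estimate_At_phi_Tts_minus_e_minus_t_s_At}, \ref{prop_At_phi_Tts_As_minus_phi_minus_e_t_s_As}, \ref{prop_semigroup_minus_I_int_At_Tts_fs_minus_ftau}, \ref{prop_estimate_int_t_tildeWttau_minus_Tts_tildeWstau}, and~\ref{prop_continuity_At_phi_e_tau_At}. In every instance the factors $e^{-\nu\tau}$ or $e^{-(\nu-\varepsilon)\tau}$ of the weighted proof are simply dropped and the time integrals remain finite on $(0,T)$. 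Finally, since $V$ solves $dV/dt+\mathcal{A}_{\mu,\delta}(t)V=F$ in $L^2_{\overline\sigma}(\Omega)$, the bound on $\Vert\mathcal{A}_{\mu,\delta}(t)V\Vert_{\mathcal{F}^{\beta,\sigma}}$ together with $\Vert F\Vert_{\mathcal{F}^{\beta,\sigma}}$ and the triangle inequality gives the bound for $dV/dt$, completing the proof.

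The main obstacle, such as it is, is purely bookkeeping the $T$-dependence: several integrals that were controlled uniformly in the weighted setting---for instance $\int_0^t(t-\tau)^{-1+\theta}\,d\tau\sim t^\theta$ and $\int_0^t(t-\tau)^{-\beta}\,d\tau\sim t^{1-\beta}$---must now be majorized by powers of $T$. So I would fix $T$ at the outset, replace each decay factor by $1$ in the bounds of Section~\ref{section_linear_evolution_operator}, and check that all constants accumulate into a single $C_T=O(\Vert v\Vert_{L^\infty(0,T;H^2(\Omega)^2)}^2)$, which is all that the subsequent Fujita--Kato fixed-point argument requires.
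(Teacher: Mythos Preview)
Your proposal is correct and matches the paper's approach exactly: the paper simply states ``In the complete same way we deduce'' before this lemma, and afterward remarks that the constant now depends on $T$ (with at most first-order growth), which is precisely the bookkeeping you describe.
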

    Note that the constant in Lemma \ref{lem_maximal_regularity_estimate_in_F_beta_sigma} $C$ depends on $T$.
    The growth order of $T$ is at most first order due to the calculations in the proof of Lemma \ref{lem_maximal_regularity_estimate_in_F_nu_beta_sigma}.

\section{Nonlinear problems} \label{section_nonlinear_problems}
\subsection{Construction of the solution}
Let $t_n$ be the time defined in (\ref{eq_definition_of_t0}) for some large $n$.
At this time we can assume that $\nabla V(t_n)$ is sufficiently small.
For vector fields $V_1$ and $V_2$, we set
\begin{align}
    \begin{split}
        N(V_1, V_2)
        & = 
            (
                V_1(\tau) \cdot \nabla_H V_2(\tau)
                + W_1 (\tau)\partial_3 V_2(\tau)
            )
        \\
    \end{split}
\end{align}
From simple calculations, we can see that
\begin{align} \label{eq_diff_Nv1v1_Nv2v2}
    N(V_1, V_1)
    - N(V_2, V_2)
    = N(\tilde{V}, \overline{V})
    + N(\overline{V}, \tilde{V})
\end{align}
where $\tilde{V} = V_1 - V_2$ and $\overline{V} = (V_1 + V_2)/2$.
We next consider the correspond integral equations to (\ref{eq_diff}) such that
\begin{align} \label{eq_integral_equation_of_V}
    V(t)
    = V(s)
    + \int_s^t
        T_{\mu, \delta}(t, \tau) P (
            N(V)
            + F_\delta(\tau)
        )
    d\tau
    =: M_{\mu, \delta}(s, t; V)
\end{align}
for $0 < s < t$.
We denote the complex interpolation space $X_\alpha \in L^2_{\overline{\sigma}}$ for $\alpha \in [0,1]$ by
\begin{align*}
    X_\alpha
    = [L^2_{\overline{\sigma}}(\Omega), D(A)]_{\alpha}.
\end{align*}
Note that $X_\alpha \hookrightarrow H^{2\alpha}(\Omega)$ and
\begin{align*}
    X_{1/2}
    = \{ v \in H^1_{\overline{\sigma}}(\Omega)
        ;
        \mathrm{div}_H \overline{v} = 0 \text{ in $\Omega$}, \,
        v_{z=-l} \equiv 0
    \},
\end{align*}
see also Section 4 in \cite{HieberKashiwabara2016}.
Hereafter, we consider the solution to equation (\ref{eq_diff}) starting from the initial data $V(t_n)$. 
For notational simplicity, we shift the time variable so that $t_n$ becomes the initial time.
To simplify notation further, we will assume $t_n=0$.
Consequently, due to the definition of $t_n$, the corresponding initial data $V_0$ is assumed to be sufficiently small in $H^1_{\overline{\sigma}}(\Omega)$.

\subsection{The case \texorpdfstring{$F_\delta = 0$}{F\_delta = 0}}
We first consider the case $F_\delta = 0$.
As mentioned before, this simpler case corresponds to a scenario where the exact external force is known without the need for observation.
\begin{lemma}\label{lem_small_data_wellposedness_F_eta_sigma_beta_settings}
    Let $0 < \theta < \alpha$.
    Let $v \in C^{1,\theta}(0, \infty; L^2_{\overline{\sigma}}(\Omega)) \cap C^\theta(0, \infty; H^2(\Omega))$ be the solution to (\ref{eq_primitive}).
    Assume that $\mu>0$, $\delta>0$, and $v$ satisfy (\ref{eq_condition_mu_and_delta_and_norm_of_v_1}) and (\ref{eq_condition_mu_and_delta_and_norm_of_v_2}).
    Then, there exists $\varepsilon_0$ such that if
    \begin{align*}            
        \Vert
            V_0
        \Vert_{X_{1/2}}
        \leq \varepsilon_0,
    \end{align*}
    there exists a unique solution $V \in \mathcal{F}^{\eta, \sigma, \beta}(0,\infty; L^2_{\overline{\sigma}}(\Omega))$ to (\ref{eq_integral_equation_of_V}) such that
    \begin{align*}
        \Vert
            V
        \Vert_{\mathcal{F}^{\mu_\ast/2, \sigma, \beta}(0,\infty; L^2_{\overline{\sigma}}(\Omega))}
        \leq C \Vert
            V_0
        \Vert_{X_{1/2}}
    \end{align*}
    for some constant $C>0$.
\end{lemma}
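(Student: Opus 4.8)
\emph{Plan.} The plan is to solve the equation (\ref{eq_integral_equation_of_V}) with $s=0$ and $F_\delta=0$ by a Fujita--Kato contraction argument built on the linear maximal H\"older regularity of Lemma \ref{lem_maximal_regularity_estimate_in_F_nu_beta_sigma}. I take $\beta=1/2$, so that the data class $D(\mathcal A_{\mu,\delta}(0)^\beta)=X_{1/2}$ is exactly the one in the hypothesis, and I fix $0<\sigma<\min(\beta,\theta)$ together with exponents $\mu_\ast/4<\eta<\nu\le\mu_\ast/2$ chosen so that $2\eta\ge\nu$ (possible because $N$ is quadratic; this is what lets the nonlinear forcing be re-inserted into the linear estimate). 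Let $\mathcal Y_\eta$ be the space of $V$ for which the three quantities on the left-hand side of (\ref{eq_maximal_regularity_estimate_in_F_nu_beta_sigma}) are finite, with norm $\Vert\cdot\Vert_{\mathcal Y_\eta}$, and let $\mathcal B_R\subset\mathcal Y_\eta$ be the closed ball of radius $R$; it is a complete metric space. The structural observation I will use is that interpolating between $\sup_t\Vert\mathcal A_{\mu,\delta}(t)^{1/2}V(t)\Vert_{L^2(\Omega)}$ and $\Vert\mathcal A_{\mu,\delta}(\cdot)V\Vert_{\mathcal F^{\eta,\sigma,\beta}}$, together with $X_{3/4}\hookrightarrow H^{3/2}(\Omega)$, controls $\Vert V(t)\Vert_{H^{3/2}(\Omega)}$ by $C\,t^{-1/4}e^{-\eta t}\Vert V\Vert_{\mathcal Y_\eta}$, which is exactly the regularity that makes $N(V)=V\cdot\nabla_H V+W\partial_3 V$ take values in $L^2_{\overline{\sigma}}(\Omega)$.

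\emph{Invariance and contraction.} For $V\in\mathcal B_R$, Lemma \ref{lem_maximal_regularity_estimate_in_F_nu_beta_sigma} applied to $M_{\mu,\delta}(0,\cdot;V)$ with data $V_0$ and forcing $PN(V)$ gives
\begin{align*}
    \Vert M_{\mu,\delta}(0,\cdot;V)\Vert_{\mathcal Y_\eta}
    \le C\Vert V_0\Vert_{X_{1/2}}
    + C\Vert PN(V)\Vert_{\mathcal F^{\nu,\sigma,\beta}(0,\infty;L^2_{\overline{\sigma}}(\Omega))}.
\end{align*}
The heart of the matter is then the quadratic bound $\Vert PN(V)\Vert_{\mathcal F^{\nu,\sigma,\beta}}\le C\Vert V\Vert_{\mathcal Y_\eta}^2$. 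For the sup-weighted part I would use Proposition \ref{prop_estimate_for_convection_term} (equivalently Proposition \ref{prop_trigonal_estimate} with suitable exponents) to bound $\Vert N(V)(t)\Vert_{L^2(\Omega)}\le C\Vert V(t)\Vert_{H^{3/2}(\Omega)}^2$ and then the $H^{3/2}$-estimate above, the condition $2\eta\ge\nu$ making the exponential weight close. For the temporal H\"older seminorm I write the difference bilinearly, $N(V)(t)-N(V)(s)=N(V(t)-V(s),\frac{V(t)+V(s)}{2})+N(\frac{V(t)+V(s)}{2},V(t)-V(s))$, put the time increment on one argument (controlled by the $\mathcal F^{\eta,\sigma,\beta}$-seminorm of $\mathcal A_{\mu,\delta}(\cdot)V$ after interpolation, using the $t$-independence of $D(A)$, which reproduces the seminorm weight $s^{1-\beta+\sigma}$) and bound the other in $H^{3/2}$; all resulting time integrals are finite Beta-type integrals because $\beta=1/2$ and $\sigma<\theta$. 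Hence $\Vert M_{\mu,\delta}(0,\cdot;V)\Vert_{\mathcal Y_\eta}\le C\Vert V_0\Vert_{X_{1/2}}+CR^2$, so with $R=2C\Vert V_0\Vert_{X_{1/2}}$ the ball $\mathcal B_R$ is invariant provided $\Vert V_0\Vert_{X_{1/2}}\le\varepsilon_0$ with $\varepsilon_0$ small. Running the same estimates on the difference and using (\ref{eq_diff_Nv1v1_Nv2v2}), i.e. $N(V_1,V_1)-N(V_2,V_2)=N(\tilde V,\overline V)+N(\overline V,\tilde V)$ with $\tilde V=V_1-V_2$ and $\overline V=(V_1+V_2)/2$, yields $\Vert M_{\mu,\delta}(0,\cdot;V_1)-M_{\mu,\delta}(0,\cdot;V_2)\Vert_{\mathcal Y_\eta}\le CR\,\Vert V_1-V_2\Vert_{\mathcal Y_\eta}$, a contraction for $R$ (hence $\varepsilon_0$) small.

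\emph{Conclusion and upgrade of the decay rate.} The Banach fixed point theorem then produces a unique solution $V\in\mathcal B_R$ of (\ref{eq_integral_equation_of_V}); differentiating the Duhamel formula and using that $V$ solves $dV/dt+\mathcal A_{\mu,\delta}(t)V=PN(V)$ in $L^2_{\overline{\sigma}}(\Omega)$ gives $V\in C^1(0,\infty;L^2(\Omega)^2)\cap C(0,\infty;H^2(\Omega)^2)$. Finally, once $V\in\mathcal B_R$ is known, its nonlinearity $N(V)$ decays at rate $2\eta>\mu_\ast/2$; feeding this back into the Duhamel formula and using the rate-$\mu_\ast/2$ bounds $\Vert\mathcal A_{\mu,\delta}(t)^\phi T_{\mu,\delta}(t,s)\Vert\le C(t-s)^{-\phi}e^{-\mu_\ast(t-s)/2}$ from Proposition \ref{prop_estimate_for_At_phi_Tts} and Corollary \ref{eq_estimate_At_phi_Tts_At} upgrades the decay of $V$ to the full rate, which produces $\Vert V\Vert_{\mathcal F^{\mu_\ast/2,\sigma,\beta}(0,\infty;L^2_{\overline{\sigma}}(\Omega))}\le C\Vert V_0\Vert_{X_{1/2}}$, as claimed.

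\emph{Main obstacle.} The delicate step is the quadratic bound for $N(V)$ in the exponentially weighted temporal H\"older space. Since $W=-\int_{-l}^{x_3}\mathrm{div}_H V\,dz$ is nonlocal, an ordinary Sobolev product rule does not suffice and one must use the anisotropic estimates of Proposition \ref{prop_estimate_for_convection_term}/\ref{prop_trigonal_estimate}; one then has to track several competing singular time weights simultaneously — the $t^{-1/4}$ from interpolating up to $H^{3/2}$, the $(t-\tau)^{-\beta}$ and $\tau^{-1+\beta-\sigma}$ appearing in the Duhamel integrals, and the seminorm weight $s^{1-\beta+\sigma}$ — and check that every integral converges, which is precisely where the choices $\beta=1/2$, $\sigma<\theta<1$ and $2\eta\ge\nu$ are used. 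Everything else is a routine Fujita--Kato iteration.
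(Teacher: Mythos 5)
Your proposal is correct and follows essentially the same route as the paper: a Fujita--Kato contraction for $M_{\mu,\delta}(0,\cdot;V)$ in the exponentially weighted H\"older class with $\beta=1/2$, using Lemma \ref{lem_maximal_regularity_estimate_in_F_nu_beta_sigma} for the linear part, Proposition \ref{prop_estimate_for_convection_term} together with the interpolation $\Vert\varphi\Vert_{X_{3/4}}\le\Vert\varphi\Vert_{X_{1/2}}^{1/2}\Vert\varphi\Vert_{D(A)}^{1/2}$ for the quadratic bound on $N$, and the splitting (\ref{eq_diff_Nv1v1_Nv2v2}) for the H\"older seminorm and the contraction estimate. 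The only deviation is your explicit two-step handling of the exponential rate (contracting at $\eta<\nu\le\mu_\ast/2$ with $2\eta\ge\nu$ and then bootstrapping through Duhamel), which is a legitimate, slightly more careful bookkeeping of the loss $\eta<\nu$ in Lemma \ref{lem_maximal_regularity_estimate_in_F_nu_beta_sigma} that the paper absorbs by working directly at rate $\mu_\ast/2$ (consistent with the final rate $\tilde\mu\ge\mu/4$ in Theorem \ref{thm_main_thoerem1}).
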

\begin{proof}
    By the assumptions on $v, \mu, \delta$, we have evolution operator $T_{\mu, \delta} (t,s)$ satisfying (\ref{eq_estimate_for_T_t_s_by_interpolation}).
    The proof is similar to that of Proposition 5.2 in \cite{HieberKashiwabara2016}.
    We prove the existence of the solution using the Fujita-Kato method.
    Let $R>0$ be sufficiently small, to be determined later.
    Assume that $V$ lies in a small ball $B_{\mathcal{F}^{\mu_\ast/2, \sigma, \beta}(0,\infty; L^2_{\overline{\sigma}}(\Omega))}(R)$.
    We find from Proposition \ref{prop_estimate_for_convection_term} and the interpolation inequality
    \begin{align*}
        \Vert
            \varphi
        \Vert_{X_{3/4}}
        \leq \Vert
            \varphi
        \Vert_{X_{1/2}}^{1/2}
        \Vert
            \varphi
        \Vert_{D(A)}^{1/2}
    \end{align*}
    that
    \begin{equation} \label{eq_estimate_for_N_0_t_in_X_12}
        \begin{split}
            & e^{\mu_\ast s} s^{1/2} \Vert
                N(V(s), V(s))
            \Vert_{L^2(\Omega)^2} \\
            & \leq C e^{\mu_\ast s} s^{1/2} s^{1/2} \left(
                \Vert
                    V (s) \cdot \nabla_H V (s)
                \Vert_{L^2(\Omega)}
                + \Vert
                    W (s) \partial_3 V (s)
                \Vert_{L^2(\Omega)}
            \right)\\
            & \leq C e^{\mu_\ast s} s^{1/2} \Vert
                V (s)
            \Vert_{X_{3/4}}^2\\
            & \leq C e^{\mu_\ast s} s^{1/2} \Vert
                V (s)
            \Vert_{X_{1/2}}
            \Vert
                V (s)
            \Vert_{D(\mathcal{A}_{\mu, \delta}(s))}\\
            & \leq C \Vert
                V (s)
            \Vert_{\mathcal{F}^{\mu_\ast/2, \sigma, 1/2}(0,\infty; L^2_{\overline{\sigma}}(\Omega))}^2.
        \end{split}
    \end{equation}
    Similar to this, using (\ref{eq_diff_Nv1v1_Nv2v2}), we find that
    \begin{align*}
        & e^{\mu_\ast s} s^{1/2 - \sigma}
        \Vert
            N(V(t), V(t))
            - N(V(s), V(s))
        \Vert_{L^2(\Omega)^2} \\
        & \leq C \vert
            t
            - s
        \vert^\sigma
        \Vert
            V
        \Vert_{\mathcal{F}^{\mu_\ast/2, \sigma, 1/2}(0,\infty; L^2_{\overline{\sigma}}(\Omega))}^2.
    \end{align*}
    Applying Lemma \ref{lem_maximal_regularity_estimate_in_F_nu_beta_sigma}, we obtain the quadratic inequality
    \begin{align} \label{eq_estimate_for_the_RHS_of_integral_equation_of_eq_diff}
        \begin{split}
            & \left \Vert
                \frac{d}{dt} M_{\mu, \delta}(0, \cdot; V)
            \right \Vert_{\mathcal{F}^{\mu_\ast/2, \sigma, 1/2}(0,\infty; L^2_{\overline{\sigma}}(\Omega))} \\
            & + \Vert
                \mathcal{A}_{\mu, \delta} (t)M_{\mu, \delta}(0, \cdot; V)
            \Vert_{\mathcal{F}^{\mu_\ast/2, \sigma, 1/2}(0,\infty; L^2_{\overline{\sigma}}(\Omega))} \\
            & \leq C_0 \Vert
                V_0
            \Vert_{X_{1/2}}
            + C_1 \Vert
                V
            \Vert_{\mathcal{F}^{\mu_\ast/2, \sigma, 1/2}(0,\infty; L^2_{\overline{\sigma}}(\Omega))}^2.
        \end{split}
    \end{align}
    The quadratic estimate implies that, if we take $\Vert V_0 \Vert_{X_{1/2}}$ so small that
    \begin{align*}
        r
        & := 4 C_0 C_1 \Vert
            V_0
        \Vert_{X_{1/2}}
        \leq \frac{1}{2},
    \end{align*}
    then we have
    \begin{align} \label{eq_bound_for_size_of_V_for_fujita_kato_method}
        \begin{split}
            & \sup_{0 < t < \infty} \Vert
                \mathcal{A}_{\mu, \delta}(t)^{1/2} M_{\mu, \delta}(0, \cdot; V)
            \Vert_{L^2(\Omega)^2}\\
            & + \left \Vert
                \frac{d}{dt} M_{\mu, \delta}(0, \cdot; V)
            \right \Vert_{\mathcal{F}^{\mu_\ast/2, \sigma, 1/2}(0,\infty; L^2_{\overline{\sigma}}(\Omega))} \\
            & + \Vert
                \mathcal{A}_{\mu, \delta} (t)M_{\mu, \delta}(0, \cdot; V)
            \Vert_{\mathcal{F}^{\mu_\ast/2, \sigma, 1/2}(0,\infty; L^2_{\overline{\sigma}}(\Omega))} \\
            & \leq \frac{
                1 - \sqrt{
                    1 - r
                }
            }{2 C_1}\\
            & =: R
            \leq 2 C_0 \Vert
                V_0
            \Vert_{X_{1/2}}.
        \end{split}
    \end{align}
    Let $V_1, V_2 \in B_{\mathcal{F}^{\mu_\ast/2, \sigma, \beta}(0,\infty; L^2_{\overline{\sigma}}(\Omega))}(R)$ for some small $R>0$.
    We see from the identity (\ref{eq_diff_Nv1v1_Nv2v2}) and Proposition \ref{prop_estimate_for_convection_term} that
    \begin{align} \label{eq_Mmudelta_is_self_mapping}
        \begin{split}
            & e^{\mu_\ast s} s^{1/2} \Vert
                N(V_1(s), V_1(s))
                - N(V_2(s), V_2(s))
            \Vert_{L^2_{\overline{\sigma}}(\Omega)}\\
            & \leq C \Vert
                V_1
                - V_2
            \Vert_{\mathcal{F}^{\mu_\ast/2, \sigma, 1/2}(0,\infty; L^2_{\overline{\sigma}}(\Omega))}
            \sum_{j=1,2}
                \Vert
                    V_j
                \Vert_{\mathcal{F}^{\mu_\ast/2, \sigma, 1/2}(0,\infty; L^2_{\overline{\sigma}}(\Omega))}.
        \end{split}
    \end{align}
    We observe from (\ref{eq_diff_Nv1v1_Nv2v2}) that
    \begin{align*}
        & \bigl (
                N(V_1(t), V_1(t))
                - N(V_2(t), V_2(t))
            \bigr )\\
        & \quad\quad\quad
            - \bigl (
                N(V_1(s), V_1(s))
                - N(V_2(s), V_1(s))
            \bigr )\\
        & = N(\tilde{V}(t), \overline{V}(t))
        + N(\overline{V}(t), \tilde{V}(t))
        - N(\tilde{V}(s), \overline{V}(s))
        - N(\overline{V}(s), \tilde{V}(s))\\
        & = N(\tilde{V}(t) - \tilde{V}(s), \overline{V}(t))
        + N(\tilde{V}(s), \overline{V}(t) - \overline{V}(s))\\
        & + N(\overline{V}(t) - \overline{V}(s), \tilde{V}(t))
        + N(\overline{V}(s), \tilde{V}(t) - \tilde{V}(s)).
    \end{align*}
    Therefore we find that
    \begin{align*} 
        \begin{split}
            & e^{\mu_\ast s} s^{1/2 - \sigma} \Vert
                N(V_1(t))
                - N(V_2(t))
                - (
                    N(V_1(s))
                    - N(V_2(s))
                )
            \Vert_{L^2_{\overline{\sigma}}(\Omega)}\\
            & \leq C \vert
                t - s
            \vert^\sigma \Vert
                V_1
                - V_2
            \Vert_{\mathcal{F}^{\mu_\ast/2, \sigma, 1/2}(0,\infty; L^2_{\overline{\sigma}}(\Omega))}\\
            & \quad\quad\quad
            \times \sum_{j=1,2}
                \Vert
                    V_j
                \Vert_{\mathcal{F}^{\mu_\ast/2, \sigma, 1/2}(0,\infty; L^2_{\overline{\sigma}}(\Omega))}.
        \end{split}
    \end{align*}
    We obtain
    \begin{align} \label{eq_Mmudelta_is_contraction_mapping}
        \begin{split}
            & \sup_{0 < t < \infty} \Vert
                \mathcal{A}_{\mu, \delta}(t)^{1/2} (
                    M_{\mu, \delta}(0, \cdot; V_1)
                    - M_{\mu, \delta}(0, \cdot; V_2)
                )
            \Vert_{L^2(\Omega)^2}\\
            & + \left \Vert
                \frac{d}{dt} (
                    M_{\mu, \delta}(0, \cdot; V_1)
                    - M_{\mu, \delta}(0, \cdot; V_2)
                )
            \right \Vert_{\mathcal{F}^{\mu_\ast/2, \sigma, 1/2}(0,\infty; L^2_{\overline{\sigma}}(\Omega))}\\
            & + \Vert
                \mathcal{A}_{\mu, \delta}(
                    M_{\mu, \delta}(0, \cdot; V_1)
                    - M_{\mu, \delta}(0, \cdot; V_2)
                )
            \Vert_{\mathcal{F}^{\mu_\ast/2, \sigma, 1/2}(0,\infty; L^2_{\overline{\sigma}}(\Omega))}\\
            & \leq C_3 \Vert
                V_1
                - V_2
            \Vert_{\mathcal{F}^{\mu_\ast/2, \sigma, 1/2}(0,\infty; L^2_{\overline{\sigma}}(\Omega))}\\
            & \quad\quad\quad
            \times \sum_{j=1,2}
            \Vert
                V_j
            \Vert_{\mathcal{F}^{\mu_\ast/2, \sigma, 1/2}(0,\infty; L^2_{\overline{\sigma}}(\Omega))}.
        \end{split}
    \end{align}
    If $R$ is so small that
    \begin{align*}
        C_3 \sum_{j=1,2}
        \Vert
            V_j
        \Vert_{\mathcal{F}^{\mu_\ast/2, \sigma, 1/2}(0,\infty; L^2_{\overline{\sigma}}(\Omega))}
        \leq 2R
        < 1,
    \end{align*}
    the solution map
    \begin{align*}
        V
        \longmapsto V_0
        + M_{\mu, \delta}(0, \cdot; V)
    \end{align*}
    is a contractive in $\mathcal{F}^{\mu_\ast/2, \sigma, 1/2}(0,\infty; L^2_{\overline{\sigma}}(\Omega))$ for each small $V_0 \in X_{1/2}$.
    Therefore, Banach's fixed point method implies that there exist a unique solution to 
    \begin{align*}
        V(t)
        = V_0
        + M_{\mu, \delta}(0,t; V)
    \end{align*}
    such that
    \begin{align*}
        \Vert
            V
        \Vert_{\mathcal{F}^{\mu_\ast/2, \sigma, 1/2}(0,\infty; L^2_{\overline{\sigma}}(\Omega))}
        \leq R
    \end{align*}
    for all $t>0$.
\end{proof}

\begin{proof}[Proof of Theorem \ref{thm_main_thoerem1}.]
    Lemma \ref{lem_small_data_wellposedness_F_eta_sigma_beta_settings} implies (\ref{eq_decay_estimate_in_main_theorem}).
\end{proof}

\subsection{The case \texorpdfstring{$F_\delta$}{F\_delta} is small}
We assume that $F_\delta = f - J_\delta f$ and then
\begin{align*}
    \Vert
        F_\delta (t)
    \Vert
    \leq C \delta \sup_{t>0} \Vert
        \nabla f(t)
    \Vert_{L^2(\Omega)^2}
\end{align*}
for some constant $C>0$ and all $t>0$.
We take $T = 2$.
Since the initial data and external force can be taken small, we can construct the solution up to $T=2$.
Actually, we can employ the similar way as in (\ref{eq_Mmudelta_is_self_mapping}) and Lemma \ref{lem_maximal_regularity_estimate_in_F_beta_sigma} to see that
\begin{align} \label{eq_estimate_for_M_mu_delta_in_F_sigma_half}
    \begin{split}
        & \sup_{0 < t < \infty} \Vert
            \mathcal{A}_{\mu, \delta}(t)^{1/2} M_{\mu, \delta}(0, \infty; V)
        \Vert_{L^2(\Omega)^2}\\
        & + \left \Vert
            \frac{d}{dt} M_{\mu, \delta}(0, \cdot; V)
        \right \Vert_{\mathcal{F}^{\sigma, 1/2}(0, T; L^2_{\overline{\sigma}}(\Omega))} \\
        & + \Vert
            \mathcal{A}_{\mu, \delta} (t)M_{\mu, \delta}(0, \cdot; V)
        \Vert_{\mathcal{F}^{\sigma, 1/2}(0, T; L^2_{\overline{\sigma}}(\Omega))}\\
        & \leq \frac{
            1 - \sqrt{
                1 - r
            }
        }{2 C_1}
        =: R,
    \end{split}
\end{align}
for all $T>0$ and
\begin{align*}
    r
    = 4 C_0 C_1 (
        \Vert
            V_0
        \Vert_{X_{1/2}}
        + \delta \Vert
            \nabla f
        \Vert_{\mathcal{F}^{\sigma, 1/2}(0, T; L^2_{\overline{\sigma}}(\Omega))}
    ).
\end{align*}
Therefore combining with (\ref{eq_Mmudelta_is_contraction_mapping}), we find the solution $V \in \mathcal{F}^{\sigma, 1/2}(0, T; L^2_{\overline{\sigma}}(\Omega))$ to (\ref{eq_diff}) associated with small initial data $V_0$.
Let $N_0$ be sufficiently large.
In view of the estimate (\ref{eq_definition_of_t0}), we find a sequence $N_0 < t_0 < t_1 < \cdots$ such that
\begin{align*}
    \vert
        t_{j+1}
        - t_j
    \vert < 2, \quad
    j \in \Integer_{\geq 0}
\end{align*}
and (\ref{eq_definition_of_t0}) is satisfied.
Therefore repeating (\ref{eq_estimate_for_M_mu_delta_in_F_sigma_half}) in the time interval $(t_j, t_j + 1)$, we obtain
\begin{align} 
    \begin{split}
        & \sup_{0 < t < \infty} \Vert
            \mathcal{A}_{\mu, \delta}(t)^{1/2} M_{\mu, \delta}(0, \infty; V)
        \Vert_{L^2(\Omega)^2}\\
        & + \left \Vert
            \frac{d}{dt} M_{\mu, \delta}(0, \infty; V)
        \right \Vert_{\mathcal{F}^{\sigma, 1/2}(0, \infty; L^2_{\overline{\sigma}}(\Omega))} \\
        & + \Vert
            \mathcal{A}_{\mu, \delta} (t)M_{\mu, \delta}(0, \cdot; V)
        \Vert_{\mathcal{F}^{\sigma, 1/2}(0, \infty; L^2_{\overline{\sigma}}(\Omega))}\\
        & \leq R\\
        & \leq 2 C_0 \left(
            \varepsilon
            + \frac{C \delta}{\mu^{1/2}} \sup_{s>0}
            \Vert
                \nabla f (s)
            \Vert_{L^2(\Omega)}
            + \delta \Vert
                \nabla f (s)
            \Vert_{L^2(\Omega)}
        \right).
    \end{split}
\end{align}
In the last inequality we used (\ref{eq_definition_of_t0}). 
We have proved Theorem \ref{thm_main_thoerem2}.

\section*{Acknowledgments}
The author would like to thank the members of RIKEN Pioneering Project “Prediction for Science” for helpful discussions and comments for this research.
The author was partly supported by RIKEN Pioneering Project ``Prediction for Science'' and JSPS Grant-in-Aid for Young Scientists (No. 22K13948).

\end{document}